\crefname{assumption}{Assumption}{Assumptions}
\crefname{figure}{Figure}{Figures}
\theoremstyle{plain}
\newtheorem{theorem}{Theorem}[section]
\newtheorem{lemma}[theorem]{Lemma}
\newtheorem{proposition}[theorem]{Proposition}
\numberwithin{equation}{section}
\theoremstyle{definition}
\theoremstyle{remark}
\newtheorem{remark}[theorem]{Remark}
\newtheorem{assumption}[theorem]{Assumption}
\newtheorem{example}[theorem]{Example}
\setlist[itemize]{leftmargin=.5in}
\setlist[enumerate]{leftmargin=.5in,topsep=3pt,itemsep=3pt,label=(\roman*)}
\newcommand{\email}[1]{\href{#1}{#1}}
\newcommand{\TheTitle}{Filtered data based estimators for stochastic processes \\ driven by colored noise}
\newcommand{\TheAuthors}{G. A. Pavliotis, S. Reich, A. Zanoni}
\title{\TheTitle}
\author{Grigorios A. Pavliotis \thanks{Department of Mathematics, Imperial College London, London, UK, \email{g.pavliotis@imperial.ac.uk}.}
\and Sebastian Reich \thanks{Department of Mathematics, University of Potsdam, Potsdam, Germany, \email{sebastian.reich@uni-potsdam.de}.}
\and Andrea Zanoni \thanks{Institute for Computational and Mathematical Engineering, Stanford University, Stanford, CA, USA, \email{andrea.zanoni@stanford.edu}.}
}
\date{}
\newcommand{\abs}[1]{\left\lvert#1\right\rvert}
\newcommand{\norm}[1]{\left\|#1\right\|}
\renewcommand{\Pr}{\mathbb{P}}
\newcommand{\N}{\mathbb{N}}
\newcommand{\R}{\mathbb{R}}
\newcommand{\T}{\mathbb{T}}
\newcommand{\epl}{\varepsilon}
\newcommand{\eqdef}{\eqqcolon}
\newcommand{\E}{\operatorname{\mathbb{E}}}
\renewcommand{\d}{\mathrm{d}}
\newcommand{\dd}{\,\mathrm{d}}
\definecolor{shade}{RGB}{100, 100, 100}
\definecolor{bordeaux}{RGB}{128, 0, 50}
\renewcommand*{\dot}[1]{\accentset{\mbox{\large\bfseries .}}{#1}}
\definecolor{leg1}{RGB}{0,114,189}
\definecolor{leg2}{RGB}{217,83,25}
\definecolor{leg3}{RGB}{237,177,32}
\definecolor{leg4}{RGB}{126,47,142}
\definecolor{leg5}{RGB}{119,172,48}
\definecolor{leg21}{RGB}{62,38,169}
\definecolor{leg22}{RGB}{46,135,247}
\definecolor{leg23}{RGB}{55,200,151}
\definecolor{leg24}{RGB}{254,195,56}
\begin{document}
	
\maketitle	

\begin{abstract} 
We consider the problem of estimating unknown parameters in stochastic differential equations driven by colored noise, which we model as a sequence of Gaussian stationary processes with decreasing correlation time. We aim to infer parameters in the limit equation, driven by white noise, given observations of the colored noise dynamics. We consider both the maximum likelihood and the stochastic gradient descent in continuous time estimators, and we propose to modify them by including filtered data. We provide a convergence analysis for our estimators showing their asymptotic unbiasedness in a general setting and asymptotic normality under a simplified scenario. 
\end{abstract}
 
\textbf{AMS subject classifications.} 60H10, 60J60, 62F12, 62M05, 62M20.

\textbf{Keywords.} Diffusion processes, colored noise, filtered data, Lévy area correction, maximum likelihood estimator, stochastic gradient descent in continuous time.

\section{Introduction}

Estimating parameters from data in physical models is important in many applications. In recent years, model calibration has become an essential aspect of the overall mathematical modelling strategy~\cite{BrK22}. Often complex phenomena cannot be described by deterministic equations and some form of randomness needs to be taken into account, either due to model uncertainty/coarse-graining, parametric uncertainty or imprecise measurements. Often, noise in dynamical systems is modelled as white noise, i.e., as a mean-zero Gaussian stationary process that is delta-correlated in time, leading to Itô stochastic differential equations (SDEs). Inferring unknown parameters in diffusion models is therefore an essential problem which has been thoroughly investigated \cite{BaP80,Kut04,Bis08}. There are many applications, however, where modeling noise as an uncorrelated-in-time process is not accurate and where non-trivial (spatio-)temporal correlation structures need to be taken into account, leading to \emph{colored noise}. See., e.g., \cite{HoL84, HaJ94} and the references therein for applications of colored noise to physics, chemistry, and biology. Colored noise is modelled as a mean zero Gaussian stationary process with an exponential autocorrelation function, i.e., a stationary Ornstein-Uhlenbeck process~\cite[Chapter 8]{Pav14}. 

A natural question is whether the solution to an SDE driven by colored noise converges to the solution to the white noise-driven SDE, in the limit as the correlation time of the noise goes to zero. This is certainly the case for SDEs driven by \emph{additive colored noise}. In fact, it can be shown that, under standard dissipativity assumptions on the drift, that convergence is uniform in time, and error estimates can be obtained \cite{BlP78}. In particular, the stability and ergodic properties still hold in a vicinity of the  white noise regime, i.e., for SDEs driven by colored noise with a sufficiently small correlation time. The white noise limit becomes more complicated when the noise is multiplicative, in particular in the multidimensional case. In one dimension, the well known Wong--Zakai theorem~\cite{WoZ65, WoZ69} implies that, in the white noise limit, we obtain the Stratonovich SDE. However, this is not true in general in dimensions higher than one, except for the case of \emph{reducible} SDEs~\cite{Sus78, Sus91}. In general, this limiting procedure introduces an extra drift term, in addition to the stochastic integral, due to properties of the Lévy area of Brownian motion~\cite[Chapter 7]{IkW89}. This additional drift term, the \emph{Lévy area correction}, can have profound impact on qualitative properties of solutions to the SDE~\cite[Section 11.7.7]{PaS08}, \cite{DHP23}. In addition to the rigorous derivation of the Lévy area correction drift term~\cite[Chapter 7]{IkW89}, see also~\cite[Section 3.4]{FrH20} for a derivation using the theory of rough paths. The Lévy area correction drift term can also be derived, for multiplicative SDEs driven by colored noise, using multiscale analysis~\cite{VoW16, BoC13}, \cite[Section 5.1]{Pav14}. We also note that, for SDEs driven by colored multiplicative noise, and in the presence of an additional time scale, due to, e.g., delay effects or inertia, the white noise limit might lead to stochastic integrals that are neither Itô nor Stratonovich~\cite{KPS04, PaS05}. This is a phenomenon that has also been verified experimentally on a noisy electric circuit~\cite{VoW16, PMH13}.

The main goal of this paper is to develop inference methodologies for inferring parameters in the limiting, white noise SDE from data coming from the colored-noise driven equation, in the regime of small noise correlation time. This problem, and the approach adopted in this paper, is similar to the problem of estimating unknown parameters in homogenized models given observations of the slow variable from the corresponding multiscale dynamics \cite{PaS07, PPS09, ABJ13,GaS17}. Both in the problem studied in this paper and in the multiscale dynamics one, standard inference methodologies suffer from the problem of model misspecification: the data, obtained from observations of the full dynamics, is not compatible with the coarse-grained/homogenized/white noise model, but only at appropriate time scales, at which the limiting equation is valid. This leads to a systematic bias in the, e.g., maximum likelihood estimator (MLE). In this paper, in addition to the MLE we also consider stochastic gradient descent in continuous time (SGDCT), which allows for online learning from data. While the former is a well-established method for parameter estimation, the latter has been recently developed as an inference methodology for diffusion processes in \cite{SiS17}, and further analyzed in \cite{SiS20}, where a rate of convergence is obtained and a central limit theorem is proved. The SGDCT estimator was recently applied to McKean (mean field) SDEs in~\cite{SKP23}.  In contrast to stochastic gradient descent in discrete time, which has been studied in detail in, e.g., \cite{BMP90,BeT00,KuY03}, SGDCT consists in solving an SDE for the unknown parameter and thus performing online updates. In particular, it continuously follows a noisy descent direction along the path of the observations, yielding rapid convergence. 

Both the MLE and SGDCT, even if they perform well in problems where one is confronted with single-time-scale data, and for which model and data are compatible at all scales, fail in inferring parameters from observations driven by colored noise, due to systematic bias~\cite{PaS07}. Therefore, it is necessary to preprocess the data. Inspired by \cite{AGP21}, we propose to filter the data through an appropriate exponential kernel, and to then use the filtered data/process in the definition of the estimators. This approach was applied to the MLE for multiscale SDEs in \cite{AGP21}, where it is demonstrated that the filtering methodology outperforms and is more robust than classic subsampling techniques \cite{PaS07}. It was shown in this paper that inserting filtered data in the MLE allows one to correctly estimate the drift coefficient of the homogenized equation, when data are given from the slow variable of the multiscale system. In addition to the MLE, filtered data have then been used in combination with the continuous-time ensemble Kalman–Bucy filter \cite{Rei22}. The same methodology was also successfully applied to the case of discrete-time observations from multiscale dynamics \cite{APZ22}. In particular, based on \cite{KeS99} and the convergence of eigenvalues and eigenfunctions of the generator of multiscale dynamics to the corresponding eigenpairs of the generator of the homogenized process \cite{Zan23}, martingale estimating functions are first constructed and then modified taking into account filtered data. Moreover, a different filtering approach based on moving average is then presented in \cite{GaZ23}, still in the framework of multiscale diffusions.

The main contribution of this article is showing that coupling filtered data with either MLE or SGDCT is beneficial for their effectiveness when applied to stochastic models with colored noise. Our novel estimators based on filtered data are indeed able to learn linear parameters in the drift of the limit equation with white noise from the trajectories of the solution of the SDE driven by colored noise. We first consider the setting of additive noise and prove that both estimators are asymptotically unbiased in the limit of infinite data and when the correlation time of the colored noise vanishes. Our convergence analysis uses ideas from \cite{SiS20} and relies on the results in \cite{AGP21}. Moreover, in the linear one-dimensional setting we study the asymptotic normality of the estimators and derive formally their limit Gaussian distribution. In addition, we show that our methodology is not restricted to the case of additive noise, by analyzing a particular case of multiplicative noise which yields Lévy area correction. Despite the presence of this additional term, we show both theoretically and numerically that our estimators succeed in inferring the drift coefficient of the limit equation. This opens the possibility of extending these approaches to more general settings and more complex stochastic models driven by colored noise.

\paragraph{Outline.}

The rest of the paper is organized as follows. In \cref{sec:setting}, we introduce the general framework of stochastic processes driven by colored noise, and, in \cref{sec:estimation_additive}, we present the filtered data methodology, which is combined with MLE and SGDCT estimators, to infer parameters in SDEs driven by colored additive noise. Then, \cref{sec:proofs} is devoted to the convergence analysis of the proposed estimators, and, in \cref{sec:Levy}, we extend our methods to a particular case of diffusion with multiplicative colored noise, which yields Lévy area correction. In \cref{sec:numerical_experiments}, we demonstrate the effectiveness of our approach through numerical experiments and we present a preliminary analysis on the asymptotic normality of the estimators. Finally, in \cref{sec:conclusion}, we draw our conclusions and address possible future developments.

\section{Problem setting} \label{sec:setting}

We consider the framework of \cite[Section 5.1]{Pav14} and model colored noise as a Gaussian stationary diffusion process, i.e., the Ornstein--Uhlenbeck process. Consider the system of SDEs for the processes $X_t^\epl \in \R^d$, $Y_t^\epl \in \R^n$ in the time interval $[0,T]$
\begin{equation} \label{eq:system_multiD}
\begin{aligned}
\d X^\epl_t &= h(X^\epl_t) \dd t + g(X^\epl_t) \frac{Y^\epl_t}{\epl} \dd t, \\
\d Y^\epl_t &= - \frac{A}{\epl^2} Y^\epl_t \dd t + \frac{\sigma}{\epl} \dd W_t,
\end{aligned}
\end{equation}
with initial conditions $X_0^\epl \in \R^d$, $Y_0^\epl \in \R^n$, and where $h \colon \R^d \to \R^d$, $g \colon \R^d \to \R^{d \times n}$, $A \in \R^{n \times n}$, $\sigma \in \R^{n \times m}$, $W_t \in \R^m$ is a standard Brownian motion, and $0 < \epl \ll 1$ is the parameter which characterizes the colored noise. In the limit as $\epl \rightarrow 0$, the process $X_t^{\epl}$ converges to an SDE driven by white noise~\cite{PaS05}. Define $\Sigma = \sigma \sigma^\top \in \R^{n \times n}$ and assume that the eigenvalues of $A$ have positive real parts and that the matrix $\Sigma$ is positive definite. Then, the process $Y^\epl_t$ has a unique invariant measure which is Gaussian with zero mean and covariance matrix $\Sigma_\infty \in \R^{n \times n}$ which satisfies the steady-state variance equation 
\begin{equation}
A \Sigma_\infty + \Sigma_\infty A^\top = \Sigma.
\end{equation}
Moreover, define the quantities
\begin{equation}
\begin{aligned}
&B \colon \R^d \to \R^{d \times n}, \qquad &&B(x) = g(x) A^{-1}, \\
&R \colon \R^d \to \R^{d \times n}, \qquad &&R(x) = g(x) \Sigma_\infty, \\
&D \colon \R^d \to \R^{d \times d}, \qquad &&D(x) = R(x) B(x)^\top, \\
&b \colon \R^d \to \R^d, \qquad &&b(x) = \nabla \cdot D(x)^\top - B(x) \nabla \cdot R(x)^\top.
\end{aligned}
\end{equation}
In the limit as $\epl \to 0$ the process $X_t^\epl$ converges weakly in $\mathcal C^0([0,T];\R^d)$ to the solution $X_t$ of the Itô SDE (see, e.g., \cite{HMW19} and \cite[Section 5.1]{Pav14})
\begin{equation} \label{eq:limit_multiD}
\d X_t = (h(X_t) + b(X_t)) \dd t + \sqrt{2 D^S(X_t)} \dd W_t,
\end{equation}
with initial condition $X_0 = X_0^\epl$, where $W_t$ again denotes standard $d$-dimensional Brownian motion, $D^S$ denotes the symmetric part of the matrix $D$, i.e., $D^S(x) = (D(x) + D(x)^\top)/2$, and the additional drift term $b$ is called \emph{Lévy area correction}.

\begin{example} \label{ex:multiD}
Let $n=m=2$, $\alpha, \gamma, \eta$ be positive constants and consider system \eqref{eq:system_multiD} with 
\begin{equation}
A = \alpha I + \gamma J \qquad \text{and} \qquad \sigma = \sqrt{\eta} I,
\end{equation}
where
\begin{equation}
I = \begin{pmatrix} 1 & 0 \\ 0 & 1 \end{pmatrix} \qquad \text{and} \qquad
J = \begin{pmatrix} 0 & 1 \\ -1 & 0 \end{pmatrix}.
\end{equation}
Then we have
\begin{equation}
\Sigma = \eta I, \qquad \Sigma_\infty = \frac{\eta}{2\alpha} I, \qquad \text{and} \qquad A^{-1} = \frac1\rho A^\top \text{ where } \rho = \alpha^2 + \gamma^2,
\end{equation}
which give
\begin{equation}
B(x) = \frac{\alpha}{\rho} g(x) - \frac{\gamma}{\rho} g(x) J, \quad R(x) = \frac{\eta}{2\alpha} g(x), \quad D(x) = \frac{\eta}{2\rho} g(x) g(x)^\top + \frac{\eta \gamma}{2 \alpha \rho} g(x) J g(x)^\top,
\end{equation}
which in turn imply
\begin{equation}
D^S(x) = \frac{\eta}{2\rho} g(x) g(x)^\top.
\end{equation}
Therefore, we obtain
\begin{equation}
b(x) = \frac{\eta}{2\rho} \left( \nabla \cdot (g(x)g(x)^\top) - g(x) \nabla \cdot g(x)^\top \right) - \frac{\eta\gamma}{2\alpha\rho} \left( \nabla \cdot (g(x)Jg(x)^\top) - g(x)J \nabla \cdot g(x)^\top \right).
\end{equation}
\end{example}

\begin{remark} \label{rem:conversionItoStratonovich}
Since it will be employed in the following, we recall here the conversion from Stratonovich to Itô integrals and SDEs. Consider the $d$-dimensional Stratonovich SDE
\begin{equation} \label{eq:SDE_Stratonovich}
\d X_t = h(X_t) \dd t + g(X_t) \circ \d W_t,
\end{equation}
where $X_t \in \R^d$, $h \colon \R^d \to \R^d$, $g \colon \R^d \to \R^{d \times m}$ and $W_t$ is an $m$-dimensional Brownian motion. Then, equation \eqref{eq:SDE_Stratonovich} is equivalent to the Itô SDE
\begin{equation} \label{eq:SDE_Ito}
\d X_t = (h(X_t) + c(X_t)) \dd t + g(X_t) \dd W_t,
\end{equation}
with
\begin{equation} \label{eq:Ito2Stratonovich}
c(x) = \frac12 \left( \nabla \cdot (g(x)g(x)^\top) - g(x) \nabla \cdot g(x)^\top \right),
\end{equation}
and where the divergence of a matrix is computed per row. Moreover, we have the following conversion for integrals
\begin{equation}
\int_0^T X_t \circ \d Y_t = \int_0^T X_t \dd Y_t + \frac12 [X_t, Y_t]_T,
\end{equation}
where $[X_t, Y_t]_T$ denotes the quadratic covariation of two processes $X_t$ and $Y_t$.
\end{remark}

\subsection{One-dimensional case} \label{sec:one}

In the one-dimensional case, i.e., when $d=n=m=1$, a stronger result at the level of paths can be proven \cite[Result 5.1]{Pav14}. We notice that at stationarity the process $Y^\epl_t$ is a standard Gaussian process with auto-correlation function
\begin{equation}
\mathcal C(t,s) = \E \left[ Y^\epl_t Y^\epl_s \right] = \frac{\sigma^2}{2A} e^{-\frac{A}{\epl^2} \abs{t-s}},
\end{equation}
which implies 
\begin{equation}
\lim_{\epl \to 0} \E \left[ \frac{Y^\epl_t}{\epl} \frac{Y^\epl_s}{\epl} \right] = \frac{\sigma^2}{A^2} \delta(t),
\end{equation}
and therefore $Y^\epl_t/\epl$ converges to white noise $(\sigma/A) \dot W_t$ as $\epl$ vanishes. Moreover, by equation \eqref{eq:limit_multiD}, the process $X^\epl_t$ converges in the limit as $\epl \to 0$ to the solution $X_t$ of the Itô SDE
\begin{equation}
\d X_t = h(X_t) \dd t + \frac{\sigma^2}{2A^2} g(X_t) g'(X_t) \dd t + \frac{\sigma}{A} g(X_t) \dd W_t,
\end{equation}
which, due to \cref{rem:conversionItoStratonovich}, is equivalent to the Stratonovich SDE
\begin{equation}
\d X_t = h(X_t) \dd t + \frac{\sigma}{A} g(X_t) \circ \d W_t.
\end{equation}
Finally, through a repeated application of Itô's formula we can derive error estimates. In particular, it is possible to show that
\begin{equation} \label{eq:rate_weak_convergence}
\E \left[ (X^\epl_t - X_t)^2 \right]^{1/2} \le C\epl,
\end{equation}
where $C>0$ is a constant independent of $\epl$. We remark that for some particular examples pathwise convergence can also be proved in arbitrary dimensions \cite{KPS04,PaS05}.

\section{Parameter estimation for additive colored noise} \label{sec:estimation_additive}

In this section, we consider the problem of estimating parameters which appear linearly in the drift, given observations only from the fast process $(X_t^\epl)_{t\in[0,T]}$. In this section we focus on the (easier) case of additive noise case, where the Lévy area correction does not appear. Let the drift function $h$ depend linearly on an unknown matrix $\theta \in \R^{d \times \ell}$, i.e., $h(x) = \theta f(x)$ for some function $f \colon \R^d \to \R^\ell$, and let the diffusion function $g(x) = G$ be constant. Then, system \eqref{eq:system_multiD} reads
\begin{equation} \label{eq:system_multiD_additive}
\begin{aligned}
\d X^\epl_t &= \theta f(X^\epl_t) \dd t + G \frac{Y^\epl_t}{\epl} \dd t, \\
\d Y^\epl_t &= - \frac{A}{\epl^2} Y^\epl_t \dd t + \frac{\sigma}{\epl} \dd W_t,
\end{aligned}
\end{equation}
and the limit equation \eqref{eq:limit_multiD} becomes
\begin{equation} \label{eq:limit_multiD_additive}
\d X_t = \theta f(X_t) \dd t + \sqrt{2D^S} \dd W_t,
\end{equation}
where $D = G \Sigma_\infty A^{-\top} G^\top$. We assume the following conditions which guarantee ergodicity of the colored and white noise SDEs.

\begin{assumption} \label{as:ergodicity}
There exist constants $\mathfrak a, \mathfrak b, \mathfrak c > 0$, such that
\begin{equation}
\theta f(x) \cdot x \le \mathfrak a - \mathfrak b \norm{x}^2 \qquad \text{and} \qquad A y \cdot y \ge \mathfrak c \norm{y}^2,
\end{equation}
for all $x \in \R^d$ and $y \in \R^n$, and the dynamics $\dot x(t) = \theta f(x(t))$ has an asymptotically stable equilibrium point. Moreover, $f \in \mathcal C^2(\R^d)$ and the constants satisfy
\begin{equation}
2 \mathfrak b \mathfrak c - \norm{G}^2 > 0.
\end{equation}
\end{assumption}

\begin{example} \label{ex:OU}
Let us consider the simple case of the Ornstein--Uhlenbeck process in one dimension, i.e., set the drift function to $f(x) = - x$. Then the process $X^\epl_t$ can be computed analytically and is given by
\begin{equation}
X^\epl_t = X^\epl_0 e^{-\theta t} + \frac{G}{\epl} \int_0^t e^{-\theta(t-s)} Y^\epl_s \dd s.
\end{equation}
We remark that in this case the joint process $(X^\epl_t, Y^\epl_t)$ is a standard Gaussian process whose covariance matrix at stationarity is 
\begin{equation}
\mathcal C = \begin{pmatrix}
\dfrac{G^2 \sigma^2}{2\theta(A^2 - \epl^4\theta^2)} & \dfrac{\epl G \sigma^2}{2A(A + \epl^2 \theta)} \\
\dfrac{\epl G \sigma^2}{2A(A + \epl^2 \theta)} & \dfrac{\sigma^2}{2A}
\end{pmatrix},
\end{equation}
and the limit process is $X_t \sim \mathcal N(0, \frac{G^2\sigma^2}{2A^2\theta})$.
\end{example}

We recall that our goal is to infer the parameter $\theta$ given a realization $(X_t^\epl)_{t\in[0,T]}$ of the process defined by \eqref{eq:system_multiD_additive}. In the following subsections, we propose two different estimators: the first one is similar to the MLE, while the second one is based on the SGDCT.

\subsection{Maximum likelihood type estimator}

Let us first focus on equation \eqref{eq:limit_multiD_additive} and construct an estimator for $\theta$ given a trajectory $(X_t)_{t\in[0,T]}$ from the same limit equation. Inspired by the MLE, we propose the following estimator
\begin{equation} \label{eq:MLE_ok}
\widehat \theta(X,T) = \left( \int_0^T \d X_t \otimes f(X_t) \right) \left( \int_0^T f(X_t) \otimes f(X_t) \dd t \right)^{-1},
\end{equation}
where $\otimes$ denotes the outer product. 

\begin{remark}
The estimator in equation \eqref{eq:MLE_ok} is not directly obtained by maximizing the likelihood function, in fact the actual MLE would require the knowledge of the diffusion term. However, since the shape of the estimator is similar and it is obtained by replacing the diffusion term by the identity matrix, in the following we will always refer to it as the MLE estimator.
\end{remark}

In the following result, whose proof can be found in \cref{sec:proofs}, we show that the MLE estimator is asymptotically unbiased in the limit of infinite time. For clarity of the exposition, \cref{as:positive_definite}$(i)$, which is required in \cref{thm:MLE_ok,pro:MLE_ko} below for the well-posedness of the estimator, will be stated in the next section together with the corresponding \cref{as:positive_definite}$(ii)$ for filtered data.

\begin{theorem} \label{thm:MLE_ok}
Let \cref{as:ergodicity,as:positive_definite}$(i)$ hold and let $\widehat \theta(X,T)$ be defined in \eqref{eq:MLE_ok}. Then, it holds
\begin{equation}
\lim_{T\to\infty} \widehat \theta(X,T) = \theta, \qquad a.s.
\end{equation}
\end{theorem}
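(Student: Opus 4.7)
The plan is to reduce the estimator to a sum of $\theta$ and a vanishing remainder by substituting the SDE \eqref{eq:limit_multiD_additive} into the numerator of \eqref{eq:MLE_ok}. Concretely, I write
\begin{equation*}
\int_0^T \d X_t \otimes f(X_t) = \theta \int_0^T f(X_t)\otimes f(X_t)\dd t + \sqrt{2D^S}\int_0^T \d W_t \otimes f(X_t),
\end{equation*}
which immediately gives
\begin{equation*}
\widehat\theta(X,T) - \theta \;=\; \sqrt{2D^S}\left(\frac{1}{T}\int_0^T \d W_t \otimes f(X_t)\right)\left(\frac{1}{T}\int_0^T f(X_t)\otimes f(X_t)\dd t\right)^{-1}.
\end{equation*}
From here, almost sure convergence to zero will follow from two independent ingredients: an ergodic limit for the Gram-type matrix in the denominator, and a strong law of large numbers for the martingale in the numerator.

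For the denominator, I would invoke \cref{as:ergodicity}: the dissipativity bound $\theta f(x)\cdot x \le \mathfrak a - \mathfrak b\|x\|^2$ ensures that the limit SDE \eqref{eq:limit_multiD_additive} admits a unique invariant measure $\mu$ and is geometrically ergodic, with $f(X_t)\otimes f(X_t)$ integrable under $\mu$ because $f\in \mathcal C^2(\R^d)$ and $\mu$ has Gaussian-type tails. Birkhoff's ergodic theorem then yields
\begin{equation*}
\frac{1}{T}\int_0^T f(X_t)\otimes f(X_t)\dd t \;\xrightarrow[T\to\infty]{\text{a.s.}}\; M := \int_{\R^d} f(x)\otimes f(x)\dd\mu(x),
\end{equation*}
and the invertibility of this limit is exactly the well-posedness condition \cref{as:positive_definite}$(i)$ referenced in the statement.

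For the numerator, the process $N_T := \int_0^T \d W_t \otimes f(X_t)$ is a continuous (matrix-valued) martingale whose quadratic variation grows at rate $\int_0^T f(X_t)\otimes f(X_t)\dd t = O(T)$ almost surely by the previous step. The strong law of large numbers for continuous martingales (see e.g.\ the standard result that $M_T/\langle M\rangle_T \to 0$ a.s.\ when $\langle M\rangle_T\to \infty$) then gives $N_T/T \to 0$ almost surely, component by component. Combining with the invertibility of $M$, the product in the display above converges almost surely to zero, proving the claim.

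The only non-routine point is justifying the ergodic theorem and moment bounds for $X_t$; this is standard under \cref{as:ergodicity} (Lyapunov function $V(x)=1+\|x\|^2$ yields a drift condition) and I would cite the classical criteria rather than redoing the proof. Everything else is algebraic manipulation plus a textbook martingale LLN, so no genuine obstacle is expected.
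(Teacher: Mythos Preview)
Your proposal is correct and follows essentially the same route as the paper: substitute the limit SDE into the numerator, apply the ergodic theorem to $\frac{1}{T}\int_0^T f(X_t)\otimes f(X_t)\dd t$ together with \cref{as:positive_definite}$(i)$ for invertibility, and use a martingale law of large numbers to kill $\frac{1}{T}\int_0^T \d W_t\otimes f(X_t)$. The paper's proof is more terse (it cites the martingale central limit theorem rather than the martingale SLLN for the numerator), but the structure and ideas are identical.
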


Let us now consider the more interesting problem of estimating the parameter $\theta$ given observations from the system with colored noise. Since the process $X_t^\epl$ is close in a weak sense to the process $X_t$, it is tempting to replace $X_t$ by $X_t^\epl$ in the definition of the previous estimator, which yields
\begin{equation} \label{eq:MLE_ko}
\widehat \theta(X^\epl,T) = \left( \int_0^T \d X_t^\epl \otimes f(X_t^\epl) \right) \left( \int_0^T f(X_t^\epl) \otimes f(X_t^\epl) \dd t \right)^{-1}.
\end{equation}
However, this estimator fails even when the process $X_t$ is one-dimensional. In particular, the estimator vanishes in the limit of infinite observation time, as stated in the following proposition, whose proof is presented in \cref{sec:proofs}.

\begin{proposition} \label{pro:MLE_ko}
Let \cref{as:ergodicity,as:positive_definite}$(i)$ hold and let $\widehat \theta(X^\epl,T)$ be defined in \eqref{eq:MLE_ko}. Then, if $d=1$  it holds
\begin{equation}
\lim_{T\to\infty} \widehat \theta(X^\epl,T) = 0, \qquad a.s.
\end{equation}
\end{proposition}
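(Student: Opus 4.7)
The plan is to exploit the special structure in $d=1$: equation \eqref{eq:system_multiD_additive} has no martingale part for $X^\epl_t$, so $X^\epl_t$ is pathwise differentiable in $t$, and $\int_0^T f(X^\epl_t)\,\d X^\epl_t$ is an ordinary Riemann integral. This lets me split the numerator of $\widehat\theta(X^\epl,T)$ into a signal term and a noise term, both time-averages of bounded-in-$L^1$ functions of the joint stationary process $(X^\epl_t,Y^\epl_t)$. A stationary integration-by-parts identity will then produce an exact cancellation of $\theta$.

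\textbf{Step 1 (explicit form).} Substituting the $X^\epl$-equation directly,
\begin{equation*}
\int_0^T f(X^\epl_t)\,\d X^\epl_t
= \theta\int_0^T f(X^\epl_t)^2\,\dd t + \frac{G}{\epl}\int_0^T f(X^\epl_t)\,Y^\epl_t\,\dd t,
\end{equation*}
so that, on the event where the denominator is positive,
\begin{equation*}
\widehat\theta(X^\epl,T) - \theta
= \frac{(G/\epl)\int_0^T f(X^\epl_t)\,Y^\epl_t\,\dd t}{\int_0^T f(X^\epl_t)^2\,\dd t}.
\end{equation*}
\textbf{Step 2 (ergodicity).} Under \cref{as:ergodicity}, a standard Lyapunov argument with $V(x,y)=\alpha x^2 + \beta y^2$ (with $\alpha,\beta>0$ chosen so that $2\mathfrak b\mathfrak c-|G|^2>0$ ensures strict negativity of $\mathcal L^\epl V$ outside a compact) shows that $(X^\epl_t,Y^\epl_t)$ is geometrically ergodic with a unique invariant measure $\mu^\epl$ having finite moments of all orders. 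Birkhoff's ergodic theorem then yields, almost surely,
\begin{equation*}
\tfrac1T\!\int_0^T\! f(X^\epl_t)^2\,\dd t \;\longrightarrow\; \E_{\mu^\epl}[f(X)^2]>0,
\qquad
\tfrac1T\!\int_0^T\! f(X^\epl_t)Y^\epl_t\,\dd t \;\longrightarrow\; \E_{\mu^\epl}[f(X)Y],
\end{equation*}
positivity of the first limit being \cref{as:positive_definite}(i).

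\textbf{Step 3 (stationary identity and conclusion).} The crux is the identity
\begin{equation*}
\theta\,\E_{\mu^\epl}[f(X)^2] + \frac{G}{\epl}\,\E_{\mu^\epl}[f(X)Y] = 0. \qquad(\star)
\end{equation*}
To obtain $(\star)$ I let $H$ be any antiderivative of $f$ and regard it as a function on $\R^2$ independent of $y$; then the generator of the joint SDE acts as $\mathcal L^\epl H(x,y) = \bigl(\theta f(x)+Gy/\epl\bigr)f(x)$ (the $y$-derivatives vanish), and invariance of $\mu^\epl$ gives $\E_{\mu^\epl}[\mathcal L^\epl H]=0$, which is $(\star)$. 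Equivalently, for the stationary process, $\E[H(X^\epl_T)-H(X^\epl_0)]=0$ while this same quantity equals $\int_0^T \E[f(X^\epl_t)\dot X^\epl_t]\,\dd t$. Dividing numerator and denominator in Step~1 by $T$ and combining with Step~2 and $(\star)$ yields $\widehat\theta(X^\epl,T)-\theta \to -\theta$ a.s., as claimed.

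\textbf{Main obstacle.} The only delicate point is rigorously justifying $(\star)$, since it requires $H$, $f^2$, and $fY$ to lie in $L^1(\mu^\epl)$ and $H$ to be in the domain of $\mathcal L^\epl$. This is not automatic from ergodicity alone: one needs polynomial-growth control on $f$ so that the polynomial growth of $H$ is absorbed by the exponential tails of $\mu^\epl$ delivered by the Lyapunov function. Under the $\mathcal C^2$-regularity in \cref{as:ergodicity} together with the dissipative bound $\theta f(x)\cdot x\le \mathfrak a-\mathfrak b\|x\|^2$ this control is standard, but it is the step that must be handled with care.
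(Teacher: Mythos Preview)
Your proof is correct and follows essentially the same route as the paper: substitute the $X^\epl$-equation into the estimator, apply the ergodic theorem, and invoke the stationary identity $(\star)$, which the paper states as \cref{lem:magic_formula_d1} and derives by integrating the Fokker--Planck equation against an antiderivative of $f$ (the dual formulation of your $\E_{\mu^\epl}[\mathcal L^\epl H]=0$). Your discussion of the integrability needed for $(\star)$ is more explicit than the paper's, but the underlying argument is the same.
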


Therefore, in the next sections we propose two different approaches to infer the unknown coefficients in presence of colored noise, which rely on filtering the data with an appropriate kernel of the exponential family.

\subsection{The filtered data approach}

In \cite{AGP21} exponential filters were employed to remove the bias from the MLE when estimating parameters in the homogenized SDE given observations of the slow variable in the multiscale dynamics. Motivated by this paper, we introduce the kernel $k \colon [0,\infty) \to \R$ given by
\begin{equation} \label{eq:exp_kernel}
k(r) = \frac1\delta e^{-r/\delta},
\end{equation}
where $\delta > 0$ is a parameter measuring the filtering width. As it will become transparent later,  we need to assume the following condition on the filtering width in order to ensure the required ergodicity properties.

\begin{assumption} \label{as:assumption_delta}
The parameter $\delta$ in \eqref{eq:exp_kernel} satisfies
\begin{equation} 
\delta > \frac{\mathfrak c}{2 \mathfrak b \mathfrak c - \norm{G}^2},
\end{equation}
where $\mathfrak b, \mathfrak c, \norm{G}$ are given in \cref{as:ergodicity}.
\end{assumption}

\begin{remark}
Notice that the condition given in \cref{as:assumption_delta} on the filtering width $\delta$ is not very restrictive. In fact, if we assume, e.g., the function $f$ to be $f(x) = x^{2r+1}$ for an integer $r>0$, then the coefficient $\mathfrak b > 0$ can be taken arbitrarily large, and therefore the parameter $\delta$ can be chosen along the entire positive real axis.
\end{remark}

We then define filtered data by convolving the original data, driven by both colored and white noise, with the exponential kernel, and we obtain
\begin{equation} \label{eq:filtered_data}
Z^\epl_t = \int_0^t \frac1\delta e^{-(t-s)/\delta} X_s^\epl \dd s, \qquad Z_t = \int_0^t \frac1\delta e^{-(t-s)/\delta} X_s \dd s.
\end{equation}
The new estimators based on filtered data are obtained replacing one instance of the original data by the filtered data in both the terms in \eqref{eq:MLE_ok} for processes driven by white noise and \eqref{eq:MLE_ko} for processes driven by colored noise. Before defining the estimators and presenting the main theoretical results of this section, we introduce a technical assumption, which corresponds to the strong convexity of the objective function in~\cite{SiS20}. This assumption is related to the nondegeneracy of the Fisher information matrix studied in \cite{DeH23} for mean field SDEs, see also~\cite{PaZ22b} and, in our context, with the identifiability of the drift parameters $\theta$ from observations of $X_t^{\epl}$.

\begin{assumption} \label{as:positive_definite}
There exists a constant $K>0$ such that for all $v \in \R^\ell$
\begin{equation}
\begin{aligned}
(i) &\quad v^\top \E^\mu \left[ f(X) \otimes f(X) \right] v \ge K \norm{v}^2, \\
(ii) &\quad v^\top \E^\mu \left[ f(X) \otimes f(Z) \right] v \ge K \norm{v}^2,
\end{aligned}
\end{equation}
where $\mu$ is the invariant measure of the joint process $(X_t,Z_t)$, which will be rigorously defined in \cref{sec:ergodic}. Moreover, the coefficient $a$ in \eqref{eq:learning_rate} is such that $aK > 1$.
\end{assumption}

\begin{remark} \label{rem:positive_definite}
Notice that \cref{as:positive_definite} implies that the same properties hold true also for the processes $X_t^\epl$ and $Z_t^\epl$ driven by colored noise if $\epl$ is sufficiently small. In fact, we have for all $v \in \R^\ell$
\begin{equation} \label{eq:difference_E}
\begin{aligned}
v^\top \E^{\mu^\epl} \left[ f(X^\epl) \otimes f(Z^\epl) \right] v &= v^\top \E^\mu \left[ f(X) \otimes f(Z) \right] v \\
&\quad + v^\top \left( \E^{\mu^\epl} \left[ f(X^\epl) \otimes f(Z^\epl) \right] - \E^\mu \left[ f(X) \otimes f(Z) \right] \right) v.
\end{aligned}
\end{equation}
Then, letting $0 < \mathfrak d < K$, by the convergence in law of the process $(X_t^\epl,Z_t^\epl)$ to $(X_t,Z_t)$, there exists $\epl_0 > 0$ such that for all $\epl < \epl_0$ we get
\begin{equation}
\begin{aligned}
&\abs{v^\top \left( \E^{\mu^\epl} \left[ f(X^\epl) \otimes f(Z^\epl) \right] - \E^\mu \left[ f(X) \otimes f(Z) \right] \right) v} \\
&\hspace{2cm} \le \norm{\E^{\mu^\epl} \left[ f(X^\epl) \otimes f(Z^\epl) \right] - \E^\mu \left[ f(X) \otimes f(Z) \right]} \norm{v}^2 \le \mathfrak d \norm{v^2},
\end{aligned}
\end{equation}
which together with equation \eqref{eq:difference_E} and \cref{as:positive_definite} gives
\begin{equation}
v^\top \E^{\mu^\epl} \left[ f(X^\epl) \otimes f(Z^\epl) \right] v \ge (K - \mathfrak d) \norm{v}^2 \eqdef K_{\mathfrak d} \norm{v}^2,
\end{equation}
where we notice that $K_{\mathfrak d}$ can be chosen arbitrarily close to $K$. Similarly, we also obtain 
\begin{equation}
v^\top \E^{\mu^\epl} \left[ f(X^\epl) \otimes f(X^\epl) \right] v \ge K_{\mathfrak d} \norm{v}^2.
\end{equation}
Moreover, we remark that even if \cref{as:positive_definite} is stated for vectors $v \in \R^\ell$, then it also implies that for any matrix $V \in \R^{\ell \times \ell}$ we have 
\begin{equation}
\begin{aligned}
V \E^\mu \left[ f(X) \otimes f(X) \right] \colon V &\ge K \norm{V}^2, \\
V \E^\mu \left[ f(X) \otimes f(Z) \right] \colon V &\ge K \norm{V}^2, \\
V \E^{\mu^\epl} \left[ f(X^\epl) \otimes f(X^\epl) \right] \colon V &\ge K_{\mathfrak d} \norm{V}^2, \\
V \E^{\mu^\epl} \left[ f(X^\epl) \otimes f(Z^\epl) \right] \colon V &\ge K_{\mathfrak d} \norm{V}^2,
\end{aligned}
\end{equation}
where $\colon$ and $\norm{\cdot}$ stand for the Frobenius scalar product and norm, respectively. In fact, e.g., for the first inequality, but similarly for the others, we have
\begin{equation}
V \E^\mu \left[ f(X) \otimes f(X) \right] \colon V = \sum_{i=1}^{\ell} v_i^\top \E^\mu \left[ f(X) \otimes f(X) \right] v_i \ge K \sum_{i=1}^{\ell} \norm{v_i}^2 = K \norm{V}^2,
\end{equation}
where $v_1^\top, \dots, v_\ell^\top$ are the rows of the matrix $V$, and the inequality follows from \cref{as:positive_definite}.
\end{remark}

\begin{remark}
The hypotheses about the coercivity of the matrices $\E^\mu \left[ f(X) \otimes f(X) \right]$ and $\E^\mu \left[ f(X) \otimes f(Z) \right]$ in \cref{as:positive_definite} are not a strong limitations of the scope of the next theorems. In fact, the matrix $\E^\mu \left[ f(X) \otimes f(X) \right]$ is already symmetric positive semi-definite, so we are only requiring the matrix to have full rank. Moreover, the process $Z_t$ is a filtered version of the process $X_t$ and therefore in practice we expect $f(X_t)$ and $f(Z_t)$ to behave similarly for the majority of time, in particular when the filtering width $\delta$ is sufficiently small, implying the property for $\E^\mu \left[ f(X) \otimes f(Z) \right]$. Hence, we expect these assumptions to hold true in all concrete examples, as we also observed in our numerical experiments.
\end{remark}

We now study the performance of the MLE estimator \eqref{eq:MLE_ok} in the presence of observations from the limit equation. In particular, we define the estimator 
\begin{equation} \label{eq:exp_ok}
\widehat \theta_{\mathrm{exp}}^\delta(X,T) = \left( \int_0^T \d X_t \otimes f(Z_t) \right) \left( \int_0^T f(X_t) \otimes f(Z_t) \dd t \right)^{-1}
\end{equation}
and prove that it is asymptotically unbiased in the limit of infinite data. The following result is therefore analogous to \cref{thm:MLE_ok}, and shows that our modification does not affect the unbiasedness of the MLE.

\begin{theorem} \label{thm:exp_ok}
Let \cref{as:ergodicity,as:assumption_delta,as:positive_definite}$(ii)$ hold and let $\widehat \theta_{\mathrm{exp}}^\delta(X,T)$ be defined in \eqref{eq:exp_ok}. Then, it holds
\begin{equation}
\lim_{T\to\infty} \widehat \theta_{\mathrm{exp}}^\delta(X,T) = \theta, \qquad a.s.
\end{equation}
\end{theorem}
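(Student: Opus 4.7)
The plan is to substitute the limiting SDE $\d X_t = \theta f(X_t) \dd t + \sqrt{2D^S} \dd W_t$ directly into the numerator of $\widehat \theta_{\mathrm{exp}}^\delta(X,T)$. Using bilinearity of the outer product,
\begin{equation}
\int_0^T \d X_t \otimes f(Z_t) = \theta \int_0^T f(X_t) \otimes f(Z_t) \dd t + \sqrt{2D^S} \int_0^T \dd W_t \otimes f(Z_t),
\end{equation}
and therefore
\begin{equation}
\widehat \theta_{\mathrm{exp}}^\delta(X,T) - \theta = \sqrt{2D^S} \left( \frac{1}{T} \int_0^T \dd W_t \otimes f(Z_t) \right) \left( \frac{1}{T} \int_0^T f(X_t) \otimes f(Z_t) \dd t \right)^{-1}.
\end{equation}
The proof thus reduces to showing that the rescaled denominator converges almost surely to an invertible matrix, and that the rescaled martingale numerator vanishes almost surely.

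For the denominator, I would observe that $Z_t$ solves the auxiliary linear ODE $\dot Z_t = (X_t - Z_t)/\delta$, so the joint process $(X_t, Z_t)$ is a degenerate diffusion on $\R^{2d}$. Under \cref{as:ergodicity} together with \cref{as:assumption_delta}, one constructs a Lyapunov function of the form $V(x,z) = \alpha_1 \norm{x}^2 + \alpha_2 \norm{z}^2$ (with weights tuned so that the constraint on $\delta$ precisely ensures strict dissipativity), proving existence, uniqueness and exponential ergodicity of a joint invariant measure $\mu$ with all polynomial moments finite. The Birkhoff ergodic theorem then gives
\begin{equation}
\lim_{T \to \infty} \frac{1}{T} \int_0^T f(X_t) \otimes f(Z_t) \dd t = \E^\mu[f(X) \otimes f(Z)] \qquad a.s.,
\end{equation}
and this limit is invertible by \cref{as:positive_definite}$(ii)$.

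For the martingale term, $M_T \defeq \int_0^T \dd W_t \otimes f(Z_t)$ is a continuous local martingale (componentwise) with quadratic variation $\langle M_{ij} \rangle_T = \int_0^T f_j(Z_t)^2 \dd t$, which grows like $T$ by the ergodic theorem applied to the finite quantity $\E^\mu[\norm{f(Z)}^2] < \infty$ (the moments of $Z$ are controlled through the Lyapunov estimate). Hence $\langle M_{ij} \rangle_T \to \infty$ and the strong law of large numbers for continuous martingales (see, e.g., the arguments used in the proof of \cref{thm:MLE_ok} in \cref{sec:proofs}) yields $M_T/T \to 0$ almost surely. Combining this with the invertibility of the limit of the denominator and continuous mapping gives the claim.

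The main obstacle I anticipate is the ergodicity/moment-bound step: the joint process $(X_t, Z_t)$ is hypoelliptic (no noise acts directly on $Z$), so establishing the Lyapunov drift inequality and extracting the sharp condition on $\delta$ in \cref{as:assumption_delta} requires a careful choice of weighted Lyapunov function and a Gr\"onwall-type estimate balancing the dissipation coming from $\theta f(x)\cdot x \le \mathfrak a - \mathfrak b\norm{x}^2$, the cross term $G Y^\epl_t/\epl$ (absent in the limit but producing the constant $\norm{G}^2$), and the contraction $-\norm{z}^2/\delta$ inherent in the filter. Once this ergodic framework is in place, the rest of the proof reduces to standard applications of Birkhoff's theorem and martingale SLLN.
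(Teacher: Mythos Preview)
Your proposal is correct and follows essentially the same route as the paper: substitute the limit SDE into the estimator to isolate the martingale term, then apply the ergodic theorem to the denominator (invertibility coming from \cref{as:positive_definite}$(ii)$) and a martingale strong law to kill $\frac1T\int_0^T \d W_t\otimes f(Z_t)$. The ergodicity and moment bounds for the hypoelliptic pair $(X_t,Z_t)$ that you flag as the main obstacle are exactly what the paper establishes separately in \cref{lem:Lebesgue,lem:FP}, so your anticipated Lyapunov construction matches theirs; your slight worry about the $\norm{G}^2$ term is unnecessary here since the limit system \eqref{eq:system_filtered} no longer contains $Y^\epl$, and the dissipativity bound is in fact easier than in the colored-noise case.
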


We now focus on our main problem of interest, i.e., the case when the observations are generated by the system with colored noise, for which the corresponding estimator is given by
\begin{equation} \label{eq:exp_ok_eps}
\widehat \theta_{\mathrm{exp}}^\delta(X^\epl,T) = \left( \int_0^T \d X_t^\epl \otimes f(Z_t^\epl) \right) \left( \int_0^T f(X_t^\epl) \otimes f(Z_t^\epl) \dd t \right)^{-1}.
\end{equation}
The following theorem, which is the main result of this section and its proof can be found in \cref{sec:proofs}, shows that, in contrast with the estimator in \eqref{eq:MLE_ko}, the proposed estimator is asymptotically unbiased in the limit as $\epl\to0$ and $T\to\infty$.

\begin{theorem} \label{thm:exp_ok_eps}
Let \cref{as:ergodicity,as:assumption_delta,as:positive_definite}$(ii)$ hold and let $\widehat \theta_{\mathrm{exp}}^\delta(X^\epl,T)$ be defined in \eqref{eq:exp_ok_eps}. Then, it holds
\begin{equation}
\lim_{\epl\to0} \lim_{T\to\infty} \widehat \theta_{\mathrm{exp}}^\delta(X^\epl,T) = \theta, \qquad a.s.
\end{equation}
\end{theorem}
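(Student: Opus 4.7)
The plan is to follow the recipe used for the white-noise estimator in \cref{thm:exp_ok} but with the colored-noise input, isolating the systematic error introduced by the ``fast'' term $G Y^\epl_t/\epl$ and showing that this error vanishes in the iterated limit $T\to\infty$, $\epl\to0$.

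First I would use the SDE for $Y^\epl_t$ to write the time-integrated driver as a combination of an Itô differential and a boundary correction. Specifically, since $A Y^\epl_t/\epl^2\, \dd t = -\dd Y^\epl_t + (\sigma/\epl)\, \dd W_t$, we obtain
\begin{equation*}
\frac{Y_t^\epl}{\epl} \dd t = A^{-1}\sigma\, \dd W_t - \epl\, A^{-1} \dd Y_t^\epl.
\end{equation*}
Substituting this and the drift $\theta f(X^\epl_t)$ into $\dd X_t^\epl$ decomposes the numerator as
\begin{equation*}
\int_0^T \dd X_t^\epl \otimes f(Z_t^\epl) = \theta \int_0^T f(X_t^\epl) \otimes f(Z_t^\epl)\, \dd t + \int_0^T G A^{-1}\sigma\, \dd W_t \otimes f(Z_t^\epl) - \epl \int_0^T G A^{-1} \dd Y_t^\epl \otimes f(Z_t^\epl),
\end{equation*}
so that, after multiplying by the inverse denominator, one obtains the identity $\widehat\theta_{\mathrm{exp}}^\delta(X^\epl,T) = \theta + (M_T^\epl + \epl R_T^\epl)\cdot Q_T^{\epl,-1}$, where $M_T^\epl$ is an Itô martingale, $R_T^\epl$ is the $\dd Y^\epl$ integral, and $Q_T^\epl = \int_0^T f(X_t^\epl)\otimes f(Z_t^\epl)\,\dd t$.

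Next I would send $T\to\infty$ for fixed $\epl$. The joint process $(X^\epl_t,Y^\epl_t,Z^\epl_t)$ is a time-homogeneous diffusion; ergodicity with invariant measure $\mu^\epl$ and a unique stationary distribution follows, under \cref{as:ergodicity,as:assumption_delta}, from a Lyapunov argument on an augmented quadratic function of the form $\|x\|^2+c_Y\|y\|^2+c_Z\|z\|^2$, the bound on $\delta$ being exactly what is needed to absorb the cross terms coming from $\dd Z^\epl_t = (X^\epl_t-Z^\epl_t)/\delta\,\dd t$. The ergodic theorem then gives $Q_T^\epl/T \to \E^{\mu^\epl}[f(X^\epl)\otimes f(Z^\epl)]$ a.s., and by \cref{rem:positive_definite} this limit is invertible for all $\epl$ small enough with uniformly bounded inverse. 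The martingale term $M_T^\epl/T$ vanishes a.s.\ by the strong law of large numbers for continuous local martingales, using that its quadratic variation grows at most linearly (the integrand $f(Z^\epl)$ has bounded second moment under $\mu^\epl$). For $R_T^\epl$ I apply Itô's product formula; since $Z^\epl_t$ has bounded variation, there is no covariation term and
\begin{equation*}
\int_0^T \dd Y_t^\epl \otimes f(Z_t^\epl) = Y_T^\epl \otimes f(Z_T^\epl) - Y_0^\epl \otimes f(Z_0^\epl) - \frac{1}{\delta}\int_0^T Y_t^\epl \otimes f'(Z_t^\epl)(X_t^\epl - Z_t^\epl)\, \dd t.
\end{equation*}
Dividing by $T$, the boundary terms vanish a.s.\ (moments of $Y^\epl_T$ are uniformly bounded in $T$) and the remaining time average converges by ergodicity to $-\delta^{-1}\E^{\mu^\epl}[Y^\epl \otimes f'(Z^\epl)(X^\epl-Z^\epl)]$.

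Combining these pieces, $\lim_{T\to\infty}\widehat\theta_{\mathrm{exp}}^\delta(X^\epl,T) = \theta + \epl\, \mathcal{E}(\epl)$, where $\mathcal{E}(\epl)$ is the product of the aforementioned stationary expectation (from $R^\epl$) with the inverse of $\E^{\mu^\epl}[f(X^\epl)\otimes f(Z^\epl)]$. Both factors are bounded uniformly in $\epl$ as $\epl\to0$, the former because the Lyapunov function yields uniform-in-$\epl$ moment bounds on $(X^\epl,Y^\epl,Z^\epl)$ and the latter by \cref{rem:positive_definite}; hence $\epl\,\mathcal{E}(\epl)\to 0$, giving the claim.

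The main obstacle I expect is the ergodicity and uniform moment control of the augmented process $(X^\epl_t,Y^\epl_t,Z^\epl_t)$: constructing a Lyapunov function that works for \emph{all} sufficiently small $\epl$ and whose sub-level sets yield the needed bound $\sup_{\epl,t}\E\|(X^\epl_t,Y^\epl_t,Z^\epl_t)\|^p<\infty$ is where \cref{as:assumption_delta} is consumed and where the analysis borrowed from~\cite{AGP21} is essential. Once these bounds are in place, the decomposition above reduces the theorem to a straightforward combination of the ergodic theorem, the martingale law of large numbers, and an $O(\epl)$ correction.
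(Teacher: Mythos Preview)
Your argument is correct and takes a genuinely different route from the paper. The paper substitutes $\dd X_t^\epl$ directly, obtaining
\[
\widehat\theta_{\mathrm{exp}}^\delta(X^\epl,T)=\theta+\frac1\epl G\Bigl(\int_0^T Y_t^\epl\otimes f(Z_t^\epl)\,\dd t\Bigr)\Bigl(\int_0^T f(X_t^\epl)\otimes f(Z_t^\epl)\,\dd t\Bigr)^{-1},
\]
applies the ergodic theorem to get $\theta+\frac1\epl G\,\E^{\mu^\epl}[Y^\epl\otimes f(Z^\epl)]\,\E^{\mu^\epl}[f(X^\epl)\otimes f(Z^\epl)]^{-1}$, and then invokes two stationary Fokker--Planck identities (\cref{lem:magic_formulas}): identity $(i)$ rewrites $\frac1\epl G\,\E^{\mu^\epl}[Y^\epl\otimes f(Z^\epl)]$ as $-\theta\,\E^{\mu^\epl}[f(X^\epl)\otimes f(Z^\epl)]-\frac1\delta\E^{\mu^\epl}[X^\epl\otimes\nabla f(Z^\epl)(X^\epl-Z^\epl)]$, eliminating the apparent $1/\epl$ singularity; weak convergence then passes the remaining expectation to the white-noise limit, and identity $(ii)$ identifies it exactly with $-\theta\,\E^\mu[f(X)\otimes f(Z)]$. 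So the paper's proof never bounds an error term---it matches the limit to $\theta$ through two exact identities at the level of invariant measures.

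Your decomposition instead absorbs the $1/\epl$ into the dynamics via $Y_t^\epl/\epl\,\dd t=A^{-1}\sigma\,\dd W_t-\epl A^{-1}\dd Y_t^\epl$, which bypasses \cref{lem:magic_formulas} entirely and produces an explicit $O(\epl)$ remainder after integration by parts. The upshot is a more elementary, pathwise argument that also delivers a rate: $\lim_{T\to\infty}\widehat\theta_{\mathrm{exp}}^\delta(X^\epl,T)=\theta+O(\epl)$, which the paper's proof does not extract. The price is that you need uniform-in-$\epl$ moment control on the mixed quantity $Y^\epl\otimes\nabla f(Z^\epl)(X^\epl-Z^\epl)$ under $\mu^\epl$, whereas the paper only needs weak convergence of $(X^\epl,Z^\epl)$ to $(X,Z)$. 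One small gap to tighten: the boundary term $Y_T^\epl\otimes f(Z_T^\epl)/T\to 0$ \emph{a.s.}\ does not follow from moment boundedness alone; you should invoke, e.g., that for an ergodic diffusion with finite second moment one has $\|(X_T^\epl,Y_T^\epl,Z_T^\epl)\|/T\to 0$ a.s.\ (which is standard but worth stating).
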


In particular, due to \cref{thm:exp_ok_eps}, the estimator in \eqref{eq:exp_ok_eps} gives a straightforward methodology for inferring the unknown drift coefficient given observations from the system \eqref{eq:system_multiD_additive} with colored noise. Moreover, in the next section we show that this methodology based on filtered data can be applied to a different estimator, i.e., the SGDCT.

\begin{remark}
\cref{as:positive_definite} and \cref{rem:positive_definite} are important for the well-posedness of the MLE estimators \eqref{eq:MLE_ok}, \eqref{eq:MLE_ko}, \eqref{eq:exp_ok}, \eqref{eq:exp_ok_eps}. In fact, by the ergodic theorem we have, e.g., for the estimator \eqref{eq:exp_ok_eps}, and similarly for the others, that
\begin{equation}
\lim_{T \to \infty} \frac1T \int_0^T f(X_t^\epl) \otimes f(Z_t^\epl) \dd t = \E^{\mu^\epl} [f(X^\epl) \otimes f(Z^\epl)].
\end{equation}
Therefore, the positive-definiteness of the expectations in \cref{as:positive_definite} guarantees the invertibility of the matrices in the definition of the estimators, for sufficiently large time $T$.
\end{remark}

\begin{remark}
The exponential kernel in \eqref{eq:exp_kernel} is not the only possible choice for the filtering procedure. A different technique based on moving averages is presented in \cite{GaZ23}, where it is applied to multiscale diffusions. We expect that the analysis presented in this paper applies to the moving average-based filtering methodology, when applied to systems driven by colored noise. We will leave this for future work.
\end{remark}
 
\subsection{Coupling filtered data with SGDCT} \label{sec:SGDCT}

In this section, we present a different approach for inferring parameters in SDEs driven by colored noise. In particular, we employ the SGDCT method introduced in \cite{SiS17,SiS20}. We first consider the SGDCT as an inference methodology for estimating the parameter $\theta$ in the model \eqref{eq:limit_multiD_additive} given observations $(X_t)_{t\in[0,T]}$ from the same equation. The SGDCT consists of the following system of SDEs for the unknown parameter
\begin{equation} \label{eq:SDE_base_ok}
\begin{aligned}
\d \widetilde \theta_t &= \xi_t \d \mathcal I_t \otimes f(X_t), \\
\d \mathcal I_t &= \d X_t - \widetilde \theta_t f(X_t) \dd t,
\end{aligned}
\end{equation}
where $\mathcal I_t$ is called the innovation and $\xi_t$ is the learning rate, which has the form
\begin{equation} \label{eq:learning_rate}
\xi_t = \frac{a}{b+t},
\end{equation}
for some constants $a,b>0$, and with initial condition $\widetilde \theta_0 = \theta_0 \in \R^{d \times \ell}$. We note that the SGDCT estimator~\eqref{eq:SDE_base_ok} is in fact quite similar to the MLE estimator~\eqref{eq:MLE_ok}, with the additional feature of having introduced the learning rate $\xi_t$. Proceeding similarly to the previous section, we define analogous estimators for the reduced model using first data from the multiscale process driven by colored noise, and then employing filtered data. It turns out that for the latter, it is important to not modify the innovation term in order to keep the estimator asymptotically unbiased, as we will see in the theoretical analysis. The three considered estimators correspond the following system of SDEs:
\begin{align}
\d \widetilde \theta_t^\epl &= \xi_t \d \mathcal I_t^\epl \otimes f(X_t^\epl), \hspace{1.6cm} \d \mathcal I_{t}^\epl = \d X_t^\epl - \widetilde \theta_t^\epl f(X_t^\epl) \dd t, \label{eq:SDE_base_ko} \\
\d \widetilde \theta_{\mathrm{exp},t}^\delta &= \xi_t \d \mathcal I_{\mathrm{exp},t}^\delta \otimes f(Z_t), \qquad \d \mathcal I_{\mathrm{exp},t}^\delta = \d X_t - \widetilde \theta_{\mathrm{exp},t}^\delta f(X_t) \dd t, \label{eq:SDE_exp_ok} \\
\d \widetilde \theta_{\mathrm{exp},t}^{\delta,\epl} &= \xi_t \d \mathcal I_{\mathrm{exp},t}^{\delta,\epl} \otimes f(Z_t^\epl), \qquad \d \mathcal I_{\mathrm{exp},t}^{\delta,\epl} = \d X_t^\epl - \widetilde \theta_{\mathrm{exp},t}^{\delta,\epl} f(X_t^\epl) \dd t, \label{eq:SDE_exp_ok_eps}
\end{align}
with initial conditions $\widetilde \theta_0^\epl = \widetilde \theta_{\mathrm{exp},0}^\delta = \widetilde \theta_{\mathrm{exp},0}^{\delta,\epl} = \theta_0 \in \R^{d \times \ell}$. Consider now the one-dimensional case that was introduced in~\cref{sec:one}, i.e., when $d=\ell=n=m=1$. The solutions of the SDEs \eqref{eq:SDE_base_ok}, \eqref{eq:SDE_base_ko}, \eqref{eq:SDE_exp_ok}, \eqref{eq:SDE_exp_ok_eps} have a closed form expression, and therefore the estimators can be computed analytically and are given by
\begin{equation} \label{eq:SGDCT_1D_explicit}
\begin{aligned}
\widetilde \theta_t &= \theta + (\theta_0 - \theta) e^{-\int_0^t \xi_r f(X_r)^2 \dd r} + \sqrt{2D^S} \int_0^t \xi_s e^{-\int_s^t \xi_r f(X_r)^2 \dd r} f(X_s) \dd W_s, \\
\widetilde \theta_t^\epl &= \theta + (\theta_0 - \theta) e^{-\int_0^t \xi_r f(X_r^\epl)^2 \dd r} + G \int_0^t \xi_s e^{-\int_s^t \xi_r f(X_r^\epl)^2 \dd r} f(X_s^\epl) \frac{Y_s^\epl}{\epl} \dd s, \\
\widetilde \theta_{\mathrm{exp},t}^\delta &= \theta + (\theta_0 - \theta) e^{-\int_0^t \xi_r f(X_r) f(Z_r) \dd r} + \sqrt{2D^S} \int_0^t \xi_s e^{-\int_s^t \xi_r f(X_r) f(Z_r) \dd r} f(Z_s) \dd W_s, \\
\widetilde \theta_{\mathrm{exp},t}^{\delta,\epl} &= \theta + (\theta_0 - \theta) e^{-\int_0^t \xi_r f(X_r^\epl) f(Z_r^\epl) \dd r} + G \int_0^t \xi_s e^{-\int_s^t \xi_r f(X_r^\epl) f(Z_r^\epl) \dd r} f(Z_s^\epl) \frac{Y_s^\epl}{\epl} \dd s.
\end{aligned}
\end{equation}
In the next section we show that MLE and SGDCT behave similarly in the limit at $t \rightarrow +\infty$, and therefore also the estimators based on SGDCT are asymptotically unbiased. In particular, we have the following convergence results.

\begin{theorem} \label{thm:SGDCT_ok}
Let $\widetilde \theta_t$ be defined in \eqref{eq:SDE_base_ok}. Under \cref{as:ergodicity,as:positive_definite}$(i)$, it holds
\begin{equation}
\lim_{t\to\infty} \widetilde \theta_t = \theta, \qquad \text{in } L^2.
\end{equation}
\end{theorem}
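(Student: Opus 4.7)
The plan is to analyze the error process $e_t := \widetilde\theta_t - \theta$ through a quadratic Lyapunov argument on $V_t := \|e_t\|_F^2$. Substituting $\d X_t = \theta f(X_t)\dd t + \sqrt{2D^S}\dd W_t$ into the innovation gives $\d\mathcal I_t = -e_t f(X_t)\dd t + \sqrt{2D^S}\dd W_t$, so
\[
\d e_t = -\xi_t\, e_t\,\big(f(X_t)\otimes f(X_t)\big)\dd t + \xi_t\,\sqrt{2D^S}\dd W_t\otimes f(X_t),
\]
a linear SDE in $e_t$ with $X_t$-dependent coefficients.

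Next I would apply Itô's formula to $V_t$. Using $(e f)\otimes f = e\,(f\otimes f)$ and the Frobenius identity $e : e\,(f\otimes f) = \|e f\|^2$, one obtains
\[
\d V_t = -2\xi_t\,\|e_t f(X_t)\|^2\dd t + \d M_t + 2\xi_t^2\,\trace(D^S)\,\|f(X_t)\|^2\dd t,
\]
where $M_t$ is a local martingale with zero expectation (modulo a standard localization argument based on a priori moment bounds). Taking expectations,
\[
\frac{\d\E[V_t]}{\d t} = -2\xi_t\,\E\!\left[\|e_t f(X_t)\|^2\right] + 2\xi_t^2\,\trace(D^S)\,\E\!\left[\|f(X_t)\|^2\right].
\]
The noise term is integrable in time since $\xi_t = a/(b+t)$ yields $\int_0^\infty\xi_t^2\dd t<\infty$, while $\E[\|f(X_t)\|^2]$ is uniformly bounded by the moment estimates coming from the dissipativity part of \cref{as:ergodicity}.

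The main obstacle is to replace the instantaneous drift $\E[\|e_t f(X_t)\|^2]$ by the averaged quantity $\E[e_t M : e_t]$, which by \cref{rem:positive_definite} is bounded below by $K\,\E[V_t]$, with $M:=\E^\mu[f(X)\otimes f(X)]$. Since $e_t$ and $X_t$ are coupled, one cannot invoke \cref{as:positive_definite}$(i)$ directly. I would resolve this via a Poisson equation: let $\phi:\R^d\to\R^{\ell\times\ell}$ solve $\mathcal L\phi(x) = f(x)\otimes f(x) - M$, where $\mathcal L$ is the infinitesimal generator of the limit process \eqref{eq:limit_multiD_additive}; existence of $\phi$ with polynomial growth follows from the exponential ergodicity guaranteed by \cref{as:ergodicity}. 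Applying Itô componentwise to $\trace(e_t^\top e_t\,\phi(X_t))$ and integrating by parts in $\xi_s$ represents
\[
\int_0^t\!\xi_s\,\big[\|e_s f(X_s)\|^2 - e_s M : e_s\big]\dd s
\]
as a boundary term, an integral against $\d\xi_s$, a martingale, and a cross-variation contribution generated by the quadratic variation of $e_s$; each piece is controlled using $\int_0^\infty\xi_s^2\dd s<\infty$ together with a priori $L^4$-bounds on $e_t$ obtained from a separate Itô computation on $V_t^2$.

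Inserting the averaged lower bound into the differential inequality gives
\[
\frac{\d\E[V_t]}{\d t} \le -2K\xi_t\,\E[V_t] + C\xi_t^2 + r(t),
\]
with $\int_0^\infty r(t)\dd t<\infty$. Multiplying by the integrating factor $\exp\!\big(2K\int_0^t\xi_s\dd s\big)$ and using $\int_0^\infty\xi_s\dd s=\infty$ together with $\int_0^\infty\xi_s^2\dd s<\infty$ yields $\E[V_t]\to0$, i.e.\ $\widetilde\theta_t\to\theta$ in $L^2$. The condition $aK>1$ in \cref{as:positive_definite} is not strictly needed for the convergence itself but sharpens the decay rate to the standard $\mathcal O(1/t)$.
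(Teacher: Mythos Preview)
Your proposal is correct and follows essentially the same strategy as the paper: the paper's proof (deferred to that of \cref{thm:SGDCT_exp_ok}) also studies $\Gamma_t=\norm{\widetilde\theta_t-\theta}^2$ via It\^o's formula, splits off the averaged drift using \cref{as:positive_definite}, and controls the fluctuation term $f(X_t)\otimes f(X_t)-\E^\mu[f(X)\otimes f(X)]$ through the Poisson equation for the generator combined with an It\^o expansion of a time-weighted functional of $(\widetilde\theta_t,X_t)$. The only substantive difference is that the paper works on the torus (see \cref{rem:compact_state_space}) so that the Poisson solution and its derivatives are bounded via \cref{lem:Poisson_expansion}, whereas you invoke polynomial growth on $\R^d$; this is morally right but requires the Pardoux--Veretennikov machinery the paper explicitly chooses to avoid.
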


\begin{proposition} \label{pro:SGDCT_ko}
Let $\widetilde \theta_t^\epl$ be defined in \eqref{eq:SDE_base_ko}. Under \cref{as:ergodicity,as:positive_definite}$(i)$, it holds
\begin{equation}
\lim_{t\to\infty} \widetilde \theta_t^\epl = 0, \qquad \text{in } L^2.
\end{equation}
\end{proposition}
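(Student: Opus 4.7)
The plan is to mirror the approach used for the MLE in \cref{pro:MLE_ko} but working through the explicit one-dimensional representation in \eqref{eq:SGDCT_1D_explicit}. The underlying reason the estimator fails is the same in both cases: at stationarity the ``drift balance'' for $X_t^\epl$ forces the natural fixed point of the SGDCT recursion to be $0$, not $\theta$. Concretely, by applying Itô's formula to $F(X_t^\epl)$ with $F' = f$ and using that $(X_t^\epl, Y_t^\epl)$ is ergodic with invariant measure $\mu^\epl$, one obtains
\begin{equation*}
\theta\,\E^{\mu^\epl}[f(X^\epl)^2] + G\,\E^{\mu^\epl}\!\left[f(X^\epl)\tfrac{Y^\epl}{\epl}\right] = 0.
\end{equation*}
This is the key identity I will exploit throughout, and I will denote $\kappa^\epl \defeq \E^{\mu^\epl}[f(X^\epl)^2] > 0$ (positive for all sufficiently small $\epl$ by \cref{as:positive_definite}$(i)$ and \cref{rem:positive_definite}).

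Next, I would rewrite the third formula in \eqref{eq:SGDCT_1D_explicit} in a more telling form. Setting $I_t \defeq \int_0^t \xi_r f(X_r^\epl)^2\dd r$ and $h_s \defeq G f(X_s^\epl) Y_s^\epl/\epl + \theta f(X_s^\epl)^2$, the identity above gives $\E^{\mu^\epl}[h] = 0$. Substituting $G f(X_s^\epl) Y_s^\epl/\epl = h_s - \theta f(X_s^\epl)^2$ into \eqref{eq:SGDCT_1D_explicit} and noting that $\int_0^t \xi_s f(X_s^\epl)^2 e^{-(I_t-I_s)}\dd s = \int_0^t \tfrac{d}{ds}e^{-(I_t-I_s)}\dd s = 1 - e^{-I_t}$ (the Duhamel identity), the $\theta$ and the deterministic $\theta_0-\theta$ terms collapse to leave
\begin{equation*}
\widetilde \theta_t^\epl = \theta_0\, e^{-I_t} + \int_0^t \xi_s\, e^{-(I_t-I_s)}\, h_s \dd s.
\end{equation*}
Our task is therefore to show that both summands vanish in $L^2$.

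For the first summand, the ergodic theorem yields $I_t / \log(b+t) \to a\kappa^\epl$ almost surely, so $e^{-I_t}$ decays like $t^{-a\kappa^\epl}$; together with an \emph{a priori} $L^2$ bound on $\widetilde\theta_t^\epl$ (obtained by a standard Gronwall estimate on the ODE it satisfies, which gives uniform integrability of $\theta_0 e^{-I_t}$), this first term goes to $0$ in $L^2$. The second summand is the main obstacle: it is a time-weighted, path-dependent average of the mean-zero ergodic process $h_s$, so neither classical ergodic theorems nor martingale arguments apply directly. I plan to handle it through the Poisson equation, letting $v$ solve $\mathcal L^\epl v = -h$ for the generator $\mathcal L^\epl$ of $(X_t^\epl, Y_t^\epl)$ (well-posed thanks to the dissipativity in \cref{as:ergodicity} and polynomial bounds on $h$). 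Itô's formula then writes $h_s \dd s$ as $-\d v(X_s^\epl, Y_s^\epl) + \dd M_s$ with $M_s$ a martingale of bounded quadratic variation rate; an integration by parts of $\int_0^t \xi_s e^{-(I_t-I_s)} h_s \dd s$ then produces (i) boundary terms of the form $\xi_t v_t - \xi_0 v_0 e^{-I_t}$ which vanish because $\xi_t \downarrow 0$, (ii) a term involving $\dot\xi_s$ which is an order-$1/s^2$ weighted average of $v$ and thus integrable, (iii) a term with $\xi_s \dot I_s = \xi_s^2 f(X_s^\epl)^2$ which is again integrable in $s$, and (iv) a stochastic integral whose $L^2$ norm can be bounded by $\int_0^t \xi_s^2 \dd s$-type quantities via the Itô isometry. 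Each of these pieces is shown to vanish in $L^2$ as $t\to\infty$, and combining with the first summand concludes the proof.

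The hardest step is clearly the $L^2$ control of $\int_0^t \xi_s e^{-(I_t-I_s)} h_s \dd s$; every alternative I considered (direct correlation-decay estimates on $\E[h_s h_r]$, or a stochastic-approximation-type Lyapunov argument) has to confront the fact that the exponential kernel $e^{-(I_t-I_s)}$ is itself a functional of the whole path of $X^\epl$. The Poisson-equation route is the only one that decouples the ergodic averaging from the random weight cleanly, which is why I would favor it.
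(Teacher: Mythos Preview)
Your proposal uses the same two ingredients as the paper: the stationary identity $\theta\,\E^{\mu^\epl}[f(X^\epl)^2]+G\,\E^{\mu^\epl}[f(X^\epl)Y^\epl/\epl]=0$ (this is \cref{lem:magic_formula_d1}) and the Poisson equation for the generator $\mathcal L^\epl$. The paper's proof simply combines \cref{pro:limit_SGDCT_t_NOfilter}, which identifies $\lim_{t\to\infty}\widetilde\theta_t^\epl$ in $L^2$ as $\theta+\frac{G}{\epl}\E^{\mu^\epl}[Y^\epl f(X^\epl)]\E^{\mu^\epl}[f(X^\epl)^2]^{-1}$, with \cref{lem:magic_formula_d1} to get zero. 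Your rewriting $\widetilde\theta_t^\epl=\theta_0 e^{-I_t}+\int_0^t\xi_s e^{-(I_t-I_s)}h_s\dd s$ with $\E^{\mu^\epl}[h]=0$ is a clean way of packaging the same idea.

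The execution, however, differs in one technically important respect, and this is where your step (iv) is too optimistic. Your kernel $e^{-(I_t-I_s)}$ is random and, as a function of $s$, anticipates the path up to time $t$; the object $\int_0^t\xi_s e^{-(I_t-I_s)}\dd M_s$ is therefore not an It\^o integral to which the isometry applies. Writing it as $e^{-I_t}\int_0^t\xi_s e^{I_s}\dd M_s$ restores adaptedness of the integrand, but now you must control the product of the two correlated factors; without a pointwise lower bound on $f^2$ you get no deterministic decay for $e^{-I_t}$, and the It\^o-isometry bound on the martingale part grows in $t$. Showing the product tends to $0$ in $L^2$ then needs essentially another fluctuation argument (Poisson equation applied to $(f(X)^2-\kappa^\epl)R^2$, with $R$ the stochastic integral itself), which brings you back to the device the paper uses.

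The paper avoids this difficulty altogether: in the proof of \cref{pro:limit_SGDCT_t_YESfilter} (to which \cref{pro:limit_SGDCT_t_NOfilter} refers) one works with $\Gamma_t=\norm{\Delta_t}^2$ and uses the coercivity \cref{as:positive_definite}$(i)$ and the comparison principle to replace the random kernel by the deterministic one $e^{-2K\int_s^t\xi_r\dd r}=((b+s)/(b+t))^{2aK}$. The Poisson-equation step is then carried out by applying It\^o's formula to a function of the form $\zeta_t(\vartheta-\theta)\psi^\epl(x,y)\colon\Delta$, i.e.\ with the estimator variable included, which is exactly what absorbs the residual coupling you would otherwise have to face in (iv). If you import that comparison step into your argument, the rest of your outline goes through; as written, step (iv) is a gap.
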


\begin{theorem} \label{thm:SGDCT_exp_ok}
Let $\widetilde \theta_{\mathrm{exp},t}^\delta$ be defined in \eqref{eq:SDE_exp_ok}. Under \cref{as:ergodicity,as:assumption_delta,as:positive_definite}$(ii)$, it holds
\begin{equation}
\lim_{t\to\infty} \widetilde \theta_{\mathrm{exp},t}^\delta = \theta, \qquad \text{in } L^2.
\end{equation}
\end{theorem}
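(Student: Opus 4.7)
The plan is to imitate the approach used for \cref{thm:SGDCT_ok} (the white-noise, unfiltered SGDCT) but with $f(X_t)$ in the multiplier replaced by $f(Z_t)$, exploiting \cref{as:positive_definite}$(ii)$ in place of $(i)$. I would begin by introducing the error $\eta_t = \widetilde\theta_{\mathrm{exp},t}^\delta - \theta$ and, using $\d X_t = \theta f(X_t)\dd t + \sqrt{2D^S}\dd W_t$, derive the linear SDE
\begin{equation*}
\d\eta_t = -\xi_t\,\eta_t\, f(X_t)\otimes f(Z_t)\dd t + \xi_t\sqrt{2D^S}\dd W_t \otimes f(Z_t),\qquad \eta_0 = \theta_0 - \theta.
\end{equation*}
Applying Itô's formula to $\|\eta_t\|^2 = \eta_t:\eta_t$ (Frobenius inner product) and taking expectation kills the martingale term and yields
\begin{equation*}
\frac{\d}{\dd t}\E\|\eta_t\|^2 = -2\xi_t\,\E\!\left[\eta_t\eta_t^\top : \eta_t f(X_t)\otimes f(Z_t)/\eta_t\right] + 2\xi_t^2\trace(D^S)\,\E\|f(Z_t)\|^2,
\end{equation*}
which after rearranging the bilinear form becomes $-2\xi_t\,\E[\eta_t^\top\eta_t : f(X_t)\otimes f(Z_t)] + 2\xi_t^2\trace(D^S)\E\|f(Z_t)\|^2$.

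Next I would handle the coupling between $\eta_t$ and the process $(X_t,Z_t)$ via a Poisson-equation argument as in \cite{SiS20}. The joint process $(X_t,Z_t)$ is ergodic under \cref{as:ergodicity,as:assumption_delta} (used already in \cref{thm:exp_ok}), so the cell problem
\begin{equation*}
\mathcal L\,\Phi(x,z) = f(x)\otimes f(z) - \E^\mu[f(X)\otimes f(Z)]
\end{equation*}
has a smooth solution with controlled growth. Using Itô on $\Phi(X_t,Z_t)$ and integration by parts in time against $\xi_s\,\eta_s^\top\eta_s$ allows me to substitute $f(X_t)\otimes f(Z_t)$ by its invariant mean up to boundary terms of order $\xi_t\|\eta_t\|^2$ and stochastic integrals, following the template of the proof of Theorem 3.2 in \cite{SiS20}. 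By \cref{as:positive_definite}$(ii)$ (together with the matrix extension in \cref{rem:positive_definite}), the replaced term satisfies $\eta_s^\top\eta_s : \E^\mu[f(X)\otimes f(Z)] \ge K\|\eta_s\|^2$.

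Setting $u(t) = \E\|\eta_t\|^2$, this produces a differential inequality of the form
\begin{equation*}
u'(t) \le -2\xi_t K\,u(t) + C\xi_t^2 + \text{l.o.t.},
\end{equation*}
where the lower-order terms are integrable in $t$ against the Grönwall kernel thanks to $\xi_t = a/(b+t)$. Solving this linear inequality explicitly, the homogeneous factor decays like $(b/(b+t))^{2aK}$, and the assumption $aK > 1$ from \cref{as:positive_definite} guarantees that the convolution of $\xi_s^2$ against this kernel also vanishes, giving $u(t)\to 0$. The main technical obstacle is the Poisson-equation step: one must ensure the solution $\Phi$ has at most polynomial growth (so that the correction terms involving $\Phi(X_t,Z_t)\eta_t^\top\eta_t$ remain controllable by $\xi_t\E\|\eta_t\|^2$ plus manageable remainders), which requires the hypoellipticity and dissipativity of the joint $(X_t,Z_t)$ dynamics established under \cref{as:ergodicity,as:assumption_delta}.
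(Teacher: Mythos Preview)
Your proposal is correct and follows essentially the same route as the paper: write the SDE for the error $\eta_t=\widetilde\theta_{\mathrm{exp},t}^\delta-\theta$, apply It\^o to $\|\eta_t\|^2$, use the coercivity from \cref{as:positive_definite}$(ii)$ (in the matrix form of \cref{rem:positive_definite}) to extract $-2K\xi_t\|\eta_t\|^2$, handle the fluctuation $f(X_t)\otimes f(Z_t)-\E^\mu[f(X)\otimes f(Z)]$ via the Poisson equation for the generator of $(X_t,Z_t)$, and conclude by a Gr\"onwall/comparison argument using $aK>1$. Two points to make explicit when you write it up: first, the paper invokes \cref{lem:Poisson_expansion} on the torus (cf.\ \cref{rem:compact_state_space}), so the Poisson solution and its derivatives are \emph{bounded} rather than merely of polynomial growth, which is what makes the correction terms tractable; second, the remainder terms coming from It\^o applied to $\psi(X_t,Z_t)$ against $\eta_t$ involve products like $\|\eta_t\|^2\psi$ and $\eta_t\otimes\d\eta_t$, and controlling their expectations uniformly in $t$ requires the a priori moment bound $\sup_t\E\|\eta_t\|^p<\infty$ established separately in \cref{lem:bounded_moments}.
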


\begin{theorem} \label{thm:SGDCT_exp_ok_eps}
Let $\widetilde \theta_{\mathrm{exp},t}^{\delta,\epl}$ be defined in \eqref{eq:SDE_exp_ok_eps}. Under \cref{as:ergodicity,as:assumption_delta,as:positive_definite}$(ii)$, it holds
\begin{equation}
\lim_{\epl\to0} \lim_{t\to\infty} \widetilde \theta_{\mathrm{exp},t}^{\delta,\epl} = \theta, \qquad \text{in } L^2.
\end{equation}
\end{theorem}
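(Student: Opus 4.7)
My plan is to mirror the strategy used in the proof of \cref{thm:SGDCT_exp_ok} (filtered SGDCT driven by the white-noise limit) and couple it with the colored-noise integration-by-parts machinery developed for the filtered MLE in \cref{thm:exp_ok_eps}. I would take the iterated limit in the stated order: first $t\to\infty$ with $\epl>0$ fixed, then $\epl\to 0$. The starting point is to introduce the error matrix $e_t = \widetilde\theta^{\delta,\epl}_{\mathrm{exp},t} - \theta$ and substitute the colored-noise SDE \eqref{eq:system_multiD_additive} into the innovation $\d\mathcal I^{\delta,\epl}_{\mathrm{exp},t}$, obtaining
\begin{equation*}
\d e_t = -\xi_t\bigl[e_t f(X^\epl_t)\bigr]\otimes f(Z^\epl_t)\dd t + \xi_t\bigl[G\,Y^\epl_t/\epl\bigr]\otimes f(Z^\epl_t)\dd t.
\end{equation*}
Since this equation is purely finite-variation, the chain rule together with the identity $e\colon\bigl([ef(x)]\otimes f(z)\bigr) = (ef(x))\cdot(ef(z))$ gives
\begin{equation*}
\tfrac{d}{dt}\|e_t\|^2 = -2\xi_t\bigl(e_t f(X^\epl_t)\bigr)\cdot\bigl(e_t f(Z^\epl_t)\bigr) + 2\xi_t\bigl(GY^\epl_t/\epl\bigr)\cdot\bigl(e_t f(Z^\epl_t)\bigr).
\end{equation*}

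For the first limit $t\to\infty$ I would invoke ergodicity of the augmented Markov triple $(X^\epl_t,Y^\epl_t,Z^\epl_t)$, whose invariant measure $\mu^\epl$ is well-defined under \cref{as:ergodicity,as:assumption_delta}; the lower bound on $\delta$ enters exactly here, as in \cite{AGP21}. The time-averaged quadratic term then produces the positive-definite contraction $e\,\E^{\mu^\epl}[f(X^\epl)\otimes f(Z^\epl)]\colon e\ge K_{\mathfrak d}\|e\|^2$ by \cref{rem:positive_definite}, and combined with $\int_0^\infty\xi_t\dd t=\infty$, $\int_0^\infty\xi_t^2\dd t<\infty$ and the step-size condition $aK_{\mathfrak d}>1$ a Gronwall / stochastic-approximation argument along the lines of \cite[Thm.~2.4]{SiS20} (and of the proof of \cref{thm:SGDCT_exp_ok}) gives $L^2$-convergence of $\widetilde\theta^{\delta,\epl}_{\mathrm{exp},t}$ to a deterministic limit $\theta^\epl_\infty$ characterized by the fixed-point equation $\theta^\epl_\infty\,\E^{\mu^\epl}[f(X^\epl)\otimes f(Z^\epl)] = \E^{\mu^\epl}\bigl[(\theta f(X^\epl)+GY^\epl/\epl)\otimes f(Z^\epl)\bigr]$.

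For the second limit $\epl\to 0$ I would show $\theta^\epl_\infty\to\theta$. By the weak convergence $(X^\epl,Z^\epl)\Rightarrow(X,Z)$, the matrix $\E^{\mu^\epl}[f(X^\epl)\otimes f(Z^\epl)]$ converges to $\E^\mu[f(X)\otimes f(Z)]$, which is invertible by \cref{as:positive_definite}(ii). The decisive term is the bias $\E^{\mu^\epl}[GY^\epl/\epl\otimes f(Z^\epl)]$, which the filtering is specifically designed to annihilate: using the integration-by-parts identity $Y^\epl_s\dd s/\epl = -\epl A^{-1}\d Y^\epl_s + A^{-1}\sigma\d W_s$ to transfer differentials onto the smooth filtered observable $f(Z^\epl_s)$, together with stationarity, this bias vanishes as $\epl\to 0$, exactly as in the proof of \cref{thm:exp_ok_eps}. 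The remaining piece $\E^{\mu^\epl}[\theta f(X^\epl)\otimes f(Z^\epl)]\to\theta\,\E^\mu[f(X)\otimes f(Z)]$, so solving the fixed-point equation in the limit gives $\theta^\epl_\infty\to\theta$.

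The main obstacle is establishing quantitative ergodicity of the augmented non-Markovian (in $X^\epl$ alone) triple $(X^\epl,Y^\epl,Z^\epl)$ with mixing rates that do not degenerate as $\epl\to 0$; this is what justifies commuting the ergodic average with the slow decay of $\xi_t$ at a rate good enough for the Gronwall closure above. Once that uniform ergodicity is in hand, the stochastic-approximation machinery and the bias-cancellation integration-by-parts already developed for the earlier theorems slot in to finish the proof.
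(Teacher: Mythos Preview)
Your proposal is essentially the paper's proof: identify the $t\to\infty$ limit $\theta^\epl_\infty=\theta+\tfrac{1}{\epl}G\,\E^{\mu^\epl}[Y^\epl\otimes f(Z^\epl)]\,\E^{\mu^\epl}[f(X^\epl)\otimes f(Z^\epl)]^{-1}$ via a stochastic-approximation/Poisson-equation argument (this is exactly \cref{pro:limit_SGDCT_t_YESfilter}), and then pass $\epl\to 0$ using the filtered bias-cancellation identity from the proof of \cref{thm:exp_ok_eps}.

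Two small comments. First, the paper carries out the bias cancellation via the stationary Fokker--Planck identities of \cref{lem:magic_formulas}, not the pathwise substitution $Y^\epl_s\dd s/\epl=-\epl A^{-1}\d Y^\epl_s+A^{-1}\sigma\dd W_s$ you sketch; both routes work, and in fact yours is slightly more direct since integrating by parts against the finite-variation process $f(Z^\epl_t)$ yields $\tfrac{1}{\epl}G\,\E^{\mu^\epl}[Y^\epl\otimes f(Z^\epl)]=\tfrac{\epl}{\delta}GA^{-1}\E^{\mu^\epl}[Y^\epl\otimes\nabla f(Z^\epl)(X^\epl-Z^\epl)]=O(\epl)$, whereas the paper needs both parts of \cref{lem:magic_formulas} to see the cancellation. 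Second, the ``main obstacle'' you flag---mixing rates for $(X^\epl,Y^\epl,Z^\epl)$ uniform in $\epl$---is not needed for the iterated limit in the statement: for each fixed $\epl$ the hypoelliptic torus machinery (\cref{lem:Poisson_expansion}) gives the $t\to\infty$ limit with $\epl$-dependent constants, and only afterwards does one send $\epl\to 0$ on the already deterministic quantity $\theta^\epl_\infty$. Uniformity would matter only for a joint limit or a rate.
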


\begin{remark} \label{rem:compact_state_space}
The proofs of these theorems, which are outlined in \cref{sec:proofs}, are based on the additional assumption that the state space for equations \eqref{eq:system_multiD_additive} and \eqref{eq:limit_multiD_additive} is a compact phase space, namely the $d$-dimensional torus $\mathbb T^d$. In particular, we consider the wrapping of the stochastic processes on the torus \cite{GSM19}. Indeed the theoretical analysis, and especially \cref{lem:Poisson_expansion}, are based on the results for the Poisson problem presented in \cite[Section 4]{MST10}, in which the Poisson equation on the torus is considered. We believe that it should be possible, at the expense of introducing additional technical difficulties, to modify our proof so that it applies to the case where the state space is the whole $\R^d$. In this case the solution of the Poisson PDE and its derivatives are not expected to be bounded; we will need to control the time spent by the process outside a compact subset of the phase space and to use the results in \cite{PaV01,PaV03,PaV05}, see also~\cite{Wu_09}. The numerical experiments in \cref{sec:numerical_experiments} indeed show that our estimators work when the state space is $\R^d$. Since the focus of this paper is on parameter estimation, we chose to dispense with all these technical difficulties by considering the case where our processes are defined on the torus.
\end{remark}

The estimators introduced here for SDEs driven by colored additive noise will then be successfully applied to numerical examples in \cref{sec:num_additive}, where we will observe that the exact unknown parameter can be accurately approximated.

\section{Convergence analysis} \label{sec:proofs}

This section is devoted to the proofs of the main results presented in the previous section, i.e., \cref{thm:MLE_ok,pro:MLE_ko,thm:exp_ok,thm:exp_ok_eps,thm:SGDCT_ok,pro:SGDCT_ko,thm:SGDCT_exp_ok,thm:SGDCT_exp_ok_eps}, which show the asymptotic (un)biasedness of the proposed estimators. The convergence analysis is divided in three parts. We first study the ergodic properties of the stochastic processes under investigation together with the filtered data, then we study the infinite time limit of our SGDCT estimators, and, finally, we focus on the proofs of the main results of this work. We remark that $\norm{A}$ denotes the Frobenius norm of a matrix $A$ throughout this section. 

\subsection{Ergodic properties} \label{sec:ergodic}

We consider system \eqref{eq:system_multiD_additive} and the limit equation \eqref{eq:limit_multiD_additive} together with the additional equations given by the filtered data \eqref{eq:filtered_data}, i.e.,
\begin{equation} \label{eq:system_filtered_eps}
\begin{aligned}
\d X^\epl_t &= \theta f(X^\epl_t) \dd t + G \frac{Y^\epl_t}{\epl} \dd t, \\
\d Y^\epl_t &= - \frac{A}{\epl^2} Y^\epl_t \dd t + \frac{\sigma}{\epl} \dd W_t, \\
\d Z^\epl_t &= \frac1\delta (X^\epl_t - Z^\epl_t) \dd t,
\end{aligned}
\end{equation}
and
\begin{equation} \label{eq:system_filtered}
\begin{aligned}
\d X_t &= \theta f(X_t) \dd t + \sqrt{2D^S} \dd W_t, \\
\d Z_t &= \frac1\delta (X_t - Z_t) \dd t,
\end{aligned}
\end{equation}
respectively.
We first verify that the measures induced by the stochastic processes admit
smooth densities with respect to the Lebesgue measure. Since white noise is present only in one component, this is a consequence of the theory of hypoellipticity, as shown in the next lemma.

\begin{lemma} \label{lem:Lebesgue}
Let $\mu_t^\epl$ and $\mu_t$ be the measures at time $t$ induced by the joint processes $(X_t^\epl,Y_t^\epl,Z_t^\epl)$ and $(X_t,Z_t)$ given by equations \eqref{eq:system_filtered_eps} and \eqref{eq:system_filtered}, respectively. Then, the measures $\mu_t^\epl$ and $\mu_t$ admit smooth densities $\rho^\epl_t$ and $\rho_t$ with respect to the Lebesgue measure.
\end{lemma}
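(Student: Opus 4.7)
The plan is to invoke Hörmander's hypoellipticity theorem applied to the generators of the two augmented systems. Both joint processes are driven by degenerate noise (the Brownian motion enters only the $Y^\epl$ component of \eqref{eq:system_filtered_eps} and only the $X$ component of \eqref{eq:system_filtered}, while the filtering component $Z$ is purely deterministic given the history), so smoothness of the transition density does not follow from uniform ellipticity. Instead, I will verify that the Lie algebra generated by the diffusion and drift vector fields spans the whole tangent space at every point; Hörmander's theorem then yields smoothness of $\rho_t^\epl$ and $\rho_t$.

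\textbf{System \eqref{eq:system_filtered_eps}.} I would write the generator in Hörmander form with drift vector field
$$V_0 = \Bigl(\theta f(x) + \tfrac{1}{\epl} G y\Bigr) \cdot \nabla_x - \tfrac{1}{\epl^2} A y \cdot \nabla_y + \tfrac{1}{\delta}(x-z) \cdot \nabla_z,$$
and diffusion vector fields $V_j = \tfrac{1}{\epl}(\sigma e_j) \cdot \nabla_y$, $j=1,\dots,m$. Since $\Sigma = \sigma\sigma^\top$ is positive definite, the span of $\{V_1,\dots,V_m\}$ covers all of $\partial_{y_1},\dots,\partial_{y_n}$. Next, modulo the $y$-directions already obtained, the brackets $[V_j,V_0]$ produce vectors in the $x$-directions of the form $\tfrac{1}{\epl} G (\sigma e_j) \cdot \nabla_x$; full row rank of $G$ (needed for $D^S$ to be non-degenerate in the limit equation) allows us, by varying $j$ and taking linear combinations, to recover all $\partial_{x_k}$. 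Finally, a second-level bracket $[\partial_{x_k},V_0]$ gives $\tfrac{1}{\delta}\partial_{z_k}$ plus a vector already in the span of $\{\partial_{x_j}\}$, so the $z$-directions are captured as well. Hörmander's rank condition thus holds at every point of $\R^{d+n+d}$, the generator is hypoelliptic, and the law of $(X_t^\epl,Y_t^\epl,Z_t^\epl)$ admits a smooth density $\rho_t^\epl$.

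\textbf{System \eqref{eq:system_filtered}.} The analysis is analogous and simpler. The noise vector fields $V_j = (\sqrt{2 D^S}\, e_j)\cdot\nabla_x$ already span $\partial_{x_1},\dots,\partial_{x_d}$, using that $D^S$ is positive definite, and the single bracket $[\partial_{x_k},V_0]$ produces $\tfrac{1}{\delta}\partial_{z_k}$ modulo $\partial_x$-directions. Hörmander's condition is satisfied and the joint density $\rho_t$ is smooth.

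The only nontrivial part is the bracket chain in the first system, which encodes the fact that the noise has to ``propagate'' from $y$ to $x$ through one Lie bracket with the drift (and this is where the full row rank of $G$ is needed) and then from $x$ to $z$ via a second bracket, the latter being automatic thanks to the coupling $(x-z)/\delta$ in the filter equation. Once these rank conditions are verified, smoothness of $\rho_t^\epl$ and $\rho_t$ follows from the classical hypoelliptic theory (Hörmander's theorem, or equivalently Malliavin calculus applied to the strong solutions of \eqref{eq:system_filtered_eps} and \eqref{eq:system_filtered}).
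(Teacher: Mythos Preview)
Your proposal is correct and follows essentially the same route as the paper: both verify H\"ormander's bracket condition for the joint systems by propagating the noise first from the $y$-directions to the $x$-directions via one bracket with the drift, and then from the $x$-directions to the $z$-directions via a second bracket (exploiting the filter coupling $(x-z)/\delta$), and then invoke H\"ormander's theorem. Your write-up is in fact a bit more explicit than the paper's about the rank conditions (positive definiteness of $\Sigma$ and full row rank of $G$) that are implicitly needed for the brackets to span.
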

\begin{proof}
Let us first consider the system driven by colored noise. The generator of the joint process $(X_t^\epl, Y_t^\epl, Z_t^\epl)$ is
\begin{equation}
\begin{aligned}
\mathcal L^\epl &= \theta f(x) \cdot \nabla_x + \frac1\epl G y \cdot \nabla_x - \frac1{\epl^2} A y \cdot \nabla_y + \frac1\delta (x-z) \cdot \nabla_z + \frac1{2\epl^2} \sigma \sigma^\top \colon \nabla^2_{yy} \\
&\eqdef \mathcal X_0 + \frac1{2\epl^2} \sum_{i=1}^n \mathcal X_i^2,
\end{aligned}
\end{equation}
where
\begin{equation}
\begin{aligned}
\mathcal X_0 &= \theta f(x) \cdot \nabla_x + \frac1\epl G y \cdot \nabla_x - \frac1{\epl^2} A y \cdot \nabla_y + \frac1\delta (x-z) \cdot \nabla_z, \\
\mathcal X_i &= \sigma_i \cdot \nabla_y, \qquad i = 1, \dots, n,
\end{aligned}
\end{equation}
and where $\sigma_i$, $i = 1, \dots, n$, are the columns of the matrix $\sigma$. The commutator $[\mathcal X_0, \mathcal X_i]$ is
\begin{equation}
[\mathcal X_0, \mathcal X_i] = - \frac1\epl \sigma_i \cdot G^\top \nabla_x + \frac1{\epl^2} \sigma_i \cdot A^\top \nabla_y,
\end{equation}
and the commutator $[\mathcal X_0, [\mathcal X_0, \mathcal X_i]]$ is
\begin{equation}
[\mathcal X_0, [\mathcal X_0, \mathcal X_i]] = \frac1\epl \sigma_i \cdot G^\top \theta \nabla_x f(x) \nabla_x - \frac1{\epl^3} \sigma_i \cdot A^\top G^\top \nabla_x + \frac1{\epl\delta} \sigma_i \cdot G^\top \nabla_z + \frac1{\epl^4} \sigma_i \cdot (A^\top)^2 \nabla_y.
\end{equation}
Therefore, for any point $(x,y,z) \in \R^{2d+n}$, the set
\begin{equation}
\mathcal H = \mathrm{Lie}\left( \mathcal X_i, [\mathcal X_0, \mathcal X_i], [\mathcal X_0, [\mathcal X_0, \mathcal X_i]]; i = 1, \dots, n \right)
\end{equation}
spans the tangent space of $\R^{2d+n}$ at $(x,y,z)$. The result then follows from Hörmander’s theorem (see, e.g., \cite[Chapter 6]{Pav14}). The proof of hypoellipticity for the limit system $(X_t,Z_t)$ is similar and we omit the details.
\end{proof}

We are now interested in the limiting properties of the measures $\mu_t^\epl$ and $\mu_t$, and, in particular, in the stationary Fokker--Planck equations of the systems of SDEs. The next lemma guarantees that the joint processes $(X_t^\epl,Y_t^\epl,Z_t^\epl)$ and $(X_t,Z_t)$ are ergodic. 

\begin{lemma} \label{lem:FP}
Under \cref{as:ergodicity,as:assumption_delta}, the processes $(X_t^\epl,Y_t^\epl,Z_t^\epl)$ and $(X_t,Z_t)$ given by equations \eqref{eq:system_filtered_eps} and \eqref{eq:system_filtered}, are ergodic with unique invariant measures $\mu^\epl$ and $\mu$, whose densities $\rho^\epl$ and $\rho$ with respect to the Lebesgue measure solve the stationary Fokker--Planck equations
\begin{equation} \label{eq:FP_colored}
\begin{aligned}
- \nabla_x \cdot \left( \theta f(x) \rho^\epl(x,y,z) \right) - \frac1\epl \nabla_x \cdot \left( G y \rho^\epl(x,y,z) \right) - \frac1\delta \nabla_z \cdot \left( (x-z) \rho^\epl(x,y,z) \right) & \\
+ \frac1{\epl^2} \nabla_y \cdot \left( A y \rho^\epl(x,y,z) \right) + \frac{1}{2 \epl^2} \sigma \sigma^\top \colon \nabla_y^2 \rho^\epl(x,y,z) &= 0,
\end{aligned}
\end{equation}
and
\begin{equation} \label{eq:FP_limit}
- \nabla_x \cdot \left( \theta f(x) \rho(x,z) \right) - \frac1\delta \nabla_z \cdot \left( (x-z) \rho(x,z) \right) + D^S \colon \nabla_x^2 \rho(x,z) = 0,
\end{equation}
where $\colon$ denotes the Frobenius inner product between two matrices. Moreover, the measure $\mu^\epl$ converges weakly to $\mu$ as $\epl$ goes to zero.
\end{lemma}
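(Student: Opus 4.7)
The plan is to split the proof into three parts: (i) identifying the Fokker--Planck equations, (ii) proving existence and uniqueness of the invariant measures, and (iii) sending $\epl \to 0$. Step (i) is immediate from \cref{lem:Lebesgue}: the generators $\mathcal L^\epl$ and $\mathcal L$ are already computed there, and \eqref{eq:FP_colored}--\eqref{eq:FP_limit} are their formal $L^2$-adjoints; stationarity of $\mu^\epl$ and $\mu$ combined with the smoothness of $\rho^\epl$ and $\rho$ makes them classical solutions.

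For step (ii) I would try the quadratic Lyapunov Ansatz $V(x,y,z) = \frac{1}{2}(\|x\|^2 + \alpha\|y\|^2 + \beta\|z\|^2)$ with $\alpha, \beta > 0$ to be tuned. Applying $\mathcal L^\epl$, using the dissipativity bounds $\theta f(x) \cdot x \le \mathfrak a - \mathfrak b \|x\|^2$ and $Ay \cdot y \ge \mathfrak c \|y\|^2$ from \cref{as:ergodicity}, and bounding the cross terms $\frac{1}{\epl} Gy \cdot x$ and $\frac{1}{\delta}(x-z) \cdot z$ by Young's inequality, one should reach a drift condition of the form $\mathcal L^\epl V \le C_1(\epl) - c_x \|x\|^2 - c_y \|y\|^2/\epl^2 - c_z \|z\|^2$ with $c_x, c_y, c_z > 0$. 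The hypothesis $\delta > \mathfrak c / (2\mathfrak b \mathfrak c - \|G\|^2)$ from \cref{as:assumption_delta} is precisely what makes the feasible set of Young parameters non-empty so that all three diagonal coefficients are simultaneously negative. Together with the hypoellipticity of \cref{lem:Lebesgue}, this yields existence of $\mu^\epl$ by Krylov--Bogolyubov and uniqueness by a classical Doob-type argument. The limit system admits the analogous (and strictly simpler) argument with $V(x,z) = \frac{1}{2}(\|x\|^2 + \beta\|z\|^2)$, since the singular cross term is absent.

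For step (iii) I would lean on the path-level weak convergence $X^\epl \to X$ in $\mathcal C^0([0,T];\R^d)$ recalled in \cref{sec:setting}. Because the exponential filter $X \mapsto \frac{1}{\delta} e^{-\cdot/\delta} \ast X$ is a bounded linear map on path space, the continuous mapping theorem lifts this to $(X^\epl, Z^\epl) \to (X, Z)$ weakly. A standard tightness-plus-identification argument then concludes: tightness of the $(x,z)$-marginals of $\mu^\epl$ extracts a subsequential weak limit $\mu^*$; passing to the limit in the invariance identity using the path-space convergence makes $\mu^*$ invariant for the limiting $(X, Z)$-dynamics; and uniqueness from step (ii) forces $\mu^* = \mu$, so the whole family converges.

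The main obstacle is obtaining the uniform-in-$\epl$ tightness in step (iii): the naive Lyapunov bound carries a stationary constant of order $\alpha \operatorname{tr}(\sigma\sigma^\top)/(2\epl^2)$ coming from the Itô correction on $\|y\|^2$, giving only $O(1/\epl^2)$ moment estimates. My proposed workaround is to change variables to the corrected process $\widetilde X^\epl_t \defeq X^\epl_t + \epl G A^{-1} Y^\epl_t$; a direct Itô computation gives $\d \widetilde X^\epl_t = \theta f(X^\epl_t)\,\d t + G A^{-1} \sigma\,\d W_t$, removing the singular $\frac{1}{\epl}$ drift. Applying the Lyapunov argument to $(\widetilde X^\epl, Y^\epl, Z^\epl)$ then produces $\epl$-independent moment bounds, which transfer back to $X^\epl = \widetilde X^\epl - \epl G A^{-1} Y^\epl$ via the elementary stationary estimate $\E \|Y^\epl\|^2 = \operatorname{tr}(\Sigma_\infty)$.
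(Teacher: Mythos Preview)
Your steps (i) and (ii) match the paper's argument closely. The paper also uses the quadratic Lyapunov function $V(x,y,z)=\tfrac12(\|x\|^2+\|y\|^2+\|z\|^2)$ (with unit weights rather than your free parameters $\alpha,\beta$), bounds the cross terms $\tfrac1\epl Gy\cdot x$ and $\tfrac1\delta x\cdot z$ by Young's inequality with parameters $\gamma_1=\mathfrak c$ and $\gamma_2=1$, and observes that \cref{as:assumption_delta} is exactly what makes the coefficient of $\|x\|^2$ strictly negative. For the passage from dissipativity plus hypoellipticity to ergodicity the paper invokes \cite[Theorem 4.4]{MSH02} rather than spelling out Krylov--Bogolyubov and Doob, and it verifies the irreducibility condition \cite[Condition 4.3]{MSH02} by analogy with an example there; your sketch is the same argument in different clothing.

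The only genuine divergence is step (iii). The paper does not carry out any tightness-plus-identification argument: it simply cites \cite[Section 4]{BlP78} and \cite[Section 5.1]{Pav14} for the weak convergence $\mu^\epl\to\mu$. Your corrector substitution $\widetilde X^\epl_t=X^\epl_t+\epl GA^{-1}Y^\epl_t$, which indeed satisfies $\d\widetilde X^\epl_t=\theta f(X^\epl_t)\,\d t+GA^{-1}\sigma\,\d W_t$ and kills the singular drift, is a correct and pleasant way to manufacture $\epl$-uniform moment bounds, but it is more than what the paper does. If you want a self-contained proof your route works (modulo controlling the perturbation $\theta f(\widetilde X^\epl-\epl GA^{-1}Y^\epl)$ in the Lyapunov estimate, which needs the $\mathcal C^2$ regularity of $f$ from \cref{as:ergodicity}); if you are content to cite, the paper's reference to \cite{BlP78} suffices.
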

\begin{proof}
Let us first consider the system driven by colored noise. \cref{lem:Lebesgue} guarantees that the Fokker--Planck equation can be written directly from the system \eqref{eq:system_filtered_eps}. In order to prove ergodicity, consider the function
\begin{equation}
\mathcal S(x,y,z) = \begin{pmatrix} \theta f(x) + \frac1\epl Gy \\ -\frac1{\epl^2} Ay \\ \frac1\delta (x-z) \end{pmatrix} \cdot \begin{pmatrix} x \\ y \\ z \end{pmatrix} = \theta f(x) \cdot x + \frac1\epl G y \cdot x - \frac1{\epl^2} A y \cdot y + \frac1\delta x \cdot z - \frac1\delta \norm{z}^2.
\end{equation}
Due to \cref{as:ergodicity} and by Young's inequality we then have for all $\gamma_1,\gamma_2>0$
\begin{equation}
\mathcal S(x,y,z) \le \mathfrak a - \left( \mathfrak b - \frac{\norm{G}^2}{2\gamma_1} - \frac{1}{2\delta\gamma_2} \right) \norm{x}^2 - \frac1{\epl^2} \left( \mathfrak c - \frac{\gamma_1}2 \right) \norm{y}^2 - \frac1\delta \left( 1 - \frac{\gamma_2}2 \right) \norm{z}^2.
\end{equation}
Choosing $\gamma_1 = \mathfrak c$ and $\gamma_2 = 1$ we get
\begin{equation}
\begin{aligned}
\mathcal S(x,y,z) &\le \mathfrak a - \left( \mathfrak b - \frac{\norm{G}^2}{2 \mathfrak c} - \frac1{2\delta} \right) \norm{x}^2 - \frac{\mathfrak c}{2 \epl^2} \norm{y}^2 - \frac1{2\delta} \norm{z}^2 \\
&\le \mathfrak a - \min \left\{ \mathfrak b - \frac{\norm{G}^2}{2 \mathfrak c} - \frac1{2\delta}, \frac{\mathfrak c}{2 \epl^2}, \frac1{2\delta} \right\} \left( \norm{x}^2 + \norm{y}^2 + \norm{z}^2 \right),
\end{aligned}
\end{equation}
where the coefficient in front of the norm of $x$ is positive due to condition \cref{as:assumption_delta}, and which shows that the dissipativity assumption is satisfied. It remains to prove the irreducibility condition \cite[Condition 4.3]{MSH02}. We remark that a similar argument to the one in the example at the end of \cite[Page 199]{MSH02} can be carried out, and therefore \cite[Condition 4.3]{MSH02} is satisfied. Ergodicity then follows from \cite[Theorem 4.4]{MSH02}. The proof for the limit system $(X_t,Z_t)$ is analogous, and therefore we omit the details. Finally, the weak convergence of the measure $\mu^\epl$ to $\mu$ is given by, e.g., \cite[Section 4]{BlP78} or \cite[Section 5.1]{Pav14}.
\end{proof}

We are now ready to state important formulas that link expectations of different quantities with respect to the invariant measure, and which will be employed in the proof of the main theorems.

\begin{lemma} \label{lem:magic_formula_d1}
Let $d=1$, then it holds
\begin{equation}
\frac1\epl G \E^{\mu^\epl} \left[ Y^\epl \otimes f(X^\epl) \right] = - \theta \E^{\mu^\epl} \left[ f(X^\epl) \otimes f(X^\epl) \right].
\end{equation}
\end{lemma}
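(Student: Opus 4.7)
The plan is to exploit the one-dimensional setting by choosing a clever test function. In $d=1$ any continuous $f$ admits a scalar primitive
\begin{equation*}
F(x) \defeq \int_0^x f(u)\dd u, \qquad F'(x) = f(x),
\end{equation*}
an object that is not available in higher dimensions (there $f$ would have to be a gradient). This primitive is exactly what is needed to relate $f(X^\epl)^2$ to $Y^\epl f(X^\epl)$.

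Next, I would apply Itô's formula — or really just the ordinary chain rule, since by \eqref{eq:system_multiD_additive} the component $X^\epl$ has finite variation (the Brownian motion enters only through $Y^\epl$) — to obtain
\begin{equation*}
\d F(X^\epl_t) = f(X^\epl_t)\dd X^\epl_t = \theta f(X^\epl_t)^2 \dd t + \frac{G}{\epl} Y^\epl_t f(X^\epl_t) \dd t.
\end{equation*}
Starting $(X^\epl_0,Y^\epl_0,Z^\epl_0)$ from $\mu^\epl$, the joint process is stationary by \cref{lem:FP}, so $t \mapsto \E^{\mu^\epl}[F(X^\epl_t)]$ is constant in $t$; differentiating at $t=0$ gives
\begin{equation*}
0 = \theta\, \E^{\mu^\epl}\bigl[f(X^\epl)^2\bigr] + \frac{G}{\epl}\,\E^{\mu^\epl}\bigl[Y^\epl f(X^\epl)\bigr],
\end{equation*}
which is precisely the claimed identity (in $d=1$ the outer product $\otimes$ reduces to the usual product). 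An equivalent derivation, which may be preferable for making all decay conditions explicit, multiplies the stationary Fokker--Planck equation \eqref{eq:FP_colored} by $F(x)$ and integrates over $\R^{2+n}$: the $y$- and $z$-derivative pieces contribute nothing after integration by parts because $F$ does not depend on $y,z$, and the $x$-derivative terms yield exactly the displayed relation.

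The main technical hurdle is not conceptual but bookkeeping: one must verify that $F(X^\epl)$, $f(X^\epl)^2$, and $Y^\epl f(X^\epl)$ are integrable under $\mu^\epl$ and that the boundary terms in the integration by parts vanish. Both follow from the dissipativity in \cref{as:ergodicity}, which together with the hypoellipticity and smoothness of $\rho^\epl$ proved in \cref{lem:Lebesgue,lem:FP} yields Gaussian-type tails for $\mu^\epl$, easily absorbing the polynomial growth of $F$ inherited from $f\in\mathcal C^2(\R)$ (a Lyapunov argument based on $\mathcal S$ from the proof of \cref{lem:FP}, applied to the test function $1+x^{2k}$, gives the moment bounds that are needed). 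No further ingredients are required.
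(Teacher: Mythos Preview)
Your proposal is correct and essentially matches the paper's proof: the paper defines the same primitive $F(x)=\int_0^x f(t)\dd t$ and multiplies the stationary Fokker--Planck equation \eqref{eq:FP_colored} by $1\otimes F(x)$, integrating by parts to obtain the identity. Your primary route via It\^o's formula and stationarity is the probabilistic dual of this, and you already note the Fokker--Planck version as an equivalent derivation.
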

\begin{proof}
Let $F \colon \R \to \R^\ell$ be a primitive of $f$ defined by
\begin{equation}
F(x) = \int_0^x f(t) \dd t.
\end{equation}
Due to \cref{lem:FP}, multiplying equation \eqref{eq:FP_colored} by $1 \otimes F(x)$, integrating over $\R^{2d+n}$ and then by parts, and noting that
\begin{equation}
\begin{aligned}
\int_{\R^{2d+n}} 1 \otimes F(x) \nabla_x \cdot \left( \theta f(x) \rho^\epl(x,y,z) \right) &= - \theta \int_{\R^{2d+n}} f(x) \otimes f(x) \rho^\epl(x,y,z), \\
\frac1\epl \int_{\R^{2d+n}} 1 \otimes F(x) \nabla_x \cdot \left( G y \rho^\epl(x,y,z) \right) &= - \frac1\epl G \int_{\R^{2d+n}} y \otimes f(x) \rho^\epl(x,y,z),
\end{aligned}
\end{equation}
we get the desired result.
\end{proof}

\begin{lemma} \label{lem:magic_formulas}
The following equalities hold true
\begin{equation}
\begin{aligned}
(i)& \quad \frac1\epl G \E^{\mu^\epl} \left[ Y^\epl \otimes f(Z^\epl) \right] = - \theta \E^{\mu^\epl} \left[ f(X^\epl) \otimes f(Z^\epl) \right] - \frac1\delta \E^{\mu^\epl} \left[ X^\epl \otimes \nabla f(Z^\epl) (X^\epl - Z^\epl) \right], \\
(ii)& \quad \frac1\delta \E^{\mu} \left[ X \otimes \nabla f(Z) (X - Z) \right] = - \theta \E^{\mu} \left[ f(X) \otimes f(Z) \right].
\end{aligned}
\end{equation}
\end{lemma}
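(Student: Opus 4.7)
The plan is to mimic the proof of \cref{lem:magic_formula_d1}: test the stationary Fokker--Planck equations against an appropriately chosen function and integrate by parts. The key observation is that while in the one-dimensional case we could avoid introducing $x$ as a multiplier by working with a primitive $F$ of $f$, here (since we are pairing with $f(z)$ rather than $f(x)$) the natural test function is simply $x \otimes f(z)$, componentwise $x_i f(z)_j$. This function has no $y$-dependence, so all terms involving $\nabla_y$ or $\nabla_y^2$ will disappear after integration by parts, and it is linear in $x$, which will kill the diffusion term in the limit equation.

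For $(i)$, I would multiply equation \eqref{eq:FP_colored} entrywise by $x_i f(z)_j$ and integrate over $\R^{2d+n}$. The two terms $\frac1{\epl^2}\nabla_y\cdot(Ay\rho^\epl)$ and $\frac1{2\epl^2}\sigma\sigma^\top:\nabla_y^2 \rho^\epl$ give zero contribution after integrating by parts in $y$, since the test function is $y$-independent. The remaining three terms yield, after one integration by parts in $x$ and $z$ respectively,
\begin{equation*}
\int (\theta f(x))_i f(z)_j \rho^\epl + \frac1\epl \int (Gy)_i f(z)_j \rho^\epl + \frac1\delta \int x_i (\nabla f(z)(x-z))_j \rho^\epl = 0,
\end{equation*}
where the third comes from $\partial_{z_k}(x_i f(z)_j)=x_i(\nabla f(z))_{jk}$. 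Rewriting this identity in tensor notation and solving for $\frac1\epl G\,\E^{\mu^\epl}[Y^\epl\otimes f(Z^\epl)]$ gives exactly $(i)$.

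For $(ii)$, the same procedure applied to \eqref{eq:FP_limit} works. The drift-in-$x$ and drift-in-$z$ terms produce the same contributions as above (with $\mu$ in place of $\mu^\epl$), and the diffusion term $D^S:\nabla_x^2\rho$ vanishes after two integrations by parts in $x$, because $\nabla_x^2(x_i f(z)_j)=0$. This leaves the identity $\theta\,\E^\mu[f(X)\otimes f(Z)] + \tfrac1\delta\,\E^\mu[X\otimes\nabla f(Z)(X-Z)]=0$, which is $(ii)$.

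The only mild technicality is the justification of the integration by parts, namely that all boundary terms at infinity vanish. This follows from \cref{lem:FP} together with the dissipativity structure of the invariant measure: the Gaussian tails in $y$ (from the Ornstein--Uhlenbeck component) and the super-Gaussian/exponential decay of $\rho^\epl$ and $\rho$ in $(x,z)$ implied by the Lyapunov function $\mathcal S$ constructed in the proof of \cref{lem:FP} dominate the polynomial growth of the test functions $x_i f(z)_j$ (and of $(Gy)_i$, $(x-z)_k$, etc.), under the regularity $f\in\mathcal C^2$ from \cref{as:ergodicity}. No new ideas are needed beyond those already used in \cref{lem:magic_formula_d1}.
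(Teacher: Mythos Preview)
Your proposal is correct and matches the paper's own proof essentially line for line: the paper also tests \eqref{eq:FP_colored} and \eqref{eq:FP_limit} against $x\otimes f(z)$ and integrates by parts, noting exactly the three contributions you list for $(i)$ and invoking the analogous computation for $(ii)$. Your write-up is in fact slightly more detailed than the paper's, which does not spell out the vanishing of the $y$-terms or of the diffusion term.
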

\begin{proof}
Let us first consider point $(i)$. Due to \cref{lem:FP}, multiplying equation \eqref{eq:FP_colored} by $x \otimes f(z)$, integrating over $\R^{2d+n}$ and then by parts, and noting that
\begin{equation}
\begin{aligned}
\int_{\R^{2d+n}} x \otimes f(z) \nabla_x \cdot \left( \theta f(x) \rho^\epl(x,y,z) \right) &= - \theta \int_{\R^{2d+n}} f(x) \otimes f(z) \rho^\epl(x,y,z), \\
\frac1\epl \int_{\R^{2d+n}} x \otimes f(z) \nabla_x \cdot \left( G y \rho^\epl(x,y,z) \right) &= - \frac1\epl G \int_{\R^{2d+n}} y \otimes f(z) \rho^\epl(x,y,z), \\
\frac1\delta \int_{\R^{2d+n}} x \otimes f(z) \nabla_z \cdot \left( (x-z) \rho^\epl(x,y,z) \right) &= - \frac1\delta \int_{\R^{2d+n}} x \otimes \nabla f(z) (x - z) \rho^\epl(x,y,z),
\end{aligned}
\end{equation}
we get the desired result. Analogously, multiplying equation \eqref{eq:FP_limit} by $x \otimes f(z)$, integrating over $\R^{2d}$ and then by parts, we obtain point $(ii)$, which concludes the proof.
\end{proof}

\subsection{Infinite time limit of SGDCT}

In this section we show that the SGDCT estimator behaves like the MLE estimator in the infinite time limit. The first result is a technical lemma which will be required later. It is based on \cite[Theorem 4.1]{SiS20} and we restate it in our setting for clarity of exposition. 

\begin{lemma} \label{lem:Poisson_expansion}
Let $\mathcal L^\epl$ and $\mathcal L$ be the generators of the processes given by the SDEs \eqref{eq:system_filtered_eps} and \eqref{eq:system_filtered}, respectively. Let $h^\epl \colon \mathbb T^{2d+n} \to \R^P$ and $h \colon \mathbb T^{2d} \to \R^P$ be functions in $W^{k,\infty}$ with $k,P \in \N$, $k \ge 2$, such that
\begin{equation}
\E^{\mu^\epl} \left[ h(X^\epl, Y^\epl, Z^\epl) \right] = 0 \qquad \text{and} \qquad \E^\mu \left[ h(X, Z) \right] = 0.
\end{equation}
Then, there exist unique solutions $\psi^\epl \colon \T^{2d+n} \to \R^P$ and $\psi \colon \T^{2d} \to \R^P$ in $W^{k,\infty}$ of the Poisson problems
\begin{equation}
\mathcal L^\epl \psi^\epl(x,y,z) = h^\epl(x,y,z), \qquad \mathcal L \psi(x,z) = h(x,z).
\end{equation}
\end{lemma}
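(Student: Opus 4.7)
The plan is to invoke the Poisson PDE theory for hypoelliptic operators on compact manifolds (namely \cite[Theorem 4.1]{SiS20}, which in turn relies on \cite[Section 4]{MST10}), after verifying that the three standard hypotheses are in place for both operators: (a) hypoellipticity of the generator, (b) existence and uniqueness of an invariant measure, and (c) the centering condition on the right-hand side.

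First, I would check that $\mathcal L^\epl$ and $\mathcal L$ are hypoelliptic. This is precisely the content of \cref{lem:Lebesgue} and the analogous Hörmander bracket computation for the limit system; the Lie algebra generated by the noise vector fields and their commutators with the drift spans the tangent space at every point of $\T^{2d+n}$ (respectively $\T^{2d}$). Since we are on the torus the coefficients of the vector fields $\mathcal X_i$ are smooth and bounded, so no technicalities from unbounded state space arise, which is the reason for restricting to the compact setting (cf.\ \cref{rem:compact_state_space}).

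Next, I would verify that the processes wrapped on the torus admit unique invariant measures $\mu^\epl$ and $\mu$. This follows by repeating the argument of \cref{lem:FP}: on the compact torus the Lyapunov/dissipativity condition is automatic, and the hypoellipticity from the previous step provides the required irreducibility and strong Feller properties \cite[Theorem 4.4]{MSH02}. The centering conditions $\E^{\mu^\epl}[h^\epl]=0$ and $\E^{\mu}[h]=0$ are assumed in the statement, which is exactly the solvability condition (the Fredholm alternative) for the Poisson equation with a hypoelliptic generator whose kernel consists of the constants.

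Having established (a)--(c), \cite[Theorem 4.1]{SiS20} (applied componentwise to $h^\epl$ and $h$) yields the existence of solutions $\psi^\epl$ and $\psi$ to the respective Poisson equations. Uniqueness is understood modulo additive constants, which can be fixed by imposing the normalization $\E^{\mu^\epl}[\psi^\epl]=0$ and $\E^{\mu}[\psi]=0$. The $W^{k,\infty}$ regularity of the solutions is inherited from the $W^{k,\infty}$ regularity of the right-hand side by hypoelliptic regularity theory as stated in the cited theorem. The main subtlety is the need to work on the torus so that bounds on $\psi^\epl$ and its derivatives, given by \cite{MST10}, are uniform in $x,y,z$; extending the result to $\R^d$, as discussed in \cref{rem:compact_state_space}, would require controlling the tails via the techniques of \cite{PaV01,PaV03,PaV05}, but this is not needed here.
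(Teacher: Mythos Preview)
Your proposal is correct and follows essentially the same approach as the paper: the paper's proof is a single sentence invoking \cite[Theorem 4.1]{SiS20} together with the hypoellipticity established in \cref{lem:Lebesgue}. Your write-up simply unpacks the hypotheses of that theorem (hypoellipticity, unique invariant measure, centering condition) in more detail than the paper does, but the logical structure and the cited result are identical.
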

\begin{proof}
The result follows from \cite[Theorem 4.1]{SiS20} noting that the hypoellipticity condition is guaranteed by \cref{lem:Lebesgue}.
\end{proof}

\begin{remark} \label{rem:compact_state_space_2}
We recall that we are working under the assumption that the state space is compact, as stated in \cref{rem:compact_state_space}. Replacing the torus with the whole space would involve different technicalities, especially for proving that the functions in \cref{lem:Poisson_expansion} and their derivatives are bounded. We believe that the needed results follow from the results presented in \cite{PaV01,PaV03,PaV05}; but we do not study this extension here.
\end{remark}

Before moving to the limiting properties, we first need to control the moments of the SGDCT estimators. We show in the next lemma that the moments are bounded uniformly in time.

\begin{lemma} \label{lem:bounded_moments}
Let the estimators $\widetilde \theta_t, \widetilde \theta_t^\epl, \widetilde \theta_{\mathrm{exp},t}^\delta, \widetilde \theta_{\mathrm{exp},t}^{\delta,\epl}$ be defined by the SDEs \eqref{eq:SDE_base_ok}, \eqref{eq:SDE_base_ko}, \eqref{eq:SDE_exp_ok}, \eqref{eq:SDE_exp_ok_eps}. Under \cref{as:ergodicity,as:assumption_delta,as:positive_definite}, for all $p\ge1$ there exists a constant $C>0$ independent of time, such that following bounds hold
\begin{equation}
\begin{aligned}
&(i) \quad \E \left[ \norm{\widetilde \theta_t}^p \right] \le C, \qquad &&(ii) \quad \E \left[ \norm{\widetilde \theta_t^\epl}^p \right] \le C, \\
&(iii) \quad \E \left[ \norm{\widetilde \theta_{\mathrm{exp},t}^\delta}^p \right] \le C, \qquad &&(iv) \quad \E \left[ \norm{\widetilde \theta_{\mathrm{exp},t}^{\delta,\epl}}^p \right] \le C.
\end{aligned}
\end{equation}
\end{lemma}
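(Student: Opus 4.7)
The plan is to treat the four estimators in parallel by organizing them into two groups: observations from the limit equation (items~(i) and~(iii)) versus observations from the colored-noise system (items~(ii) and~(iv)). Within each group, the raw and filtered versions share the same structure — matrix-valued random (S)DEs driven by a learning rate $\xi_t = a/(b+t)\in L^2(0,\infty)$ — and differ only by whether the sensitivity factor is $f(X_t)$ or $f(Z_t)$. By \cref{rem:compact_state_space} I work on the torus, which makes $f$ smooth and bounded along trajectories and, since $Z_t$ is an exponentially weighted running average of a bounded process, also $f(Z_t)$ bounded; the same applies to the colored-noise versions $X_t^\epl$ and $Z_t^\epl$.

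For items~(i) and~(iii) I will set $e_t = \widetilde\theta_t - \theta$ (resp.\ $e_t = \widetilde\theta_{\mathrm{exp},t}^\delta - \theta$). A direct computation from \eqref{eq:SDE_base_ok} (resp.\ \eqref{eq:SDE_exp_ok}) gives
\[
\d e_t = -\xi_t\, e_t\bigl[f(X_t)\otimes \phi_t\bigr]\,\d t + \xi_t \sqrt{2D^S}\,\d W_t \otimes \phi_t,
\]
with $\phi_t\in\{f(X_t),\, f(Z_t)\}$. Applying Itô's formula to the Lyapunov function $V(e)=(1+\|e\|^2)^p$, the Itô correction is controlled by $O(\xi_t^2)(1+V(e_t))$ using the boundedness of $\phi_t$. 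For case~(i) the drift term reduces to $-2p\xi_t(1+\|e_t\|^2)^{p-1}\,f(X_t)^\top (e_t^\top e_t) f(X_t)\le 0$ and may be discarded, so that Grönwall and $\int_0^\infty\xi_s^2\,\d s=a^2/b<\infty$ immediately yield the uniform bound. For case~(iii) the drift $-2p\xi_t(1+\|e_t\|^2)^{p-1} f(X_t)^\top (e_t^\top e_t) f(Z_t)$ has no definite sign, and a naive bound produces a non-integrable $\xi_t$ factor giving only polynomial growth. To recover uniformity in $t$, I will follow the Lyapunov construction of \cite[Section~4]{SiS20} and augment $V$ by a Poisson correction $V(e) + \xi_t\,\chi(e,x,z)$, where $\chi$ solves the Poisson equation $\mathcal L\chi(\cdot,x,z) = f(x)\otimes f(z) - \E^\mu[f(X)\otimes f(Z)]$, whose well-posedness and boundedness are supplied by \cref{lem:Poisson_expansion}. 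The drift of the augmented Lyapunov function then inherits a coercive contribution $-c\xi_t V$ with $c>0$ from \cref{as:positive_definite}(ii), leaving only $\xi_t^2$-weighted remainders; Grönwall closes the argument.

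For items~(ii) and~(iv) the error $e_t^\epl$ satisfies a random ODE whose forcing contains the $O(1/\epl)$ term $\xi_t(G/\epl)Y_t^\epl\otimes \phi_t^\epl$, with $\phi_t^\epl\in\{f(X_t^\epl),\,f(Z_t^\epl)\}$. I will exploit the exact Poisson identity $\mathcal L^\epl\psi^\epl = y/\epl$ solved by $\psi^\epl(y) = -\epl A^{-1}y$, or equivalently
\[
(Y_s^\epl/\epl)\,\d s = -\epl A^{-1}\d Y_s^\epl + A^{-1}\sigma\,\d W_s,
\]
to rewrite the problematic forcing as a genuine Itô integral against $W$ (controlled in $L^p$ uniformly via BDG and $\xi\in L^2$), plus boundary terms of order $\epl\|Y_t^\epl\|$ (harmless because $\sup_{s\ge 0}\E\|Y_s^\epl\|^p<\infty$ uniformly in $s$), plus a further integral involving $\d[\xi_s\phi_s^\epl]$. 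This last integral contains a reappearance of $(G/\epl)Y_s^\epl$ coming from $\d X_s^\epl$, which I will tame by a second Poisson/integration-by-parts step: the new $\epl$ prefactor cancels the new $1/\epl$, producing contributions that are uniformly bounded in $t$. The resulting effective equation for $e_t^\epl$ is a perturbation of order $\epl$ of its limit-equation counterpart, and the Lyapunov argument of the previous paragraph applies verbatim (with $\mu$ replaced by $\mu^\epl$, cf.\ \cref{rem:positive_definite}) to give the uniform-in-$t$ bound for each fixed $\epl>0$.

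The main obstacle is the construction of the augmented Lyapunov function in case~(iv), where one must simultaneously handle (a) the non-stationary correlation between $e_t^\epl$ and $(X_t^\epl,Y_t^\epl,Z_t^\epl)$, which is what prevents the naive Grönwall from giving a uniform bound, and (b) the colored-noise correction of order $1/\epl$ inside the same Poisson equation. Both difficulties are enabled by \cref{lem:Poisson_expansion} together with the compact state-space hypothesis of \cref{rem:compact_state_space}, which guarantees bounded solutions of the required Poisson problems on the torus.
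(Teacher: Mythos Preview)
Your strategy is viable and close in spirit to the paper's, but the mechanics differ in two places. The paper does not run a Lyapunov argument on the error directly; instead it applies the Ikeda--Watanabe comparison theorem to dominate $\|\widetilde\theta^{\delta,\epl}_{\mathrm{exp},t}\|$ by the norm of an auxiliary process $\widetilde\Theta^{\delta,\epl}_{\mathrm{exp},t}$ obtained by replacing the random damping $\E^{\mu^\epl}[f(X^\epl)\otimes f(Z^\epl)]$ by the scalar $K$ of \cref{as:positive_definite}, and then works with the explicit variation-of-constants formula for $\widetilde\Theta$. The Poisson corrector of \cref{lem:Poisson_expansion} is applied, via It\^o's formula on $(b+t)^{aK-1}\widetilde\Theta\,\psi^\epl$, only to the centred fluctuation $f(x)\otimes f(z)-\E^{\mu^\epl}[f(X^\epl)\otimes f(Z^\epl)]$, after which Gr\"onwall on the $2q$-th moment closes. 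Notably the paper never integrates $Y^\epl/\epl$ by parts: since the torus hypothesis makes $Y^\epl$ bounded, the forcing $\tfrac{1}{\epl}\int_0^t\xi_s e^{-K\int_s^t\xi_r\d r}GY^\epl_s\otimes f(Z^\epl_s)\,\d s$ is estimated directly by $C(\epl)\int_0^t(b+s)^{aK-1}(b+t)^{-aK}\d s\le C(\epl)/(aK)$, uniformly in $t$, and absorbed into the constant. Your explicit IBP would buy an $\epl$-uniform constant the lemma does not request. Conversely, your treatment of~(i)---dropping the drift by its sign and closing on $\int\xi^2<\infty$ alone---is more elementary than the paper, which runs~(i) through the same comparison/Poisson route as the filtered cases.

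One caveat on your side: the claim that after the two IBP steps ``the resulting effective equation for $e^\epl_t$ is a perturbation of order $\epl$ of its limit-equation counterpart'' holds for~(iv), where $\d f(Z^\epl_t)=\nabla f(Z^\epl_t)\tfrac{1}{\delta}(X^\epl_t-Z^\epl_t)\,\d t$ carries no $1/\epl$, but fails for~(ii). There $\d f(X^\epl_t)$ reintroduces $(G/\epl)Y^\epl_t$, and the second step leaves an $O(\xi_t)$---not $O(\epl\xi_t)$---remainder $\xi_t\,GA^{-1}Y^\epl_t\otimes\nabla f(X^\epl_t)GY^\epl_t$, whose stationary mean is generically nonzero. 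Feeding that into the sign-only argument of~(i) yields $(\log t)^{2p}$ growth, not a uniform bound. The repair is immediate---use the augmented-Lyapunov/Poisson coercivity of your~(iii) step, now with \cref{as:positive_definite}(i), for case~(ii) as well---but ``applies verbatim'' over-promises as written.
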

\begin{proof}
We only give full details for $(iv)$, and then we outline the differences with respect to $(ii)$. Let us first show $(iv)$. We consider the SDE for the estimator \eqref{eq:SDE_exp_ok_eps} and we rewrite it as
\begin{equation}
\begin{aligned}
\d \widetilde \theta_{\mathrm{exp},t}^{\delta,\epl} &= - \xi_t \widetilde \theta_{\mathrm{exp},t}^{\delta,\epl} \E^{\mu^\epl} \left[ f(X^\epl) \otimes f(Z^\epl) \right] \dd t + \xi_t \theta f(X_t^\epl) \otimes f(Z_t^\epl) \dd t + \frac1\epl \xi_t G Y_t^\epl \otimes f(Z_t^\epl) \dd t\\
&\quad - \xi_t \widetilde \theta_{\mathrm{exp},t}^{\delta,\epl} \left( f(X_t^\epl) \otimes f(Z_t^\epl) - \E^{\mu^\epl} \left[ f(X^\epl) \otimes f(Z^\epl) \right] \right) \dd t,
\end{aligned}
\end{equation}
then by Itô's lemma we obtain
\begin{equation}
\begin{aligned}
\d \norm{\widetilde \theta_{\mathrm{exp},t}^{\delta,\epl}} &= - \xi_t \widetilde \theta_{\mathrm{exp},t}^{\delta,\epl} \E^{\mu^\epl} \left[ f(X^\epl) \otimes f(Z^\epl) \right] \colon \frac{\widetilde \theta_{\mathrm{exp},t}^{\delta,\epl}}{\norm{\widetilde \theta_{\mathrm{exp},t}^{\delta,\epl}}} \dd t + \xi_t \theta f(X_t^\epl) \otimes f(Z_t^\epl) \colon \frac{\widetilde \theta_{\mathrm{exp},t}^{\delta,\epl}}{\norm{\widetilde \theta_{\mathrm{exp},t}^{\delta,\epl}}} \dd t \\
&\quad - \xi_t \widetilde \theta_{\mathrm{exp},t}^{\delta,\epl} \left( f(X_t^\epl) \otimes f(Z_t^\epl) - \E^{\mu^\epl} \left[ f(X^\epl) \otimes f(Z^\epl) \right] \right) \colon \frac{\widetilde \theta_{\mathrm{exp},t}^{\delta,\epl}}{\norm{\widetilde \theta_{\mathrm{exp},t}^{\delta,\epl}}} \dd t \\
&\quad + \frac1\epl \xi_t G Y_t^\epl \otimes f(Z_t^\epl) \colon \frac{\widetilde \theta_{\mathrm{exp},t}^{\delta,\epl}}{\norm{\widetilde \theta_{\mathrm{exp},t}^{\delta,\epl}}} \dd t, 
\end{aligned}
\end{equation}
where we recall that $\norm{\cdot}$ denotes the Frobenius norm of a matrix. Let us also consider the process $\widetilde \Theta_{\mathrm{exp},t}^{\delta,\epl}$ which satisfies the SDE
\begin{equation}
\begin{aligned}
\d \widetilde \Theta_{\mathrm{exp},t}^{\delta,\epl} &= - \xi_t K \widetilde \Theta_{\mathrm{exp},t}^{\delta,\epl} \dd t + \xi_t \theta f(X_t^\epl) \otimes f(Z_t^\epl) \dd t + \frac1\epl \xi_t G Y_t^\epl \otimes f(Z_t^\epl) \dd t \\
&\quad - \xi_t \widetilde \Theta_{\mathrm{exp},t}^{\delta,\epl} \left( f(X_t^\epl) \otimes f(Z_t^\epl) - \E^{\mu^\epl} \left[ f(X^\epl) \otimes f(Z^\epl) \right] \right) \dd t,
\end{aligned}
\end{equation}
where $K$ is given by \cref{as:positive_definite}, and note that by Itô's lemma we get
\begin{equation}
\begin{aligned}
\d \norm{\widetilde \Theta_{\mathrm{exp},t}^{\delta,\epl}} &= - \xi_t K \norm{\widetilde \Theta_{\mathrm{exp},t}^{\delta,\epl}} \dd t + \xi_t \theta f(X_t^\epl) \otimes f(Z_t^\epl) \colon \frac{\widetilde \Theta_{\mathrm{exp},t}^{\delta,\epl}}{\norm{\widetilde \Theta_{\mathrm{exp},t}^{\delta,\epl}}} \dd t \\
&\quad - \xi_t \widetilde \Theta_{\mathrm{exp},t}^{\delta,\epl} \left( f(X_t^\epl) \otimes f(Z_t^\epl) - \E^{\mu^\epl} \left[ f(X^\epl) \otimes f(Z^\epl) \right] \right) \colon \frac{\widetilde \Theta_{\mathrm{exp},t}^{\delta,\epl}}{\norm{\widetilde \Theta_{\mathrm{exp},t}^{\delta,\epl}}} \dd t \\
&\quad + \frac1\epl \xi_t G Y_t^\epl \otimes f(Z_t^\epl) \colon \frac{\widetilde \Theta_{\mathrm{exp},t}^{\delta,\epl}}{\norm{\widetilde \Theta_{\mathrm{exp},t}^{\delta,\epl}}} \dd t.
\end{aligned}
\end{equation}
Due to \cref{as:positive_definite} and since the drift and diffusion functions in the previous SDEs are continuous if $\norm{\widetilde \theta_{\mathrm{exp},t}^{\delta,\epl}}, \norm{\widetilde \Theta_{\mathrm{exp},t}^{\delta,\epl}} > R$ for any $R>0$, applying the comparison theorem \cite[Theorem 1.1]{IkW77} we deduce that
\begin{equation} \label{eq:comparison}
\Pr \left( \norm{\widetilde \theta_{\mathrm{exp},t}^{\delta,\epl}} \le \norm{\widetilde \Theta_{\mathrm{exp},t}^{\delta,\epl}}, t \ge 0 \right) = 1.
\end{equation}
Moreover, the process $\widetilde \Theta_{\mathrm{exp},t}^{\delta,\epl}$ can be written as the solution of the integral equation
\begin{equation} \label{eq:moments_intergral_equation}
\begin{aligned}
\widetilde \Theta_{\mathrm{exp},t}^{\delta,\epl} &= e^{-K \int_0^t \xi_r \dd r} \widetilde \Theta_{\mathrm{exp},0}^{\delta,\epl} + \int_0^t \xi_s e^{-K \int_s^t \xi_r \dd r} \theta f(X_s^\epl) \otimes f(Z_s^\epl) \dd s \\
&\quad - \int_0^t \xi_s e^{-K \int_s^t \xi_r \dd r} \widetilde \Theta_{\mathrm{exp},s}^{\delta,\epl} \left( f(X_s^\epl) \otimes f(Z_s^\epl) - \E^{\mu^\epl} \left[ f(X^\epl) \otimes f(Z^\epl) \right] \right) \dd s \\
&\quad + \frac1\epl \int_0^t \xi_s e^{-K \int_s^t \xi_r \dd r} G Y_s^\epl \otimes f(Z_s^\epl) \dd s,
\end{aligned}
\end{equation}
and notice that, from the definition of the learning rate in equation \eqref{eq:learning_rate}, we have
\begin{equation}\label{eq:learn_rate_integral}
e^{-K \int_s^t \xi_r \dd r} = \left( \frac{b+s}{b+t} \right)^{aK}.
\end{equation}
Using now \cref{lem:Poisson_expansion} for the function
\begin{equation}
h^\epl(x,y,z) = f(x) \otimes f(z) - \E^{\mu^\epl} \left[ f(X^\epl) \otimes f(Z^\epl) \right],
\end{equation}
we deduce that the solution to the Poisson equation $-\mathcal{L}^{\epl} \psi^\epl = h^\epl(x,y,z) $, with $\psi^\epl = \psi^\epl(x,y,z)$, is bounded, together with its derivatives. Next, we apply Itô's formula to the function
\begin{equation}
\phi^\epl(t,x,y,z,\vartheta) = (b+t)^{aK-1} \vartheta \psi^\epl(x,y,z),
\end{equation}
to deduce 
\begin{equation}
\begin{aligned}
&(b+t)^{aK-1} \widetilde \Theta_{\mathrm{exp},t}^{\delta,\epl} \psi^\epl(X_t^\epl, Y_t^\epl, Z_t^\epl) - b^{aK-1} \widetilde \Theta_{\mathrm{exp},0}^{\delta,\epl} \psi^\epl(X_0^\epl, Y_0^\epl, Z_0^\epl) = \\
&(aK-1) \int_0^t (b+s)^{aK-2} \Theta_{\mathrm{exp},s}^{\delta,\epl} \psi^\epl(X_s^\epl, Y_s^\epl, Z_s^\epl) \dd s + \int_0^t (b+s)^{aK-1} \widetilde \Theta_{\mathrm{exp},s}^{\delta,\epl} h^\epl(X_s^\epl, Y_s^\epl, Z_s^\epl) \dd s \\
&+ \frac1\epl \int_0^t (b+s)^{aK-1} \widetilde \Theta_{\mathrm{exp},s}^{\delta,\epl} \nabla_y \psi^\epl(X_s^\epl, Y_s^\epl, Z_s^\epl) \cdot \sigma \d W_s + \int_0^t (b+s)^{aK-1} \d \widetilde \Theta_{\mathrm{exp},s}^{\delta,\epl} \psi^\epl(X_s^\epl, Y_s^\epl, Z_s^\epl). \\
\end{aligned}
\end{equation}
This, together with \eqref{eq:moments_intergral_equation}, implies
\begin{equation}
\begin{aligned}
\widetilde \Theta_{\mathrm{exp},t}^{\delta,\epl} &= \widetilde \Theta_{\mathrm{exp},0}^{\delta,\epl} \left( \frac{b}{b+t} \right)^{aK} - \frac{a}{b+t} \widetilde \Theta_{\mathrm{exp},t}^{\delta,\epl} \psi^\epl(X_t^\epl, Y_t^\epl, Z_t^\epl) + \frac{a b^{aK-1}}{(b+t)^{aK}} \widetilde \Theta_{\mathrm{exp},0}^{\delta,\epl} \psi^\epl(X_0^\epl, Y_0^\epl, Z_0^\epl) \\
&\quad + a \int_0^t \frac{(b+s)^{aK-1}}{(b+t)^{aK}} \theta f(X_s^\epl) \otimes f(Z_s^\epl) \dd s + \frac{a}\epl \int_0^t \frac{(b+s)^{aK-1}}{(b+t)^{aK}} G Y_s^\epl \otimes f(Z_s^\epl) \dd s \\
&\quad + a(aK-1) \int_0^t \frac{(b+s)^{aK-2}}{(b+t)^{aK}} \Theta_{\mathrm{exp},s}^{\delta,\epl} \psi^\epl(X_s^\epl, Y_s^\epl, Z_s^\epl) \dd s \\
&\quad + \frac{a}\epl \int_0^t \frac{(b+s)^{aK-1}}{(b+t)^{aK}} \widetilde \Theta_{\mathrm{exp},s}^{\delta,\epl} \nabla_y \psi^\epl(X_s^\epl, Y_s^\epl, Z_s^\epl) \cdot \sigma \d W_s \\
&\quad - a^2 \int_0^t \frac{(b+s)^{aK-2}}{(b+t)^{aK}} \widetilde \Theta_{\mathrm{exp},s}^{\delta,\epl} \left( KI + f(X_s^\epl) \otimes f(Z_s^\epl) \right) \psi^\epl(X_s^\epl, Y_s^\epl, Z_s^\epl) \dd s \\
&\quad + a^2 \int_0^t \frac{(b+s)^{aK-2}}{(b+t)^{aK}} \widetilde \Theta_{\mathrm{exp},s}^{\delta,\epl} \E^{\mu^\epl} \left[ f(X^\epl) \otimes f(Z^\epl) \right] \psi^\epl(X_s^\epl, Y_s^\epl, Z_s^\epl) \dd s \\
&\quad + a^2 \int_0^t \frac{(b+s)^{aK-2}}{(b+t)^{aK}} \left( \theta f(X_s^\epl) \otimes f(Z_s^\epl) + \frac1\epl G Y_s^\epl \otimes f(Z_s^\epl) \right) \psi^\epl(X_s^\epl, Y_s^\epl, Z_s^\epl) \dd s.
\end{aligned}
\end{equation}
Due to the boundedness of the function $\psi^\epl$ and the processes $X_t^\epl,Y_t^\epl,Z_t^\epl$, inequality \cite[Theorem 1]{Zak67}, and Jensen's inequality we obtain
\begin{equation}
\begin{aligned}
\E \left[ \norm{\widetilde \Theta_{\mathrm{exp},t}^{\delta,\epl}}^{2q} \right] &\le C + \frac{C}{b+t} \E \left[ \norm{\widetilde \Theta_{\mathrm{exp},t}^{\delta,\epl}}^{2q} \right] + C \int_0^t \frac{(b+s)^{aK-2}}{(b+t)^{aK}} \E \left[ \norm{\widetilde \Theta_{\mathrm{exp},s}^{\delta,\epl}}^{2q} \right] \dd s \\
&\quad + C \int_0^t \frac{(b+s)^{2aK-2}}{(b+t)^{2aK}} \E \left[ \norm{\widetilde \Theta_{\mathrm{exp},s}^{\delta,\epl}}^{2q} \right] \dd s,
\end{aligned}
\end{equation}
which for $t$ sufficiently large implies
\begin{equation}
(b+t)^{aK} \E \left[ \norm{\widetilde \Theta_{\mathrm{exp},t}^{\delta,\epl}}^{2q} \right] \le C(b+t)^{aK} + C \int_0^t \frac1{(b+s)^2} (b+s)^{aK} \E \left[ \norm{\widetilde \Theta_{\mathrm{exp},s}^{\delta,\epl}}^{2q} \right] \dd s,
\end{equation}
and by Grönwall's inequality we get
\begin{equation}
\E \left[ \norm{\widetilde \Theta_{\mathrm{exp},t}^{\delta,\epl}}^{2q} \right] \le C e^{\int_0^t \frac1{(b+s)^2} \dd s} \le C e^{1/b}.
\end{equation}
We remark that if $t$ is small, i.e., in a compact interval $[0,T]$, then we could directly apply Grönwall's inequality from equation \eqref{eq:moments_intergral_equation}. In fact, we would have
\begin{equation}
\E \left[ \norm{\widetilde \Theta_{\mathrm{exp},t}^{\delta,\epl}}^{2q} \right] \le C + C \int_0^t \frac{(b+s)^{aK-1}}{(b+t)^{aK}} \E \left[ \norm{\widetilde \Theta_{\mathrm{exp},s}^{\delta,\epl}}^{2q} \right] \dd s,
\end{equation}
which similarly as above yields to
\begin{equation}
\E \left[ \norm{\widetilde \Theta_{\mathrm{exp},t}^{\delta,\epl}}^{2q} \right] \le C e^{\int_0^t \frac1{b+s} \dd s} \le \frac{C}{b} (T + b).
\end{equation}
This shows that all the even moments of order $p = 2q$ are bounded. If $p$ is odd, then by Hölder's inequality we have
\begin{equation}
\E \left[ \norm{\widetilde \Theta_{\mathrm{exp},t}^{\delta,\epl}}^{p} \right] \le \left( \E \left[ \norm{\widetilde \Theta_{\mathrm{exp},t}^{\delta,\epl}}^{p+1} \right] \right)^{\frac{p}{p+1}},
\end{equation}
and therefore any odd moment can be bounded by the consecutive even moment. Finally, the desired result follows from \eqref{eq:comparison}. 

Let us now consider $(ii)$. The main difference in comparison to $(iv)$ is that we do not have the colored noise process $Y_t^\epl$; this simplifies the analysis. On the other hand, we have the diffusion term in the SDE for the estimator \eqref{eq:SDE_exp_ok}, which we rewrite as
\begin{equation}
\begin{aligned}
\d \widetilde \theta_{\mathrm{exp},t}^\delta &= - \xi_t \widetilde \theta_{\mathrm{exp},t}^\delta \E^\mu \left[ f(X) \otimes f(Z) \right] \dd t + \xi_t \theta f(X_t) \otimes f(Z_t) \dd t \\
&\quad - \xi_t \widetilde \theta_{\mathrm{exp},t}^\delta \left( f(X_t) \otimes f(Z_t) - \E^\mu \left[ f(X) \otimes f(Z) \right] \right) \dd t \\
&\quad + \xi_t \sqrt{2D^S} \dd W_t \otimes f(Z_t).
\end{aligned}
\end{equation}
Similarly as above we work with the process $\widetilde \Theta_{\mathrm{exp},t}^\delta$ defined by the SDE
\begin{equation}
\begin{aligned}
\d \widetilde \Theta_{\mathrm{exp},t}^\delta &= - \xi_t K \widetilde \Theta_{\mathrm{exp},t}^\delta \dd t + \xi_t \theta f(X_t) \otimes f(Z_t) \dd t \\
&\quad - \xi_t \widetilde \Theta_{\mathrm{exp},t}^\delta \left( f(X_t) \otimes f(Z_t) - \E^\mu \left[ f(X) \otimes f(Z) \right] \right) \dd t \\
&\quad + \xi_t \sqrt{2D^S} \dd W_t \otimes f(Z_t),
\end{aligned}
\end{equation}
which can be written as
\begin{equation}
\begin{aligned}
\widetilde \Theta_{\mathrm{exp},t}^\delta &= e^{-K \int_0^t \xi_r \dd r} \widetilde \Theta_{\mathrm{exp},0}^\delta + \int_0^t \xi_s e^{-K \int_s^t \xi_r \dd r} \theta f(X_s) \otimes f(Z_s) \dd s \\
&\quad - \int_0^t \xi_s e^{-K \int_s^t \xi_r \dd r} \widetilde \Theta_{\mathrm{exp},s}^\delta \left( f(X_s) \otimes f(Z_s) - \E^\mu \left[ f(X) \otimes f(Z) \right] \right) \dd s \\
&\quad + \int_0^t \xi_s e^{-K \int_s^t \xi_r \dd r} \sqrt{2D^S} \d W_s \otimes f(Z_s),
\end{aligned}
\end{equation}
and whose moments give an upper bound to the moments of $\widetilde \theta_{\mathrm{exp},t}^\delta$ due to the comparison theorem~\cite[Theorem 1.1]{IkW77}. Proceeding analogously to $(iv)$ we obtain for a function $\psi= \psi(x,z)$ which is bounded along with its derivatives
\begin{equation}
\begin{aligned}
\widetilde \Theta_{\mathrm{exp},t}^\delta &= \widetilde \Theta_{\mathrm{exp},0}^\delta \left( \frac{b}{b+t} \right)^{aK} - \frac{a}{b+t} \widetilde \Theta_{\mathrm{exp},t}^\delta \psi(X_t, Z_t) + \frac{a b^{aK-1}}{(b+t)^{aK}} \widetilde \Theta_{\mathrm{exp},0}^\delta \psi(X_0, Z_0) \\
&\quad + a \int_0^t \frac{(b+s)^{aK-1}}{(b+t)^{aK}} \theta f(X_s) \otimes f(Z_s) \dd s + a \int_0^t \frac{(b+s)^{aK-1}}{(b+t)^{aK}} \sqrt{2D^S} \d W_s \otimes f(Z_s^\epl) \\
&\quad + a(aK-1) \int_0^t \frac{(b+s)^{aK-2}}{(b+t)^{aK}} \Theta_{\mathrm{exp},s}^\delta \psi(X_s, Z_s) \dd s \\
&\quad + a \int_0^t \frac{(b+s)^{aK-1}}{(b+t)^{aK}} \widetilde \Theta_{\mathrm{exp},s}^\delta \nabla_x \psi(X_s, Z_s) \cdot \sqrt{2D^S} \d W_s \\
&\quad - a^2 \int_0^t \frac{(b+s)^{aK-2}}{(b+t)^{aK}} \widetilde \Theta_{\mathrm{exp},s}^\delta \left( KI + f(X_s) \otimes f(Z_s) - \E^\mu \left[ f(X) \otimes f(Z) \right] \right) \psi(X_s, Z_s) \dd s \\
&\quad + a^2 \int_0^t \frac{(b+s)^{aK-2}}{(b+t)^{aK}} \theta f(X_s^\epl) \otimes f(Z_s^\epl) \psi(X_s, Z_s) \dd s \\
&\quad + a^2 \int_0^t \frac{(b+s)^{aK-2}}{(b+t)^{aK}} \sqrt{2D^S} \d W_s \otimes f(Z_s^\epl) \psi(X_s, Z_s),
\end{aligned}
\end{equation}
and the desired result then follows from Grönwall's inequality. Finally, the proofs of $(i)$ and $(iii)$ are almost the same as the proofs of $(ii)$ and $(iv)$, respectively. We omit the details.
\end{proof}

The next two propositions are crucial for proving the (un)biasedness of the SGDCT estimators for processes driven by colored noise. Since the proofs are similar, we only provide details in the case with filtered data.

\begin{proposition} \label{pro:limit_SGDCT_t_NOfilter}
Let $X_t^\epl, Y_t^\epl$ be solutions of system \eqref{eq:system_multiD_additive}. Under \cref{as:positive_definite}$(i)$, it holds
\begin{equation}
\lim_{t\to\infty} \widetilde \theta_t^\epl = \theta + \frac1\epl G \E^{\mu^\epl} \left[ Y^\epl \otimes f(X^\epl) \right] \E^{\mu^\epl} \left[ f(X^\epl) \otimes f(X^\epl) \right]^{-1}, \qquad \text{in } L^2.
\end{equation}
\end{proposition}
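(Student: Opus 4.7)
Denote the conjectured limit by $\bar\theta^\epl := \theta + \tfrac{1}{\epl} G \,\E^{\mu^\epl}[Y^\epl \otimes f(X^\epl)]\, \E^{\mu^\epl}[f(X^\epl)\otimes f(X^\epl)]^{-1}$ and set $e_t := \widetilde\theta_t^\epl - \bar\theta^\epl$. The plan is to derive an $L^2$-Grönwall inequality for $e_t$ by combining the coercivity from \cref{rem:positive_definite} with the Poisson-equation machinery of \cref{lem:Poisson_expansion}, closely paralleling the strategy already used for \cref{lem:bounded_moments}(iv). Substituting $\d \mathcal I_t^\epl$ into \eqref{eq:SDE_base_ko} and using \eqref{eq:system_multiD_additive}, and writing $M(x) := f(x) \otimes f(x)$, $N(x,y) := \tfrac{1}{\epl} G y \otimes f(x)$ with invariant-measure averages $M_\infty, N_\infty$, the definition of $\bar\theta^\epl$ is precisely what makes $\theta M_\infty + N_\infty = \bar\theta^\epl M_\infty$, so the error satisfies
\begin{equation*}
\d e_t = -\xi_t\, e_t M_\infty \dd t + \xi_t R_t \dd t, \qquad R_t := (\theta-\widetilde\theta_t^\epl)\bigl(M(X_t^\epl)-M_\infty\bigr)+\bigl(N(X_t^\epl,Y_t^\epl)-N_\infty\bigr),
\end{equation*}
and both summands in $R_t$ are centered under $\mu^\epl$.

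Next, \cref{lem:Poisson_expansion}, specialized to the generator $\mathcal L^\epl$ of the reduced process $(X_t^\epl,Y_t^\epl)$ (whose hypoellipticity follows verbatim from the argument of \cref{lem:Lebesgue} once the $Z$-component is dropped), provides matrix-valued Poisson correctors $\psi^\epl, \chi^\epl$, bounded along with their derivatives, solving $\mathcal L^\epl \psi^\epl = M-M_\infty$ and $\mathcal L^\epl \chi^\epl = N-N_\infty$. Applying Itô's formula to the auxiliary processes $\xi_t\chi^\epl(X_t^\epl,Y_t^\epl)$ and $\xi_t(\theta-\widetilde\theta_t^\epl)\psi^\epl(X_t^\epl,Y_t^\epl)$, exactly as in the proof of \cref{lem:bounded_moments}(iv), recasts the time integrals of the two centered pieces of $R_t$ as: a boundary term of order $\xi_t$; a mean-zero Itô martingale driven by $\tfrac{1}{\epl}\nabla_y\psi^\epl\sigma\,\d W_t$ and $\tfrac{1}{\epl}\nabla_y\chi^\epl\sigma\,\d W_t$ with quadratic variation of order $\xi_s^2$; a $\dot\xi_s$-term of order $\xi_s^2$; and a cross-term of the form $\int_0^t\xi_s\psi^\epl(X_s^\epl,Y_s^\epl)\,\d\widetilde\theta_s^\epl$, into which the estimator SDE is substituted, producing further $\xi_s^2$ pieces.

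Finally, Itô on $\|e_t\|^2$ combined with the coercivity $e_t M_\infty : e_t \ge K_{\mathfrak d}\|e_t\|^2$ supplied by \cref{rem:positive_definite} yields
\begin{equation*}
\d \|e_t\|^2 \le -2K_{\mathfrak d}\xi_t\|e_t\|^2\dd t + 2\xi_t\,(e_t:R_t)\dd t.
\end{equation*}
Multiplying by the integrating factor $(b+t)^{2aK_{\mathfrak d}}$, taking expectations, plugging in the Poisson-based representation for $\int_0^t\xi_s(b+s)^{2aK_{\mathfrak d}}\E[e_s:R_s]\dd s$, and invoking the uniform moment bound \cref{lem:bounded_moments}(ii) on $\widetilde\theta_s^\epl$, the condition $aK_{\mathfrak d}>1$ (available for $\mathfrak d$ small enough by \cref{as:positive_definite}) ensures that every non-Grönwall contribution is $o((b+t)^{2aK_{\mathfrak d}})$; dividing back then gives $\E[\|e_t\|^2]\to 0$. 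The main obstacle, in my view, is the nonlinear coupling of $\widetilde\theta_t^\epl$ with $M(X_t^\epl)-M_\infty$ inside $R_t$: the Poisson corrector has to be applied to the product $(\theta-\widetilde\theta_t^\epl)\psi^\epl$ rather than to $\psi^\epl$ alone, and the resulting cross-terms $\int\xi_s\psi^\epl\,\d\widetilde\theta_s^\epl$ can only be absorbed into the Grönwall argument thanks to the uniform-in-time moment bounds already established in \cref{lem:bounded_moments}.
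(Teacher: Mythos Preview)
Your proposal is correct and follows essentially the same route as the paper: the paper omits the proof of this proposition and refers to the detailed proof of \cref{pro:limit_SGDCT_t_YESfilter}, which proceeds exactly as you describe---define the error $\Delta_t^\epl$, apply It\^o to $\Gamma_t^\epl=\|\Delta_t^\epl\|^2$, use the coercivity from \cref{rem:positive_definite} together with the comparison principle, and then handle the centered remainder terms via the Poisson correctors of \cref{lem:Poisson_expansion} applied to functions of the form $\zeta_t(\vartheta-\theta)\psi^\epl:\Delta_t^\epl$ with $\zeta_t=(b+t)^{2aK-1}$, invoking \cref{lem:bounded_moments} for the cross-terms. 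The only cosmetic discrepancy is that the appropriate weight in the It\^o auxiliary function is $(b+t)^{2aK_{\mathfrak d}-1}$ (already contracted with $e_t$), not $\xi_t$ alone as you first write, but your later description with the integrating factor $(b+t)^{2aK_{\mathfrak d}}$ makes clear you have the right object in mind.
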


\begin{proposition} \label{pro:limit_SGDCT_t_YESfilter}
Let $X_t^\epl, Y_t^\epl, Z_t^\epl$ be solutions of system \eqref{eq:system_filtered_eps}. Under \cref{as:positive_definite}$(ii)$, it holds
\begin{equation}
\lim_{t\to\infty} \widetilde \theta_{\mathrm{exp},t}^{\delta,\epl} = \theta + \frac1\epl G \E^{\mu^\epl} \left[ Y^\epl \otimes f(Z^\epl) \right] \E^{\mu^\epl} \left[ f(X^\epl) \otimes f(Z^\epl) \right]^{-1}, \qquad \text{in } L^2.
\end{equation}
\end{proposition}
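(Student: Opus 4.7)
The plan is to identify the candidate limit
\[
\theta^\star := \theta + \frac{1}{\epl}\, G\, \E^{\mu^\epl}\bigl[Y^\epl \otimes f(Z^\epl)\bigr]\, M_\epl^{-1}, \qquad M_\epl := \E^{\mu^\epl}\bigl[f(X^\epl) \otimes f(Z^\epl)\bigr],
\]
with $M_\epl$ invertible for $\epl$ small by \cref{rem:positive_definite}, set the error process $e_t := \widetilde \theta_{\mathrm{exp},t}^{\delta,\epl} - \theta^\star$, and show $\E[\|e_t\|^2] \to 0$. Substituting the SDE \eqref{eq:system_multiD_additive} for $X_t^\epl$ into the innovation term of \eqref{eq:SDE_exp_ok_eps} and subtracting the constant $\theta^\star$ gives
\[
\d e_t = -\xi_t\, e_t M_\epl\, \dd t - \xi_t\, e_t\bigl[f(X_t^\epl) \otimes f(Z_t^\epl) - M_\epl\bigr]\,\dd t + \xi_t\, Q_t\, \dd t,
\]
where $Q_t := (\theta - \theta^\star) f(X_t^\epl) \otimes f(Z_t^\epl) + \epl^{-1} G Y_t^\epl \otimes f(Z_t^\epl)$ is centered under $\mu^\epl$ precisely by the choice of $\theta^\star$.

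Next, I apply It\^o's formula to $V(e) = \tfrac{1}{2}\|e\|^2$ and use the matrix coercivity $e_t M_\epl : e_t \geq K_{\mathfrak d}\|e_t\|^2$ from \cref{rem:positive_definite}, which yields a differential inequality whose leading term is $-2\xi_t K_{\mathfrak d}\, \E[V(e_t)]$, plus two cross terms involving products of $e_t$ with centered fluctuations of the driving processes. To tame those, I invoke the Poisson equation trick of \cref{lem:Poisson_expansion} (valid on the compact state space, cf.\ \cref{rem:compact_state_space_2}) and solve $\mathcal L^\epl \psi_1^\epl = f(x)\otimes f(z) - M_\epl$ and $\mathcal L^\epl \psi_2^\epl = Q(x,y,z)$ for bounded $\psi_i^\epl$ with bounded derivatives. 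Following the computation in the proof of \cref{lem:bounded_moments}, I apply It\^o's formula to $(b+t)^{aK_{\mathfrak d}-1}\, e_t\, \psi_i^\epl(X_t^\epl, Y_t^\epl, Z_t^\epl)$ to rewrite the time integrals of the two fluctuations as boundary terms of size $(b+t)^{-1}$ plus an It\^o martingale and remainder integrals of order $\int_0^t \xi_s^2\, \dd s$. Combined with the uniform-in-time moment bounds of \cref{lem:bounded_moments}, this reduces the problem to a Grönwall inequality of the form $(b+t)^{2aK_{\mathfrak d}}\E[V(e_t)] \leq C_1 + C_2\int_0^t (b+s)^{2aK_{\mathfrak d}-2}\E[V(e_s)]\,\dd s$, from which $\E[V(e_t)] \to 0$ follows.

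The main obstacle is the second cross term $-\xi_t\, e_t\,[f(X_t^\epl)\otimes f(Z_t^\epl) - M_\epl]$, which couples the unknown error with a centered fluctuation: a direct Cauchy--Schwarz estimate produces $\xi_t C\, \E[V(e_t)]$ with a constant $C$ unrelated to $K_{\mathfrak d}$, destroying the Grönwall step. The Poisson equation circumvents this, but applying It\^o to $e_t \psi_1^\epl$ generates a further integral involving $\d e_t$, which must be expanded recursively exactly as in the proof of \cref{lem:bounded_moments}. This is precisely why the assumption $aK > 1$ in \cref{as:positive_definite} is essential: the exponent $aK_{\mathfrak d} - 1$ on the polynomial prefactor must be strictly positive for the remainder integrals to be integrable and for the Grönwall argument to close. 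The proof of \cref{pro:limit_SGDCT_t_NOfilter} proceeds by the same template, with the filtered variable $Z_t^\epl$ simply replaced by $X_t^\epl$ throughout.
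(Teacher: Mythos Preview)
Your proposal is correct and follows essentially the same strategy as the paper: define the error $\Delta_t^\epl = \widetilde\theta_{\mathrm{exp},t}^{\delta,\epl} - \theta^\star$, apply It\^o to $\|\Delta_t^\epl\|^2$, use the coercivity of $M_\epl$ to extract a damping term, and handle the two centered fluctuation integrals via the Poisson equation trick of \cref{lem:Poisson_expansion}. Your decomposition with the single centered forcing $Q_t$ is just a regrouping of the paper's two separate terms $I_t^2$ and $I_t^3$.

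Two small points of divergence. First, after applying It\^o to $\|e_t\|^2$ and the comparison principle, the kernel in the fluctuation integrals is $\xi_s e^{-2K\int_s^t\xi_r\,\dd r}$, so the correct polynomial prefactor for the It\^o step on $\psi_i^\epl$ is $(b+t)^{2aK_{\mathfrak d}-1}$, not $(b+t)^{aK_{\mathfrak d}-1}$; the latter is what appears in \cref{lem:bounded_moments}, where one is controlling $\widetilde\Theta$ itself rather than its squared norm. Consequently the condition genuinely needed inside this proposition is only $2aK>1$, and the stronger hypothesis $aK>1$ enters earlier, through \cref{lem:bounded_moments}. Second, the paper does not close with a Gr\"onwall inequality on $\E[V(e_t)]$: once the Poisson expansion has been carried out, every term in the resulting expression for $I_t^2$ and $I_t^3$ is bounded directly using the uniform moment bounds of \cref{lem:bounded_moments}, yielding $\E[I_t^j] \le C\bigl((b+t)^{-1} + (b+t)^{-2aK}\bigr)$ without any recursion. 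Your Gr\"onwall route would also work, but is an extra step the paper avoids.
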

\begin{proof}
Let us consider the SDE for the estimator \eqref{eq:SDE_exp_ok_eps} and rewrite it as
\begin{equation}
\begin{aligned}
\d & \left( \widetilde \theta_{\mathrm{exp},t}^{\delta,\epl} - \theta - \frac1\epl G \E^{\mu^\epl} \left[ Y^\epl \otimes f(Z^\epl) \right] \E^{\mu^\epl} \left[ f(X^\epl) \otimes f(Z^\epl) \right]^{-1} \right) \\
&= - \xi_t \left( \widetilde \theta_{\mathrm{exp},t}^{\delta,\epl} - \theta - \frac1\epl G \E^{\mu^\epl} \left[ Y^\epl \otimes f(Z^\epl) \right] \E^{\mu^\epl} \left[ f(X^\epl) \otimes f(Z^\epl) \right]^{-1} \right) \E^{\mu^\epl} \left[ f(X^\epl) \otimes f(Z^\epl) \right] \dd t \\
&\quad - \xi_t \left( \widetilde \theta_{\mathrm{exp},t}^{\delta,\epl} - \theta \right) \left( f(X_t^\epl) \otimes f(Z_t^\epl) - \E^{\mu^\epl} \left[ f(X^\epl) \otimes f(Z^\epl) \right] \right) \dd t \\
&\quad + \frac1\epl \xi_t G \left( Y_t^\epl \otimes f(Z_t^\epl) - \E^{\mu^\epl} \left[ Y^\epl \otimes f(Z^\epl)  \right] \right) \dd t.
\end{aligned}
\end{equation}
Letting
\begin{equation}
\Delta^\epl_t = \widetilde \theta_{\mathrm{exp},t}^{\delta,\epl} - \theta - \frac1\epl G \E^{\mu^\epl} \left[ Y^\epl \otimes f(Z^\epl) \right] \E^{\mu^\epl} \left[ f(X^\epl) \otimes f(Z^\epl) \right]^{-1} \quad \text{and} \quad \Gamma^\epl_t = \norm{\Delta_t^\epl}^2,
\end{equation}
by Itô's lemma we have
\begin{equation}
\begin{aligned}
\d \Gamma_t^\epl &= - 2 \xi_t \Delta_t^\epl \E^{\mu^\epl} \left[ f(X^\epl) \otimes f(Z^\epl) \right] : \Delta_t^\epl \dd t \\
&\quad - 2 \xi_t \left( \widetilde \theta_{\mathrm{exp},t}^{\delta,\epl} - \theta \right) \left( f(X_t^\epl) \otimes f(Z_t^\epl) - \E^{\mu^\epl} \left[ f(X^\epl) \otimes f(Z^\epl) \right] \right) : \Delta_t^\epl \dd t \\
&\quad + \frac2\epl \xi_t G \left( Y_t^\epl \otimes f(Z_t^\epl) - \E^{\mu^\epl} \left[ Y^\epl \otimes f(Z^\epl)  \right] \right) : \Delta_t^\epl \dd t,
\end{aligned}
\end{equation}
where $\colon$ stands for the Frobenius inner product, and which due to \cref{as:positive_definite,rem:positive_definite} implies
\begin{equation}
\begin{aligned}
\d \Gamma_t^\epl &\le - 2 K \xi_t \Gamma_t^\epl \dd t \\
&\quad - 2 \xi_t \left( \widetilde \theta_{\mathrm{exp},t}^{\delta,\epl} - \theta \right) \left( f(X_t^\epl) \otimes f(Z_t^\epl) - \E^{\mu^\epl} \left[ f(X^\epl) \otimes f(Z^\epl) \right] \right) : \Delta_t^\epl \dd t \\
&\quad + \frac2\epl \xi_t G \left( Y_t^\epl \otimes f(Z_t^\epl) - \E^{\mu^\epl} \left[ Y^\epl \otimes f(Z^\epl)  \right] \right) : \Delta_t^\epl \dd t.
\end{aligned}
\end{equation}
By the comparison principle we obtain
\begin{equation} \label{eq:decomposition_Gamma}
\begin{aligned}
\Gamma_t^\epl &\le \norm{\Delta_0^\epl}^2 e^{-2K \int_0^t \xi_r \dd r} \\
&\quad -2 \int_0^t \xi_s e^{-2K \int_s^t \xi_r \dd r} \left( \widetilde \theta_{\mathrm{exp},s}^{\delta,\epl} - \theta \right) \left( f(X_s^\epl) \otimes f(Z_s^\epl) - \E^{\mu^\epl} \left[ f(X^\epl) \otimes f(Z^\epl) \right] \right) : \Delta_s^\epl \dd s \\
&\quad + \frac2\epl \int_0^t \xi_s e^{-2K \int_s^t \xi_r \dd r} G \left( Y_s^\epl \otimes f(Z_s^\epl) - \E^{\mu^\epl} \left[ Y^\epl \otimes f(Z^\epl)  \right] \right) : \Delta_s^\epl \dd s \\
&\eqdef I_t^1 + I_t^2 + I_t^3,
\end{aligned}
\end{equation}
and we now study the three terms in the right-hand side separately. First, we use equation \eqref{eq:learn_rate_integral} to deduce that
\begin{equation} \label{eq:lim_It1}
\lim_{t\to\infty} I_t^1 = \lim_{t\to\infty} \norm{\Delta_0^\epl}^2 \left( \frac{b}{b+t} \right)^{2aK} = 0.
\end{equation}
Then, applying \cref{lem:Poisson_expansion} for the right hand side of the Poisson equation being the function
\begin{equation}
h_1^\epl(x,y,z) = f(x) \otimes f(z) - \E^{\mu^\epl} \left[ f(X^\epl) \otimes f(Z^\epl) \right],
\end{equation}
we deduce that the solution to the Poisson equation $\psi_1^\epl = \psi_1^\epl(x,y,z)$ is bounded, together with all its derivatives. Letting $\zeta_t = (b+t)^{2aK - 1}$ and applying Itô's lemma to the function
\begin{equation}
\phi_1^\epl(t,x,y,z,\vartheta) = \zeta_t (\vartheta - \theta) \psi_1^\epl(x,y,z) : \left( \vartheta - \theta - \frac1\epl G \E^{\mu^\epl} \left[ Y^\epl \otimes f(Z^\epl) \right] \E^{\mu^\epl} \left[ f(X^\epl) \otimes f(Z^\epl) \right]^{-1} \right),
\end{equation}
we obtain
\begin{equation}
\begin{aligned}
&\zeta_t \left( \widetilde \theta_{\mathrm{exp},t}^{\delta,\epl} - \theta \right) \psi_1^\epl(X_t^\epl, Y_t^\epl, Z_t^\epl) \colon \Delta_t^\epl - \zeta_0 \left( \widetilde \theta_{\mathrm{exp},0}^{\delta,\epl} - \theta \right) \psi_1^\epl(X_0^\epl, Y_0^\epl, Z_0^\epl) \colon \Delta_0^\epl = \\
&\hspace{2cm}\quad \int_0^t \zeta_s' \left( \widetilde \theta_{\mathrm{exp},s}^{\delta,\epl} - \theta \right) \psi_1^\epl(X_s^\epl, Y_s^\epl, Z_s^\epl) \colon \Delta_s^\epl \dd s \\
&\hspace{2cm} + \int_0^t \zeta_t \left( \widetilde \theta_{\mathrm{exp},s}^{\delta,\epl} - \theta \right) \mathcal L^\epl \psi_1^\epl(X_s^\epl, Y_s^\epl, Z_s^\epl) \colon \Delta_s^\epl \dd s \\
&\hspace{2cm} + \frac1\epl \int_0^t \zeta_s \left( \widetilde \theta_{\mathrm{exp},s}^{\delta,\epl} - \theta \right) \nabla_y \psi_1^\epl(X_s^\epl, Y_s^\epl, Z_s^\epl) \colon \Delta_s^\epl \cdot \sigma \dd W_s \\
&\hspace{2cm} + \int_0^t \zeta_s \left[ \left( \widetilde \theta_{\mathrm{exp},s}^{\delta,\epl} - \theta \right) \psi_1^\epl(X_s^\epl, Y_s^\epl, Z_s^\epl) + \Delta_s^\epl \psi_1^\epl(X_s^\epl, Y_s^\epl, Z_s^\epl)^\top \right] \colon \dd \widetilde \theta_{\mathrm{exp},s}^{\delta,\epl},
\end{aligned}
\end{equation}
which implies
\begin{equation} \label{eq:twice}
\begin{aligned}
I_t^2 &=\quad \frac{2a}{b+t} \left( \widetilde \theta_{\mathrm{exp},t}^{\delta,\epl} - \theta \right) \psi_1^\epl(X_t^\epl, Y_t^\epl, Z_t^\epl) \colon \Delta_t^\epl - \frac{2ab^{2aK-1}}{(b+t)^{2aK}} \left( \widetilde \theta_{\mathrm{exp},0}^{\delta,\epl} - \theta \right) \psi_1^\epl(X_0^\epl, Y_0^\epl, Z_0^\epl) \colon \Delta_0^\epl \\
&\quad - \frac{2a(2aK-1)}{(b+t)^{2aK}} \int_0^t (b+s)^{2aK-2} \left( \widetilde \theta_{\mathrm{exp},s}^{\delta,\epl} - \theta \right) \psi_1^\epl(X_s^\epl, Y_s^\epl, Z_s^\epl) \colon \Delta_s^\epl \dd s \\
&\quad - \frac{2a}{\epl(b+t)^{2aK}} \int_0^t (b+s)^{2aK-1} \left( \widetilde \theta_{\mathrm{exp},s}^{\delta,\epl} - \theta \right) \nabla_y \psi_1^\epl(X_s^\epl, Y_s^\epl, Z_s^\epl) \colon \Delta_s^\epl \cdot \sigma \dd W_s \\
&\quad - \frac{2a^2}{(b+t)^{2aK}} \int_0^t (b+s)^{2aK-2} \left[ \left( \widetilde \theta_{\mathrm{exp},s}^{\delta,\epl} - \theta \right) \psi_1^\epl(X_s^\epl, Y_s^\epl, Z_s^\epl) + \Delta_s^\epl \psi_1^\epl(X_s^\epl, Y_s^\epl, Z_s^\epl)^\top \right] \colon \\
&\hspace{5.25cm} \left[ \left( -\left(\widetilde \theta_{\mathrm{exp},s}^{\delta,\epl} - \theta \right) f(X_s^\epl) + \frac1\epl G Y_s^\epl \right) \otimes f(Z_s^\epl) \right] \dd s.
\end{aligned}
\end{equation}
By the boundedness of the function $\psi_1^\epl$ and the estimator $\widetilde \theta_{\mathrm{exp},s}^{\delta,\epl}$ by \cref{lem:Poisson_expansion} and \cref{lem:bounded_moments}, respectively, and due to the Itô isometry, we obtain
\begin{equation} \label{eq:lim_It2}
\E \left[ I_t^2 \right] \le C \left( \frac1{b+t} + \frac1{(b+t)^{2aK}} \right).
\end{equation}
Repeating a similar argument for $I_t^3$, but now applying \cref{lem:Poisson_expansion} to the function 
\begin{equation}
h_2^\epl(x,y,z) = y \otimes f(z) - \E^{\mu^\epl} \left[ Y^\epl \otimes f(Z^\epl)  \right],
\end{equation}
and Itô's lemma to the function
\begin{equation}
\phi_2^\epl(t,x,y,z,\vartheta) = \zeta_t \psi_2^\epl(x,y,z) \colon \left( \vartheta - \theta - \frac1\epl G \E^{\mu^\epl} \left[ Y^\epl \otimes f(Z^\epl) \right] \E^{\mu^\epl} \left[ f(X^\epl) \otimes f(Z^\epl) \right]^{-1} \right),
\end{equation}
where $\psi_2^\epl$ is the solution of the Poisson problem, we also get that
\begin{equation} \label{eq:lim_It3}
\E \left[ I_t^3 \right] \le C \left( \frac1{b+t} + \frac1{(b+t)^{2aK}} \right).
\end{equation}
Finally, decomposition \eqref{eq:decomposition_Gamma} together with equations \eqref{eq:lim_It1}, \eqref{eq:lim_It2}, \eqref{eq:lim_It3} yield that $\Delta_t^\epl \to 0$ in $L^2$, which in turn gives desired results.
\end{proof}

\subsection{Proof of the main results}

We are now ready to prove the theoretical results presented in \cref{sec:estimation_additive}.

\begin{proof}[Proof of \cref{thm:MLE_ok}]
Replacing equation \eqref{eq:limit_multiD_additive} in the definition of the estimator \eqref{eq:MLE_ok} we have
\begin{equation}
\widehat \theta(X,T) = \theta + \sqrt{2D^S} \left( \int_0^T \d W_t \otimes f(X_t) \right) \left( \int_0^T f(X_t) \otimes f(X_t) \dd t \right)^{-1}.
\end{equation}
Then, by the ergodic theorem for additive functionals of Markov processes and the martingale central limit theorem \cite[Theorem 3.33]{PaS08}, \cite[Thm 2.1]{KLO12} we obtain
\begin{equation}
\lim_{T\to\infty} \frac1T \int_0^T f(X_t) \otimes f(X_t) \dd t = \E^{\mu} \left[ f(X) \otimes f(X) \right], \qquad a.s.,
\end{equation}
and
\begin{equation}
\lim_{T\to\infty} \frac1T \int_0^T \d W_t \otimes f(X_t) = 0, \qquad a.s., 
\end{equation}
which imply the desired result.
\end{proof}

\begin{proof}[Proof of \cref{pro:MLE_ko}]
Replacing the first equation from \eqref{eq:system_multiD_additive} in the definition of the estimator \eqref{eq:MLE_ko} we have
\begin{equation}
\widehat \theta(X^\epl,T) = \theta + \frac1\epl G \left( \int_0^T Y_t^\epl \otimes f(X^\epl_t) \dd t \right) \left( \int_0^T f(X^\epl_t) \otimes f(X^\epl_t) \dd t \right)^{-1}, \qquad a.s.,
\end{equation}
and by the ergodic theorem we obtain
\begin{equation}
\lim_{T\to\infty} \widehat \theta(X^\epl,T) = \theta + \frac1\epl G \E^{\mu^\epl} \left[ Y^\epl \otimes f(X^\epl) \right] \E^{\mu^\epl} \left[ f(X^\epl) \otimes f(X^\epl) \right]^{-1}, \qquad a.s.
\end{equation}
Finally, employing \cref{lem:magic_formula_d1} we get the desired result.
\end{proof}

\begin{proof}[Proof of \cref{thm:exp_ok}]
Replacing equation \eqref{eq:limit_multiD_additive} in the definition of the estimator \eqref{eq:exp_ok} we have
\begin{equation}
\widehat \theta_{\mathrm{exp}}^\delta(X,T) = \theta + \sqrt{2D^S} \left( \int_0^T \d W_t \otimes f(Z_t) \right) \left( \int_0^T f(X_t) \otimes f(Z_t) \dd t \right)^{-1}.
\end{equation}
Then, by the ergodic theorem and the martingale central limit theorem \cite[Theorem 3.33]{PaS08} we obtain
\begin{equation}
\lim_{T\to\infty} \frac1T \int_0^T f(X_t) \otimes f(Z_t) \dd t = \E^{\mu} \left[ f(X) \otimes f(Z) \right], \qquad a.s.,
\end{equation}
and
\begin{equation}
\lim_{T\to\infty} \frac1T \int_0^T \d W_t \otimes f(Z_t) = 0, \qquad a.s., 
\end{equation}
which imply the desired result.
\end{proof}

\begin{proof}[Proof of \cref{thm:exp_ok_eps}]
Replacing the first equation from \eqref{eq:system_multiD_additive} in the definition of the estimator \eqref{eq:exp_ok_eps} we have
\begin{equation}
\widehat \theta_{\mathrm{exp}}^\delta(X^\epl,T) = \theta + \frac1\epl G \left( \int_0^T Y_t^\epl \otimes f(Z^\epl_t) \dd t \right) \left( \int_0^T f(X^\epl_t) \otimes f(Z^\epl_t) \dd t \right)^{-1},
\end{equation}
and by the ergodic theorem we obtain
\begin{equation}
\lim_{T\to\infty} \widehat \theta_{\mathrm{exp}}^\delta(X^\epl,T) = \theta + \frac1\epl G \E^{\mu^\epl} \left[ Y^\epl \otimes f(Z^\epl) \right] \E^{\mu^\epl} \left[ f(X^\epl) \otimes f(Z^\epl) \right]^{-1}, \qquad a.s.
\end{equation}
Employing formula $(i)$ in \cref{lem:magic_formulas} we have
\begin{equation}
\lim_{T\to\infty} \widehat \theta_{\mathrm{exp}}^\delta(X^\epl,T) = - \frac1\delta \E^{\mu^\epl} \left[ X^\epl \otimes \nabla f(Z^\epl) (X^\epl - Z^\epl) \right] \E^{\mu^\epl} \left[ f(X^\epl) \otimes f(Z^\epl) \right]^{-1}, \qquad a.s.
\end{equation}
which due to the weak convergence of the joint process $(X_t^\epl,Z_t^\epl)$ to $(X_t,Z_t)$ gives
\begin{equation}
\lim_{\epl\to0} \lim_{T\to\infty} \widehat \theta_{\mathrm{exp}}^\delta(X^\epl,T) = - \frac1\delta \E^\mu \left[ X \otimes \nabla f(Z) (X - Z) \right] \E^\mu \left[ f(X) \otimes f(Z) \right]^{-1}, \qquad a.s.
\end{equation}
Finally, formula $(ii)$ in \cref{lem:magic_formulas} yields the desired result.
\end{proof}

\begin{proof}[Proof of \cref{thm:SGDCT_ok}]
The proof is analogous to the proof of \cref{thm:SGDCT_exp_ok}, so we omit the details here. Moreover, it can also be seen as a particular case of \cite[Theorem 1]{SiS20}.
\end{proof}

\begin{proof}[Proof of \cref{pro:SGDCT_ko}]
The desired result is obtained upon combining \cref{pro:limit_SGDCT_t_NOfilter} and \cref{lem:magic_formula_d1}.
\end{proof}

\begin{proof}[Proof of \cref{thm:SGDCT_exp_ok}]
Let us consider the SDE for the estimator  \eqref{eq:SDE_exp_ok} and rewrite it as
\begin{equation}
\begin{aligned}
\d \left( \widehat \theta^\delta_{\mathrm{exp},t} - \theta \right) &= - \xi_t \left( \widehat \theta^\delta_{\mathrm{exp},t} - \theta \right) \E \left[ f(X) \otimes f(Z) \right] \dd t \\
&\quad - \xi_t \left( \widehat \theta^\delta_{\mathrm{exp},t} - \theta \right) \left( f(X_t) \otimes f(Z_t) - \E \left[ f(X) \otimes f(Z) \right] \right) \dd t \\
&\quad + \xi_t \sqrt{2D^S} \dd W_t \otimes f(Z_t).
\end{aligned}
\end{equation}
Letting
\begin{equation}
\Delta_t = \widetilde \theta_{\mathrm{exp},t}^\delta - \theta \qquad \text{and} \qquad \Gamma_t = \norm{\Delta_t}^2,
\end{equation}
by Itô's lemma we have
\begin{equation}
\begin{aligned}
\d \Gamma_t &= - 2 \xi_t \Delta_t \E^\mu \left[ f(X) \otimes f(Z) \right] : \Delta_t \dd t \\
&\quad - 2 \xi_t \Delta_t \left( f(X_t) \otimes f(Z_t) - \E^\mu \left[ f(X) \otimes f(Z) \right] \right) : \Delta_t \dd t \\
&\quad + \xi_t^2 \norm{\sqrt{2D^S}}^2 \norm{f(Z_t)}^2 \dd t \\
&\quad + 2 \xi_t \sqrt{2D^S} \dd W_t \otimes f(Z_t) \colon \Delta_t,
\end{aligned}
\end{equation}
which due to \cref{as:positive_definite,rem:positive_definite} implies
\begin{equation}
\begin{aligned}
\d \Gamma_t &\le - 2 K\xi_t \Gamma_t \dd t \\
&\quad - 2 \xi_t \Delta_t \left( f(X_t) \otimes f(Z_t) - \E^\mu \left[ f(X) \otimes f(Z) \right] \right) : \Delta_t \dd t \\
&\quad + \xi_t^2 \norm{\sqrt{2D^S}}^2 \norm{f(Z_t)}^2 \dd t \\
&\quad + 2 \xi_t \sqrt{2D^S} \dd W_t \otimes f(Z_t) \colon \Delta_t.
\end{aligned}
\end{equation}
By the comparison principle we obtain
\begin{equation}
\begin{aligned}
\Gamma_t &\le \norm{\Delta_0}^2 e^{-2K \int_0^t \xi_r \dd r} \\
&\quad -2 \int_0^t \xi_s e^{-2K \int_s^t \xi_r \dd r} \Delta_s \left( f(X_s) \otimes f(Z_s) - \E^\mu \left[ f(X) \otimes f(Z) \right] \right) : \Delta_s \dd s \\
&\quad + \norm{\sqrt{2D^S}}^2 \int_0^t \xi_s^2 e^{-2K \int_s^t \xi_r \dd r} \norm{f(Z_s)}^2 \dd s \\
&\quad + 2 \int_0^t \xi_s e^{-2K \int_s^t \xi_r \dd r} \sqrt{2D^S} \dd W_s \otimes f(Z_s) \colon \Delta_s \\
&\eqdef I_t^1 + I_t^2 + I_t^3 + I_t^4,
\end{aligned}
\end{equation}
and we now study the four terms in the right-hand side separately. The first two terms $I_t^1$ and $I_t^2$ appear also in the proof of \cref{pro:limit_SGDCT_t_YESfilter} with colored noise, where we show that their expectations vanish as $t\to\infty$. Regarding $I_t^2$, the main difference, which does not affect the final result, is in equation \eqref{eq:twice}, where in this case we would have
\begin{equation}
\begin{aligned}
I_t^2 &=\quad \frac{2a}{b+t} \left( \widetilde \theta_{\mathrm{exp},t}^\delta - \theta \right) \psi_1(X_t, Z_t) \colon \Delta_t - \frac{2ab^{2aK-1}}{(b+t)^{2aK}} \left( \widetilde \theta_{\mathrm{exp},0}^\delta - \theta \right) \psi_1(X_0, Z_0) \colon \Delta_0 \\
&\quad - \frac{2a(2aK-1)}{(b+t)^{2aK}} \int_0^t (b+s)^{2aK-2} \left( \widetilde \theta_{\mathrm{exp},s}^\delta - \theta \right) \psi_1(X_s, Z_s) \colon \Delta_s \dd s \\
&\quad + \frac{2a^2}{(b+t)^{2aK}} \int_0^t (b+s)^{2aK-2} \left[ \left( \widetilde \theta_{\mathrm{exp},s}^\delta - \theta \right) \psi_1(X_s, Z_s) + \Delta_s \psi_1(X_s, Z_s)^\top \right] \colon \\
&\hspace{8.5cm} \left[ \left(\widetilde \theta_{\mathrm{exp},s}^\delta - \theta \right) f(X_s) \otimes f(Z_s) \right] \dd s, \\
&\quad - \frac{2a^2}{(b+t)^{2aK}} \int_0^t (b+s)^{2aK-2} \left[ \left( \widetilde \theta_{\mathrm{exp},s}^\delta - \theta \right) \psi_1(X_s, Z_s) + \Delta_s \psi_1(X_s, Z_s)^\top \right] \colon \\
&\hspace{8.5cm} \left[ \sqrt{2D^S} \dd W_s \otimes f(Z_s) \right], \\
&\quad - \frac{2a^3 \norm{\sqrt{D^S}}^2}{(b+t)^{2aK}} \int_0^t (b + s)^{2aK-3} \left( \psi_1(X_s, Z_s) + \psi_1(X_s, Z_s)^\top \right) \colon f(Z_s) \otimes f(Z_s) \dd s.
\end{aligned}
\end{equation}
Then, we have
\begin{equation}
\E \left[ I_t^3 \right] \le \frac{C}{(b+t)^{2aK}} \int_0^t (b+s)^{2aK-2} \dd s \le C \left( \frac1{b+t} + \frac1{(b+t)^{2aK}} \right),
\end{equation}
and $\E[I_t^4] = 0$ since by the boundedness of the estimator $\widetilde \theta_{\mathrm{exp},t}^\delta $ due to \cref{lem:bounded_moments} we get
\begin{equation}
\E \left[ (I_t^4)^2 \right] = 4 \int_0^t \xi_s^2 e^{-4K \int_s^t \xi_r \dd r} \norm{\sqrt{2D^S} \Delta_s f(Z_s)}^2 \dd s \le C \left( \frac1{b+t} + \frac1{(b+t)^{4aK}} \right).
\end{equation}
Therefore, we deduce that $\Delta_t \to 0$ in $L^2$, which in turn gives desired results.
\end{proof}

\begin{proof}[Proof of \cref{thm:SGDCT_exp_ok_eps}]
The desired result is obtained applying \cref{pro:limit_SGDCT_t_YESfilter}, and, similarly to the proof of \cref{thm:exp_ok_eps}, \cref{lem:magic_formulas}.
\end{proof}

\section{Lévy area correction} \label{sec:Levy}

In this section, we demonstrate that the MLE and SGDCT estimators, that were introduced and analyzed in the previous sections, can be extended to SDEs driven by colored multiplicative noise. We are primarily interested in the case where the limiting SDE contains an additional drift term that is due to the Lévy area correction introduced in \cref{sec:setting}; compare the Stratonovich stochastic integral in the Wong--Zakai theorem. Consider, in particular, the setting of \cref{ex:multiD}, let $d=2$ and set the drift function $h \colon \R^2 \to \R^2$ and the diffusion function $g \colon \R^2 \to \R^{2 \times 2}$ to 
\begin{equation}
h(x) = - \theta x \qquad \text{and} \qquad g(x) = \sqrt{\kappa + \beta \norm{x}^2} I,
\end{equation}
where $\theta,\kappa,\beta$ are positive constants. Then the system \eqref{eq:system_multiD} reads
\begin{equation} \label{eq:systemLevy2D}
\begin{aligned}
\d X^\epl_t &= - \theta X^\epl_t \dd t + \sqrt{\kappa + \beta \norm{X^\epl_t}^2} \frac{Y^\epl_t}{\epl} \dd t, \\
\d Y^\epl_t &= - \frac{A}{\epl^2} Y^\epl_t \dd t + \frac{\sqrt{\eta}}{\epl} \dd W_t,
\end{aligned}
\end{equation}
and the limit equation~\eqref{eq:limit_multiD} becomes
\begin{equation} \label{eq:limitLevy2D}
\d X_t = - L X_t \dd t + \sqrt{\kappa_0 + \beta_0 \norm{X_t}^2} \dd W_t,
\end{equation}
where $\kappa_0 = \kappa \eta/\rho$, $\beta_0 = \beta \eta/\rho$ and
\begin{equation} \label{eq:L_def}
L = \left( \theta - \frac{\beta_0}{2} \right) I + \frac{\gamma \beta_0}{2 \alpha} J. 
\end{equation}
We can easily verify that \cref{as:ergodicity} is satisfied in this framework because $f(x) = -x$ and
\begin{equation}
- \theta x \cdot x = - \theta \norm{x}^2 \qquad \text{and} \qquad A y \cdot y = \alpha \norm{y}^2.
\end{equation}
Moreover, we assume that the parameters are chosen so that the processes are ergodic with a unique invariant measure, and in particular that
\begin{equation} \label{eq:condition_L}
\theta > \beta_0/2,
\end{equation}
which is necessary from the definition of $L$ in \eqref{eq:L_def}. We are interested inferring the parameters in the drift function, i.e., the four components of the matrix $L \in \R^{2\times2}$,  of the limit SDE \eqref{eq:limitLevy2D} from observations of the process $X_{\epl}^t$ in~\eqref{eq:systemLevy2D}. Similarly to the case of additive noise, we propose the MLE estimator
\begin{equation} \label{eq:exp_Levy}
\widehat L_{\mathrm{exp}}^\delta(X^\epl,T) = - \left( \int_0^T \d X_t^\epl \otimes Z_t^\epl \right) \left( \int_0^T X_t^\epl \otimes Z_t^\epl \dd t \right)^{-1},
\end{equation}
and the SGDCT estimator which solves the SDEs
\begin{equation}
\d \widetilde L_{\mathrm{exp},t}^{\delta,\epl} = - \xi_t \d \mathcal I_{\mathrm{exp},t}^{\delta,\epl} \otimes Z_t^\epl, \qquad \d \mathcal I_{\mathrm{exp},t}^{\delta,\epl} = \d X_t^\epl + \widetilde L_{\mathrm{exp},t}^{\delta,\epl} X_t^\epl \dd t,
\end{equation}
or equivalently
\begin{equation} \label{eq:SGDCT_exp_Levy}
\d \widetilde L^{\delta,\epl}_{\mathrm{exp},t} = - \xi_t \widetilde L_{\mathrm{exp},t}^{\delta,\epl} X_t^\epl \otimes Z_t^\epl \dd t - \xi_t \d X_t^\epl \otimes Z_t^\epl.
\end{equation}
In the next sections we study the asymptotic unbiasedness of the MLE estimator $\widehat L_{\mathrm{exp}}^\delta(X^\epl,T)$ and the SGDCT estimator $\widetilde L^{\delta,\epl}_{\mathrm{exp},t}$ in the limit as time goes to infinity and $\epl\to0$. Since the main ideas in the convergence analysis are similar to the case of additive noise, we only give a sketch of the proofs. Then, in the numerical experiments in \cref{sec:num_Levy} we observe that both the estimators are able to accurately infer the unknown matrix $L$ in the limit SDE. 

\subsection{Asymptotic unbiasedness for MLE estimator}

Similarly to the analysis in \cref{sec:proofs}, we first consider the following system of SDEs for the stochastic processes $X_t^\epl$ and $Y_t^\epl$ together with the additional equation \eqref{eq:filtered_data} for the filtered data $Z_t^\epl$
\begin{equation} \label{eq:system_Levy}
\begin{aligned}
\d X^\epl_t &= - \theta X^\epl_t \dd t + \sqrt{\kappa + \beta \norm{X^\epl_t}^2} \frac{Y^\epl_t}{\epl} \dd t, \\
\d Y^\epl_t &= - \frac{A}{\epl^2} Y^\epl_t \dd t + \frac{\sqrt{\eta}}{\epl} \dd W_t, \\
\d Z^\epl_t &= - \frac1\delta (Z^\epl_t - X^\epl_t) \dd t,
\end{aligned}
\end{equation}
and the corresponding system obtained in the limit as $\epl\to0$
\begin{equation}
\begin{aligned} \label{eq:system_limit_Levy}
\d X_t &= - L X_t \dd t + \sqrt{\kappa_0 + \beta_0 \norm{X_t}^2} \dd W_t, \\
\d Z_t &= - \frac1\delta (Z_t - X_t) \dd t.
\end{aligned}
\end{equation}
We verify that the system of SDEs are hypoelliptic, and therefore the measures induced by the stochastic processes admit smooth densities with respect to the Lebesgue measure. 

\begin{lemma} \label{lem:Lebesgue_Levy}
Let $\nu_t^\epl$ and $\nu_t$ be the measures at time $t$ induced by the joint processes $(X_t^\epl,Y_t^\epl,Z_t^\epl)$ and $(X_t,Z_t)$ given by equations \eqref{eq:system_Levy} and \eqref{eq:system_limit_Levy}, respectively. Then, the measures $\nu_t^\epl$ and $\nu_t$ admit smooth densities $\varphi^\epl_t$ and $\varphi_t$ with respect to the Lebesgue measure.
\end{lemma}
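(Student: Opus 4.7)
The plan is to mirror the proof of \cref{lem:Lebesgue}, invoking Hörmander's hypoellipticity theorem. First I would write the generator of the joint process $(X_t^\epl,Y_t^\epl,Z_t^\epl)$ from \eqref{eq:system_Levy} in Hörmander form as
\begin{equation*}
\mathcal{L}^\epl = \mathcal{X}_0 + \frac{1}{2}\sum_{i=1}^{2}\mathcal{X}_i^2,
\end{equation*}
where the drift field is
\begin{equation*}
\mathcal{X}_0 = -\theta x\cdot\nabla_x + \frac{1}{\epl}\sqrt{\kappa + \beta\|x\|^2}\, y\cdot\nabla_x - \frac{1}{\epl^2}Ay\cdot\nabla_y + \frac{1}{\delta}(x-z)\cdot\nabla_z,
\end{equation*}
and the diffusion fields are $\mathcal{X}_i = \frac{\sqrt{\eta}}{\epl}\, e_i\cdot\nabla_y$ for $i=1,2$, with $\{e_i\}$ the canonical basis of $\R^2$. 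Analogous bookkeeping would yield the generator $\mathcal{L}$ of the limit system \eqref{eq:system_limit_Levy}, in which case $\mathcal{X}_0 = -Lx\cdot\nabla_x + \frac{1}{\delta}(x-z)\cdot\nabla_z$ and $\mathcal{X}_k = \sqrt{\kappa_0+\beta_0\|x\|^2}\, \partial_{x_k}$ for $k=1,2$.

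Next I would verify the Hörmander bracket condition at every point of the state space. For the colored-noise system, since the $\mathcal{X}_i$ only span the $y$-directions, I compute $[\mathcal{X}_0,\mathcal{X}_i]$ and find
\begin{equation*}
[\mathcal{X}_0,\mathcal{X}_i] = -\frac{\sqrt{\eta}}{\epl^2}\sqrt{\kappa+\beta\|x\|^2}\, e_i\cdot\nabla_x + \frac{\sqrt{\eta}}{\epl^3}Ae_i\cdot\nabla_y,
\end{equation*}
which, combined with $\mathcal{X}_i$, produces all $\nabla_x$ directions because $\kappa>0$ guarantees $\sqrt{\kappa+\beta\|x\|^2}>0$ uniformly. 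A further bracket $[\mathcal{X}_0,[\mathcal{X}_0,\mathcal{X}_i]]$ differentiates the $z$-component $\tfrac{1}{\delta}(x-z)$ of $\mathcal{X}_0$ in the $x_i$ direction and therefore generates a contribution of $\tfrac{1}{\delta}e_i\cdot\nabla_z$, thereby spanning the remaining $z$-directions. For the limit system, the $\mathcal{X}_k$ themselves span $\nabla_x$ directly (since $\kappa_0>0$), and a single bracket $[\mathcal{X}_0,\mathcal{X}_k]$ extracts the $z$-derivatives from $\tfrac{1}{\delta}(x-z)\cdot\nabla_z$. Thus in both cases the Lie algebra generated by $\mathcal{X}_0,\mathcal{X}_1,\mathcal{X}_2$ spans the full tangent space at every $(x,y,z)\in\R^{2+2+2}$ and at every $(x,z)\in\R^{2+2}$, respectively.

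Having verified Hörmander's condition, the conclusion follows immediately from Hörmander's theorem (e.g., \cite[Chapter 6]{Pav14}), which yields the smoothness of the transition densities $\varphi^\epl_t$ and $\varphi_t$ with respect to Lebesgue measure.

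The only mild subtlety, and hence the main point worth making explicit, is that multiplicativity of the noise in the limit SDE does not spoil hypoellipticity: the coefficient $\sqrt{\kappa_0+\beta_0\|x\|^2}$ stays strictly positive thanks to $\kappa_0>0$, so $\mathcal{X}_1,\mathcal{X}_2$ are genuine (non-vanishing) vector fields, and the Hörmander argument proceeds exactly as in the additive setting. Because the verification is otherwise identical to \cref{lem:Lebesgue}, I would state this and omit the duplicated computational details.
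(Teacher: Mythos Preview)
Your proposal is correct and follows essentially the same approach as the paper: write the generator in H\"ormander form, compute the brackets $[\mathcal X_0,\mathcal X_i]$ and $[\mathcal X_0,[\mathcal X_0,\mathcal X_i]]$ to recover the $x$- and $z$-directions respectively, and then invoke H\"ormander's theorem. Your explicit remark that $\kappa>0$ (respectively $\kappa_0>0$) keeps the diffusion fields non-vanishing is a useful addition that the paper leaves implicit.
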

\begin{proof}
Let us first consider the system driven by colored noise. The generator of the joint process $(X_t^\epl, Y_t^\epl, Z_t^\epl)$ is
\begin{equation}
\begin{aligned}
\mathcal L^\epl &= - \theta x \cdot \nabla_x + \frac1\epl \sqrt{\kappa + \beta \norm{x}^2} y \cdot \nabla_x - \frac1{\epl^2} A y \cdot \nabla_y - \frac1\delta (z-x) \cdot \nabla_z + \frac{\eta}{2\epl^2} \Delta_y \\
&\eqdef \mathcal X_0 + \frac{\eta}{2\epl^2} \sum_{i=1}^n \mathcal X_i^2,
\end{aligned}
\end{equation}
where
\begin{equation}
\begin{aligned}
\mathcal X_0 &= - \theta x \cdot \nabla_x + \frac1\epl \sqrt{\kappa + \beta \norm{x}^2} y \cdot \nabla_x - \frac1{\epl^2} A y \cdot \nabla_y - \frac1\delta (z-x) \cdot \nabla_z \\
\mathcal X_i &= \frac{\partial}{\partial y_i}, \qquad i = 1,2.
\end{aligned}
\end{equation}
The commutator $[\mathcal X_0, \mathcal X_i]$ is
\begin{equation}
[\mathcal X_0, \mathcal X_i] = - \frac1\epl \sqrt{\kappa + \beta \norm{x}^2} \frac{\partial}{\partial x_i} + \frac1{\epl^2} a_i \cdot \nabla_y,
\end{equation}
where $a_i$, $i = 1,2$, are the columns of the matrix $A$, and the commutator $[\mathcal X_0, [\mathcal X_0, \mathcal X_i]]$ is
\begin{equation}
\begin{aligned}
[\mathcal X_0, [\mathcal X_0, \mathcal X_i]] &= - \frac{\theta\kappa}{\epl \sqrt{\kappa + \beta \norm{x}^2}} \frac{\partial}{\partial x_i} + \frac{\beta}{\epl^2} \left( x_i y \cdot \nabla_x - y \cdot x \frac{\partial}{\partial x_i} \right) + \frac1{\epl\delta} \sqrt{\kappa + \beta \norm{x}^2} \frac{\partial}{\partial z_i} \\
&\quad - \frac1{\epl^3} \sqrt{\kappa + \beta \norm{x}^2} a_i \cdot \nabla_x + \frac1{\epl^4} a_i \cdot A^\top \nabla_y.
\end{aligned}
\end{equation}
Therefore, for any point $(x,y,z) \in \R^6$, the set
\begin{equation}
\mathcal H = \mathrm{Lie}\left( \mathcal X_i, [\mathcal X_0, \mathcal X_i], [\mathcal X_0, [\mathcal X_0, \mathcal X_i]]; i = 1, \dots, 2 \right)
\end{equation}
spans the tangent space of $\R^6$ at $(x,y,z)$. The desired result then follows from Hörmander’s theorem (see, e.g., \cite[Chapter 6]{Pav14}). Finally, the proof for the limit system $(X_t,Z_t)$ is analogous, and therefore we omit the details.
\end{proof}

We can now write the stationary Fokker--Planck equations for the processes $(X_t^\epl,Y_t^\epl,Z_t^\epl)$ and $(X_t,Z_t)$ given by equations \eqref{eq:system_Levy} and \eqref{eq:system_limit_Levy}, i.e.,
\begin{equation} \label{eq:FP_colored_Levy}
\begin{aligned}
\nabla_x \cdot \left( \theta x \rho^\epl(x,y,z) \right) - \frac1\epl \nabla_x \cdot \left( \sqrt{\kappa + \beta \norm{x}^2} y \rho^\epl(x,y,z) \right) + \frac1\delta \nabla_z \cdot \left( (z-x) \rho^\epl(x,y,z) \right) \\
+ \frac{1}{\epl^2} \nabla_y \cdot \left( Ay\rho^\epl(x,y,z) \right) + \frac{\eta}{2 \epl^2} \Delta_y \rho^\epl(x,y,z) &= 0,
\end{aligned}
\end{equation}
and
\begin{equation} \label{eq:FP_limit_Levy}
\nabla_x \cdot \left( Lx \rho(x,z) \right) + \frac1\delta \nabla_z \cdot \left( (z-x) \rho(x,z) \right) + \frac12 \Delta_x \left( (\kappa_0 + \beta_0 \norm{x}^2) \rho(x,z) \right) = 0,
\end{equation}
respectively.

Employing these Fokker--Planck equations, we show the following technical result, whose identities are analogous to the ones in \cref{lem:magic_formulas}.

\begin{lemma} \label{lem:magic_formulas_Levy}
The following equalities hold true
\begin{equation}
\begin{aligned}
(i)& \quad \frac1\epl \E^{\mu^\epl} \left[ \sqrt{\kappa + \beta \norm{X^\epl}^2} Y^\epl \otimes Z^\epl \right] = \theta \E^{\mu^\epl} \left[ X^\epl \otimes Z^\epl \right] + \frac1\delta \E^{\mu^\epl} \left[ X^\epl \otimes (Z^\epl - X^\epl) \right], \\
(ii)& \quad \frac1\delta \E^\mu \left[ X \otimes (Z - X) \right] = - L \E^\mu \left[ X \otimes Z \right].
\end{aligned}
\end{equation}
\end{lemma}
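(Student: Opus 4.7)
The plan is to prove both identities by multiplying the corresponding stationary Fokker--Planck equation by the tensor test function $x \otimes z$, integrating over the full state space, and integrating by parts. This mirrors exactly the strategy used in \cref{lem:magic_formula_d1,lem:magic_formulas}, but now adapted to the multiplicative-noise coefficient $\sqrt{\kappa + \beta\|x\|^2}$ and the matrix-valued drift $-Lx$ in the limit equation.

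For identity $(i)$, I would multiply the colored-noise Fokker--Planck equation~\eqref{eq:FP_colored_Levy} componentwise by $x_i z_j$ and integrate over $\R^6$. Because $x_i z_j$ does not depend on $y$, both the drift-in-$y$ term $\nabla_y \cdot (Ay\rho^\epl)$ and the diffusion term $\frac{\eta}{2\epl^2}\Delta_y \rho^\epl$ integrate to zero (after one, respectively two, integrations by parts in $y$). The remaining three terms give, using $\nabla_x(x_i z_j) = z_j e_i$ and $\nabla_z(x_i z_j) = x_i e_j$,
\begin{equation}
-\theta\,\E^{\mu^\epl}\!\left[X^\epl_i Z^\epl_j\right]
+\tfrac{1}{\epl}\,\E^{\mu^\epl}\!\left[\sqrt{\kappa+\beta\|X^\epl\|^2}\,Y^\epl_i Z^\epl_j\right]
-\tfrac{1}{\delta}\,\E^{\mu^\epl}\!\left[X^\epl_i(Z^\epl_j-X^\epl_j)\right]=0,
\end{equation}
which is exactly $(i)$ once recast in outer-product form.

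For identity $(ii)$, I would apply the same recipe to the limit Fokker--Planck equation~\eqref{eq:FP_limit_Levy}. The drift term $\nabla_x\cdot(Lx\rho)$ produces $-(L\,\E^\mu[X\otimes Z])_{ij}$ upon integration by parts; the $Z$-drift term gives $-\tfrac{1}{\delta}\E^\mu[X_i(Z_j-X_j)]$ as before; and the diffusion term $\tfrac{1}{2}\Delta_x\bigl((\kappa_0+\beta_0\|x\|^2)\rho\bigr)$ vanishes after two integrations by parts since $\Delta_x(x_i z_j)=0$. Collecting gives $(ii)$ directly.

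There is no serious obstacle in this argument: all manipulations are integration by parts, and the vanishing of boundary terms is justified by \cref{lem:Lebesgue_Levy} together with the decay of the invariant densities $\rho^\epl, \rho$ (implicit in ergodicity under condition~\eqref{eq:condition_L}). The only mild care required is to justify that the integrands $x_i z_j \cdot(\text{FP terms})$ are integrable against $\rho^\epl$ and $\rho$; in particular, for term 2 of $(i)$ one needs the product $\sqrt{\kappa+\beta\|X^\epl\|^2}\,|Y^\epl|\,|Z^\epl|$ to have finite expectation under $\mu^\epl$, which follows from polynomial moment bounds for the ergodic joint process (as in \cref{lem:FP}, extended here to the multiplicative setting using the dissipativity of $-\theta x$). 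With these integrability facts in hand, the componentwise identification of the two sides completes the proof.
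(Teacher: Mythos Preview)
Your proposal is correct and follows essentially the same approach as the paper: multiply the stationary Fokker--Planck equations \eqref{eq:FP_colored_Levy} and \eqref{eq:FP_limit_Levy} by $x\otimes z$, integrate over the full state space, and integrate by parts. The paper's proof is terser and does not spell out the vanishing of the $y$-terms in $(i)$ or of the diffusion term in $(ii)$, nor the integrability considerations you mention, but the argument is identical.
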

\begin{proof}
Let us first consider point $(i)$. Multiplying equation \eqref{eq:FP_colored_Levy} by $x \otimes z$, integrating over $\R^6$ and then by parts, and noting that
\begin{equation}
\begin{aligned}
\int_{\R^6} x \otimes z \nabla_x \cdot \left( \theta x \rho^\epl(x,y,z) \right) &= - \theta \int_{\R^6} x \otimes z \rho^\epl(x,y,z), \\
\frac1\epl \int_{\R^6} x \otimes z \nabla_x \cdot \left( \sqrt{\kappa + \beta \norm{x}^2} y \rho^\epl(x,y,z) \right) &= - \frac1\epl \int_{\R^6} \sqrt{\kappa + \beta \norm{x}^2} y \otimes z \rho^\epl(x,y,z), \\
\frac1\delta \int_{\R^6} x \otimes z \nabla_z \cdot \left( (z - x) \rho^\epl(x,y,z) \right) &= - \frac1\delta \int_{\R^6} x \otimes (z - x) \rho^\epl(x,y,z),
\end{aligned}
\end{equation}
we get the desired result. Analogously, multiplying equation \eqref{eq:FP_limit_Levy} by $x \otimes z$, integrating over $\R^4$ and then by parts, we obtain point $(ii)$, which concludes the proof.
\end{proof}

The asymptotic unbiasedness of the estimator $\widehat L_{\mathrm{exp}}^\delta(X^\epl,T)$ defined in \eqref{eq:exp_Levy} is finally given by the following theorem.

\begin{theorem} \label{thm:exp_ok_eps_Levy}
Let $\widehat L_{\mathrm{exp}}^\delta(X^\epl,T)$ be defined in \eqref{eq:exp_Levy}. Then it holds
\begin{equation}
\lim_{\epl\to0} \lim_{T\to\infty} \widehat L_{\mathrm{exp}}^\delta(X^\epl,T) = L, \qquad a.s.
\end{equation}
\end{theorem}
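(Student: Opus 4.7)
The plan is to imitate the argument developed for \cref{thm:exp_ok_eps} in the additive case, using the two algebraic identities of \cref{lem:magic_formulas_Levy} as the backbone. First I would substitute the SDE for $X_t^\epl$ from \eqref{eq:system_Levy} into the numerator of \eqref{eq:exp_Levy}, yielding the decomposition
\begin{equation}
\widehat L_{\mathrm{exp}}^\delta(X^\epl,T) = \theta I - \frac1\epl \left( \int_0^T \sqrt{\kappa + \beta \norm{X_t^\epl}^2}\, Y_t^\epl \otimes Z_t^\epl \dd t \right) \left( \int_0^T X_t^\epl \otimes Z_t^\epl \dd t \right)^{-1}.
\end{equation}
Unique ergodicity of the joint hypoelliptic process $(X_t^\epl, Y_t^\epl, Z_t^\epl)$ with some invariant measure $\mu^\epl$ (obtained exactly as in \cref{lem:FP} from the hypoellipticity of \cref{lem:Lebesgue_Levy} together with a suitable Lyapunov function) then lets me pass $T \to \infty$ inside both time averages to replace them by their $\mu^\epl$-expectations, almost surely.

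Next I would invoke identity $(i)$ of \cref{lem:magic_formulas_Levy} to rewrite the $1/\epl$ term as $\theta \E^{\mu^\epl}[X^\epl \otimes Z^\epl] + \tfrac1\delta \E^{\mu^\epl}[X^\epl \otimes (Z^\epl - X^\epl)]$. Once multiplied on the right by $\E^{\mu^\epl}[X^\epl \otimes Z^\epl]^{-1}$, the first summand cancels the explicit $\theta I$ and leaves
\begin{equation}
\lim_{T \to \infty} \widehat L_{\mathrm{exp}}^\delta(X^\epl,T) = - \frac1\delta \E^{\mu^\epl}\!\left[ X^\epl \otimes (Z^\epl - X^\epl) \right] \E^{\mu^\epl}\!\left[ X^\epl \otimes Z^\epl \right]^{-1}, \qquad a.s.
\end{equation}
The $\epl$-dependence is now confined to the invariant measure, so the weak convergence $\mu^\epl \toweak \mu$, combined with a uniform-integrability argument, lets me send $\epl \to 0$ and obtain the same expression with $(X,Z)\sim\mu$. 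A final application of identity $(ii)$ of \cref{lem:magic_formulas_Levy} gives $\tfrac1\delta \E^\mu[X \otimes (Z - X)] = -L \E^\mu[X \otimes Z]$, and the two copies of $\E^\mu[X \otimes Z]$ cancel to deliver the announced limit $L$.

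The main obstacle I anticipate is the ergodic analysis for the multiplicative colored-noise system \eqref{eq:system_Levy}. In the additive setting the quadratic Lyapunov function works thanks to the dissipativity of \cref{as:ergodicity}, whereas here the diffusion coefficient $\sqrt{\kappa + \beta \norm{X}^2}$ of the limit SDE grows linearly, so a Foster--Lyapunov argument must leverage the condition $2\theta > \beta_0$ from \eqref{eq:condition_L} and will likely impose a compatibility constraint on $\delta$ in the spirit of \cref{as:assumption_delta}. The same moment bounds are needed to upgrade the weak convergence $\mu^\epl \toweak \mu$ to convergence of the expectations of $X^\epl \otimes Z^\epl$ and $\sqrt{\kappa + \beta \norm{X^\epl}^2}\, Y^\epl \otimes Z^\epl$, which have unbounded test functions. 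A parallel technical point is the invertibility of $\E^{\mu^\epl}[X^\epl \otimes Z^\epl]$ uniformly for small $\epl$, i.e.\ the counterpart of \cref{as:positive_definite}$(ii)$ in this setting, which guarantees that the estimator is well defined on a set of full measure for large $T$ and that its inverse passes to the weak limit.
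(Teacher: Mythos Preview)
Your proposal is correct and follows essentially the same route as the paper: substitute the SDE for $X_t^\epl$ into the estimator, apply the ergodic theorem to pass $T\to\infty$, use identity $(i)$ of \cref{lem:magic_formulas_Levy} to cancel the $\theta$ and remove the $1/\epl$ factor, let $\epl\to0$ via weak convergence of $(X^\epl,Z^\epl)$ to $(X,Z)$, and finish with identity $(ii)$. The technical caveats you raise (Lyapunov function for the multiplicative system, uniform integrability for the unbounded test functions, invertibility of $\E^{\mu^\epl}[X^\epl\otimes Z^\epl]$) are legitimate but are handled in the paper only at the level of standing assumptions and \cref{lem:positive_definite_Levy}, so you are if anything more careful than the original.
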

\begin{proof}
Replacing the first equation from \eqref{eq:systemLevy2D} in the definition of the estimator \eqref{eq:exp_Levy} we have
\begin{equation}
\widehat L_{\mathrm{exp}}^\delta(X^\epl,T) = \theta - \frac1\epl \left( \int_0^T \sqrt{\kappa + \beta \norm{X^\epl_t}^2} Y^\epl_t \otimes Z_t^\epl \dd t \right) \left( \int_0^T X_t^\epl \otimes Z_t^\epl \dd t \right)^{-1},
\end{equation}
and by the ergodic theorem we obtain 
\begin{equation}
\lim_{T\to\infty} \widehat L_{\mathrm{exp}}^\delta(X^\epl,T) = \theta - \frac1\epl \E^{\mu^\epl} \left[ \sqrt{\kappa + \beta \norm{X^\epl}^2} Y^\epl \otimes Z^\epl \right] \E^{\mu^\epl} \left[ X^\epl \otimes Z^\epl \right]^{-1}, \qquad a.s.
\end{equation}
Employing formula $(i)$ in \cref{lem:magic_formulas_Levy} we have
\begin{equation}
\lim_{T\to\infty} \widehat L_{\mathrm{exp}}^\delta(X^\epl,T) = - \frac1\delta \E^{\mu^\epl} \left[ X^\epl \otimes (Z^\epl - X^\epl) \right] \E^{\mu^\epl} \left[ X^\epl \otimes Z^\epl \right]^{-1}, \qquad a.s.,
\end{equation}
which due to the weak convergence of the joint process $(X_t^\epl,Z_t^\epl)$ to $(X_t,Z_t)$ gives
\begin{equation}
\lim_{\epl\to0} \lim_{T\to\infty} \widehat L_{\mathrm{exp}}^\delta(X^\epl,T) = - \frac1\delta \E^\mu \left[ X \otimes (Z - X) \right] \E^\mu \left[ X \otimes Z \right]^{-1}, \qquad a.s.
\end{equation}
Finally, formula $(ii)$ in \cref{lem:magic_formulas_Levy} yields the desired result.
\end{proof}

\subsection{Asymptotic unbiasedness for SGDCT estimator}

We proceed similarly to the analysis in \cref{sec:proofs}. We recall that, for the sake of the proof of convergence for the SGDCT estimator, we wrap all the processes in the $d$-dimensional torus $\mathbb T^d$, and therefore we work under the additional assumption that the state space is compact, as explained in \cref{rem:compact_state_space,rem:compact_state_space_2}. The first step consists in showing that \cref{as:positive_definite} is satisfied. 

\begin{lemma} \label{lem:positive_definite_Levy}
There exists a constant $K>0$ such that for all $V \in \R^{2\times2}$
\begin{equation}
V \E^{\mu^\epl} \left[ X^\epl \otimes Z^\epl \right] \colon V \ge K \norm{V}^2.
\end{equation}
\end{lemma}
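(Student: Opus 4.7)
The plan is to first establish the coercivity for the limit matrix $M \defeq \E^\mu[X\otimes Z]$ by exploiting the rotational symmetry of the limit SDE~\eqref{eq:system_limit_Levy}, and then transfer the bound to $\E^{\mu^\epl}[X^\epl\otimes Z^\epl]$ for sufficiently small $\epl$ via the perturbation argument already used in \cref{rem:positive_definite}.

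For the limit case, I would exploit that $L = aI + bJ$ with $a = \theta - \beta_0/2$ and $b = \gamma\beta_0/(2\alpha)$ commutes with every planar rotation $R_\phi = \cos\phi\, I + \sin\phi\, J$, and that the diffusion coefficient $\sqrt{\kappa_0+\beta_0\norm{x}^2}$ is rotationally invariant. Uniqueness of the invariant measure therefore forces $\mu$ to inherit the full rotational symmetry, so that $\Sigma_X \defeq \E^\mu[X\otimes X] = \sigma^2 I$ for some $\sigma^2>0$, determined by the scalar reduction of the stationary Lyapunov equation $L\Sigma_X + \Sigma_X L^\top = (\kappa_0+\beta_0\trace\Sigma_X)\,I$, obtained from Itô's formula applied to $XX^\top$. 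Since the drift is linear, conditional expectations satisfy $\E[X_t\mid\mathcal F_{t-u}] = e^{-Lu}X_{t-u}$ for $u\ge 0$, so $\E^\mu[X_t\otimes X_{t-u}] = \sigma^2 e^{-Lu}$. Combining this with the stationary representation $Z_t = \frac1\delta\int_{-\infty}^{t} e^{-(t-s)/\delta}X_s\dd s$ and Fubini yields the closed form
\begin{equation}
M = \frac{\sigma^2}{\delta}\int_0^\infty e^{-u(L+I/\delta)}\dd u = \frac{\sigma^2}{\delta}\Bigl(L+\tfrac{1}{\delta}I\Bigr)^{-1}.
\end{equation}

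Writing $c = a + 1/\delta>0$, the identity $(cI+bJ)(cI-bJ) = (c^2+b^2)\,I$ gives $M = \frac{\sigma^2}{\delta(c^2+b^2)}(cI-bJ)$, and since $J$ is skew-symmetric the symmetric part of $M$ is the positive scalar multiple of the identity $M_{\mathrm{sym}} = K_0\,I$ with $K_0 = \sigma^2 c/(\delta(c^2+b^2))$. Because $V^\top V$ is symmetric, it then follows that $V M \colon V = M_{\mathrm{sym}} \colon V^\top V = K_0\norm{V}^2$ for every $V\in\R^{2\times 2}$, which is the desired coercivity at $\epl=0$. The passage to $\epl>0$ is routine: weak convergence of $(X^\epl,Z^\epl)$ to $(X,Z)$ together with uniform integrability (automatic in the torus setting of \cref{rem:compact_state_space_2}) gives $\E^{\mu^\epl}[X^\epl\otimes Z^\epl] \to M$, so for any $0<\mathfrak d<K_0$ and all sufficiently small $\epl$ the statement holds with $K = K_0 - \mathfrak d$.

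The main obstacle I anticipate is the rotational-symmetry reduction of $\Sigma_X$. Because the noise is multiplicative and the Lyapunov equation couples the covariance to its own trace, the scalar ansatz $\Sigma_X = \sigma^2 I$ is not a priori obvious; it must be justified from invariance of the SDE under the one-parameter rotation group $\{R_\phi\}$ together with uniqueness of the invariant measure, itself guaranteed by the hypoellipticity of \cref{lem:Lebesgue_Levy} and the standing ergodicity hypotheses (which implicitly strengthen~\eqref{eq:condition_L} enough to make the scalar equation for $\sigma^2$ admit a positive solution). Once this reduction is in place, the remaining step is an explicit $2\times 2$ computation and the $\epl>0$ bound is a standard perturbation.
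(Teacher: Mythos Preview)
Your argument is correct and reaches the same formula $M=\sigma^2(I+\delta L)^{-1}$ as the paper, but by a different route. The paper never invokes rotational symmetry or the autocorrelation function; instead it derives the Lyapunov equation $L\Sigma_X+\Sigma_X L^\top=\E^\mu[\kappa_0+\beta_0\norm{X}^2]\,I$ by testing the stationary Fokker--Planck equation~\eqref{eq:FP_limit_Levy} against $x\otimes x$, solves it directly from the explicit form of $L$, and then obtains $M=(I+\delta L)^{-1}\Sigma_X$ by reusing the already-proved identity of \cref{lem:magic_formulas_Levy}$(ii)$, namely $\tfrac1\delta\E^\mu[X\otimes(Z-X)]=-L\,\E^\mu[X\otimes Z]$. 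Your probabilistic computation---conditional expectation $\E[X_t\mid\mathcal F_{t-u}]=e^{-Lu}X_{t-u}$ for the linear drift, combined with the stationary integral representation of $Z$---is equally valid and perhaps more transparent in isolation, while the paper's approach has the advantage of recycling machinery needed anyway for the main theorems. Your final step, extracting the symmetric part of $M$ to get $VM\!:\!V=K_0\norm{V}^2$ exactly, is slightly cleaner than the paper's vector inequality followed by an appeal to \cref{rem:positive_definite}; the passage to small $\epl$ is identical in both.
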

\begin{proof}
We first prove that the statement holds true for vectors $v \in \R^2$ and for the limit equation, i.e.,
\begin{equation}
v^\top \E^\mu \left[ X \otimes Z \right] v \ge K \norm{v}^2.
\end{equation}
The result then follows from \cref{rem:positive_definite}. Multiplying equation \eqref{eq:FP_limit_Levy} by $x \otimes x$, integrating over $\R^4$ and then by parts, and noting that
\begin{equation}
\begin{aligned}
\int_{\R^4} x \otimes x \nabla_x \cdot \left( L x \rho(x,z) \right) &= - L \int_{\R^6} x \otimes x \rho^\epl(x,y,z) - \int_{\R^6} x \otimes x \rho^\epl(x,y,z) L^\top, \\
\frac12 \int_{\R^4} x \otimes x \Delta_x \left( (\kappa_0 + \beta_0 \norm{x}^2) \rho(x,z) \right) &= \int_{\R^4} (\kappa_0 + \beta_0 \norm{x}^2) \rho(x,z) I,
\end{aligned}
\end{equation}
we get
\begin{equation}
L \E^\mu \left[ X \otimes X \right] + \E^\mu \left[ X \otimes X \right] L^\top = \E^\mu \left[ \kappa_0 + \beta_0 \norm{X}^2 \right] I.
\end{equation}
By definition of $L$ in \eqref{eq:L_def}, the solution of the continuous Lyapunov equation is given by
\begin{equation}
\E^\mu \left[ X \otimes X \right] = \frac{\E^\mu \left[ \kappa_0 + \beta_0 \norm{X}^2 \right]}{2 \theta - \beta_0} I.
\end{equation}
We consider the equation in \cref{lem:magic_formulas_Levy}$(ii)$, which implies
\begin{equation}
\E^\mu \left[ X \otimes Z \right] = (I + \delta L)^{-1} \E \left[ X \otimes X \right] = \frac{\E^\mu \left[ \kappa_0 + \beta_0 \norm{X}^2 \right]}{2 \theta - \beta_0} (I + \delta L)^{-1}.
\end{equation}
Noting that
\begin{equation}
\E^\mu \left[ \kappa_0 + \beta_0 \norm{X}^2 \right] \ge \kappa_0,
\end{equation}
and that 
\begin{equation}
\begin{aligned}
(I + \delta L)^{-1} &= \left[ \left( 1 + \delta \theta - \frac{\delta \beta_0}2\right) I + \frac{\delta \gamma \beta_0}{2 \alpha} J \right]^{-1} \\
&= \left[ \left( 1 + \delta \theta - \frac{\delta \beta_0}2\right)^2 + \frac{\delta^2 \gamma^2 \beta_0^2}{4 \alpha^2} \right]^{-1} \left[ \left( 1 + \delta \theta - \frac{\delta \beta_0}2\right) I - \frac{\delta \gamma \beta_0}{2 \alpha} J \right],
\end{aligned}
\end{equation}
we obtain
\begin{equation}
v^\top \E^\mu \left[ X \otimes Z \right] v \ge \kappa_0 \left[ \left( 1 + \delta \theta - \frac{\delta \beta_0}2\right)^2 + \frac{\delta^2 \gamma^2 \beta_0^2}{4 \alpha^2} \right]^{-1} \frac{2 + 2\delta\theta - \delta\beta_0}{4\theta - 2\beta_0} \norm{v}^2 \eqdef K \norm{v}^2, 
\end{equation}
where $K>0$ due to \eqref{eq:condition_L}, and which concludes the proof.
\end{proof}

The second ingredient necessary to prove the convergence of the SGDCT estimator $\widetilde L_{\mathrm{exp},t}^{\delta,\epl}$ is bounding its moments uniformly in time.

\begin{lemma} \label{lem:bounded_moments_Levy}
Let the estimators $L_{\mathrm{exp},t}^{\delta,\epl}$ be defined by the SDE \eqref{eq:SGDCT_exp_Levy}. For all $p\ge1$ there exists a constant $C>0$ independent of time such that following bound holds
\begin{equation}
\E \left[ \norm{\widetilde L_{\mathrm{exp},t}^{\delta,\epl}}^p \right] \le C.
\end{equation}
\end{lemma}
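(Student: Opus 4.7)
The strategy mirrors that of \cref{lem:bounded_moments}$(iv)$, adapted to the multiplicative/Lévy area setting. The plan is to substitute the dynamics of $X_t^\epl$ from \eqref{eq:systemLevy2D} into \eqref{eq:SGDCT_exp_Levy} and rewrite the drift by adding and subtracting the stationary expectation $\E^{\mu^\epl}[X^\epl \otimes Z^\epl]$, so that the SDE takes the form
\begin{equation*}
\begin{aligned}
\d \widetilde L_{\mathrm{exp},t}^{\delta,\epl} &= - \xi_t \widetilde L_{\mathrm{exp},t}^{\delta,\epl} \E^{\mu^\epl}[X^\epl \otimes Z^\epl] \dd t + \xi_t \theta X_t^\epl \otimes Z_t^\epl \dd t \\
&\quad - \xi_t \widetilde L_{\mathrm{exp},t}^{\delta,\epl} \bigl( X_t^\epl \otimes Z_t^\epl - \E^{\mu^\epl}[X^\epl \otimes Z^\epl] \bigr) \dd t - \frac{\xi_t}{\epl} \sqrt{\kappa + \beta \|X_t^\epl\|^2}\, Y_t^\epl \otimes Z_t^\epl \dd t.
\end{aligned}
\end{equation*}
Using Itô's lemma to derive an SDE for $\|\widetilde L_{\mathrm{exp},t}^{\delta,\epl}\|$ and invoking the coercivity bound from \cref{lem:positive_definite_Levy} to replace $\E^{\mu^\epl}[X^\epl \otimes Z^\epl]$ with $KI$, I then apply the comparison theorem \cite[Theorem~1.1]{IkW77} against an auxiliary process $\widetilde{\mathcal L}_{\mathrm{exp},t}^{\delta,\epl}$ which satisfies the analogous SDE with $-\xi_t K \widetilde{\mathcal L}_{\mathrm{exp},t}^{\delta,\epl}$ as the dominant dissipative drift, so that $\|\widetilde L_{\mathrm{exp},t}^{\delta,\epl}\| \le \|\widetilde{\mathcal L}_{\mathrm{exp},t}^{\delta,\epl}\|$ almost surely.

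Next, I write $\widetilde{\mathcal L}_{\mathrm{exp},t}^{\delta,\epl}$ as an integral equation using the integrating factor $e^{-K\int_s^t \xi_r \dd r} = ((b+s)/(b+t))^{aK}$, analogous to equations \eqref{eq:moments_intergral_equation}--\eqref{eq:learn_rate_integral}. The terms involving $\theta X_s^\epl \otimes Z_s^\epl$ and $\widetilde{\mathcal L}_{\mathrm{exp},s}^{\delta,\epl}(X_s^\epl \otimes Z_s^\epl - \E^{\mu^\epl}[\,\cdot\,])$ are controlled exactly as in \cref{lem:bounded_moments}, exploiting the boundedness of $X_t^\epl, Z_t^\epl$ on the torus and applying \cref{lem:Poisson_expansion} to remove the centered fluctuation $X_s^\epl \otimes Z_s^\epl - \E^{\mu^\epl}[X^\epl \otimes Z^\epl]$ via an Itô's lemma computation on $(b+t)^{aK-1} \vartheta\, \psi^\epl(x,y,z)$, trading the oscillatory integrand for a boundary term of order $1/(b+t)$ plus a martingale.

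The main obstacle is the singular term $\epl^{-1} \sqrt{\kappa + \beta\|X^\epl\|^2}\, Y^\epl \otimes Z^\epl$, which a priori diverges as $\epl \to 0$. The plan is to apply \cref{lem:Poisson_expansion} to the centered function
\begin{equation*}
h^\epl(x,y,z) = \sqrt{\kappa + \beta\|x\|^2}\, y \otimes z - \E^{\mu^\epl}\!\bigl[\sqrt{\kappa+\beta\|X^\epl\|^2}\,Y^\epl \otimes Z^\epl\bigr],
\end{equation*}
whose mean vanishes by \cref{lem:magic_formulas_Levy}$(i)$ combined with a trivial rescaling (the mean contributes a bounded drift after multiplying by $\epl^{-1}$ only if one first rewrites it using that identity, which turns the $1/\epl$ factor into a finite $\theta$-plus-$1/\delta$ contribution). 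Applying Itô's formula to $(b+t)^{aK-1}\psi^\epl(x,y,z)$, the leading $\epl^{-2}$ piece of $\mathcal L^\epl \psi^\epl$ reproduces $h^\epl$ up to $\epl^{-1}$-scaled remainder terms whose time integral, by integration by parts against the integrating factor, is again of order $1/(b+t)$ plus a martingale whose variance is $\epl$-uniformly bounded by the Itô isometry. Then, proceeding as in \cref{lem:bounded_moments}$(iv)$ and using \cite[Theorem 1]{Zak67}, Jensen's inequality, and Grönwall's lemma applied to $(b+t)^{aK}\E[\|\widetilde{\mathcal L}_{\mathrm{exp},t}^{\delta,\epl}\|^{2q}]$, one obtains boundedness of all even moments uniformly in $t$ and $\epl$; odd moments follow by Hölder's inequality, and the desired bound on $\widetilde L_{\mathrm{exp},t}^{\delta,\epl}$ follows from the comparison inequality.
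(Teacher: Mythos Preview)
Your plan matches the paper's proof almost exactly: rewrite the SDE by centering with $\E^{\mu^\epl}[X^\epl\otimes Z^\epl]$, pass to an auxiliary process with dissipation $-\xi_t K$ via \cref{lem:positive_definite_Levy} and the comparison theorem, write the integral equation with the integrating factor $((b+s)/(b+t))^{aK}$, apply \cref{lem:Poisson_expansion} to the centered fluctuation $X^\epl\otimes Z^\epl-\E^{\mu^\epl}[X^\epl\otimes Z^\epl]$ via It\^o's formula on $(b+t)^{aK-1}\vartheta\,\psi^\epl$, and close with Gr\"onwall.

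The one place where you diverge is the paragraph on the ``main obstacle'', namely the term $\epl^{-1}\sqrt{\kappa+\beta\|X^\epl\|^2}\,Y^\epl\otimes Z^\epl$. The paper does \emph{not} introduce a second Poisson equation for this term: it simply keeps it in the integral representation and bounds it directly using the boundedness of $X^\epl,Y^\epl,Z^\epl$ on the torus together with $\int_0^t (b+s)^{aK-1}(b+t)^{-aK}\dd s\le (aK)^{-1}$. The resulting constant $C$ therefore depends on $\epl$, but that is permitted: the lemma only asserts independence of \emph{time}. Your extra argument attempts to upgrade this to $\epl$-uniformity, which is more than is claimed and is not needed downstream (the subsequent \cref{pro:limit_SGDCT_t_YESfilter_levy} takes $t\to\infty$ at fixed $\epl$). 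Moreover, the multiscale scaling heuristic you sketch---that the $\epl^{-2}$ part of $\mathcal L^\epl\psi^\epl$ reproduces $h^\epl$ with only $\epl^{-1}$ remainders---would require a careful expansion of $\psi^\epl$ in powers of $\epl$ that is nowhere established in the paper; \cref{lem:Poisson_expansion} gives existence and boundedness of $\psi^\epl$ for each fixed $\epl$, not uniform-in-$\epl$ bounds. So this part is both unnecessary and, as written, not justified. Drop that paragraph and the proof is essentially the paper's.
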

\begin{proof}
We only give a sketch of the proof since it follows the same steps of the proof of \cref{lem:bounded_moments}. Let us consider the SDE for the estimator and rewrite it as
\begin{equation}
\begin{aligned}
\d \widetilde L^{\delta,\epl}_{\mathrm{exp},t} &= - \xi_t \widetilde L_{\mathrm{exp},t}^{\delta,\epl} \E^{\mu^\epl} \left[ X^\epl \otimes Z^\epl \right] \dd t + \xi_t \theta X_t^\epl \otimes Z_t^\epl \dd t - \frac1\epl \xi_t \sqrt{\kappa + \beta \norm{X_t^\epl}^2} Y_t^\epl \otimes Z_t^\epl \dd t \\
&\quad - \xi_t \widetilde L^{\delta,\epl}_{\mathrm{exp},t} \left( X_t^\epl \otimes Z_t^\epl - \E^{\mu^\epl} \left[ X^\epl \otimes Z^\epl \right] \right) \dd t.
\end{aligned}
\end{equation}
We then introduce the auxiliary process $\widetilde {\mathfrak L}^{\delta,\epl}_{\mathrm{exp},t}$ defined by the SDE
\begin{equation}
\begin{aligned}
\d \widetilde{\mathfrak L}^{\delta,\epl}_{\mathrm{exp},t} &= - \xi_t K \widetilde{\mathfrak L}_{\mathrm{exp},t}^{\delta,\epl} \dd t + \xi_t \theta X_t^\epl \otimes Z_t^\epl \dd t - \frac1\epl \xi_t \sqrt{\kappa + \beta \norm{X_t^\epl}^2} Y_t^\epl \otimes Z_t^\epl \dd t \\
&\quad - \xi_t \widetilde{\mathfrak L}^{\delta,\epl}_{\mathrm{exp},t} \left( X_t^\epl \otimes Z_t^\epl - \E^{\mu^\epl} \left[ X^\epl \otimes Z^\epl \right] \right) \dd t,
\end{aligned}
\end{equation}
which can be written as
\begin{equation}
\begin{aligned}
\widetilde{\mathfrak L}^{\delta,\epl}_{\mathrm{exp},t} &= e^{-K \int_0^t \xi_r \dd r} \widetilde{\mathfrak L}^{\delta,\epl}_{\mathrm{exp},0} + \int_0^t \xi_s e^{-K \int_s^t \xi_r \dd r} \theta X_s^\epl \otimes Z_s^\epl \dd s \\
&\quad - \frac1\epl \int_0^t \xi_s e^{-K \int_s^t \xi_r \dd r} \sqrt{\kappa + \beta \norm{X_s^\epl}^2} Y_s^\epl \otimes Z_s^\epl \dd s \\
&\quad - \int_0^t \xi_s e^{-K \int_s^t \xi_r \dd r} \xi_t \widetilde{\mathfrak L}^{\delta,\epl}_{\mathrm{exp},s} \left( X_s^\epl \otimes Z_s^\epl - \E^{\mu^\epl} \left[ X^\epl \otimes Z^\epl \right] \right) \dd s,
\end{aligned}
\end{equation}
and which, by \cref{lem:positive_definite_Levy} and due to the comparison theorem \cite[Theorem 1.1]{IkW77}, is such that
\begin{equation} \label{eq:comparison_Levy}
\Pr \left( \norm{\widetilde L_{\mathrm{exp},t}^{\delta,\epl}} \le \norm{\widetilde {\mathfrak L}_{\mathrm{exp},t}^{\delta,\epl}}, t \ge 0 \right) = 1.
\end{equation}
We now proceed analogously to the proof of \cref{lem:bounded_moments}, we consider the solution $\psi^\epl$ of the Poisson problem for the generator and we apply Itô's lemma. We remark that \cref{lem:Poisson_expansion} holds true also for the generators of the processes \eqref{eq:system_Levy} and \eqref{eq:system_limit_Levy}, since the hypoelliptic setting is guaranteed by \cref{lem:Lebesgue_Levy}. After some computation we get
\begin{equation}
\begin{aligned}
\widetilde{\mathfrak L}_{\mathrm{exp},t}^{\delta,\epl} &= \widetilde{\mathfrak L}_{\mathrm{exp},0}^{\delta,\epl} \left( \frac{b}{b+t} \right)^{aK} - \frac{a}{b+t} \widetilde{\mathfrak L}_{\mathrm{exp},t}^{\delta,\epl} \psi^\epl(X_t^\epl, Y_t^\epl, Z_t^\epl) + \frac{a b^{aK-1}}{(b+t)^{aK}} \widetilde{\mathfrak L}_{\mathrm{exp},0}^{\delta,\epl} \psi^\epl(X_0^\epl, Y_0^\epl, Z_0^\epl) \\
&\quad + a \int_0^t \frac{(b+s)^{aK-1}}{(b+t)^{aK}} \theta X_s^\epl \otimes Z_s^\epl \dd s - \frac{a}\epl \int_0^t \frac{(b+s)^{aK-1}}{(b+t)^{aK}} \sqrt{\kappa + \beta \norm{X_s^\epl}^2} Y_s^\epl \otimes Z_s^\epl \dd s \\
&\quad + a(aK-1) \int_0^t \frac{(b+s)^{aK-2}}{(b+t)^{aK}} \widetilde{\mathfrak L}_{\mathrm{exp},s}^{\delta,\epl} \psi^\epl(X_s^\epl, Y_s^\epl, Z_s^\epl) \dd s \\
&\quad + \frac{a}\epl \int_0^t \frac{(b+s)^{aK-1}}{(b+t)^{aK}} \widetilde{\mathfrak L}_{\mathrm{exp},s}^{\delta,\epl} \nabla_y \psi^\epl(X_s^\epl, Y_s^\epl, Z_s^\epl) \cdot \sigma \d W_s \\
&\quad - a^2 \int_0^t \frac{(b+s)^{aK-2}}{(b+t)^{aK}} \widetilde{\mathfrak L}_{\mathrm{exp},s}^{\delta,\epl} \left( KI + X_s^\epl \otimes Z_s^\epl - \E^{\mu^\epl} \left[ X^\epl \otimes Z^\epl \right] \right) \psi^\epl(X_s^\epl, Y_s^\epl, Z_s^\epl) \dd s \\
&\quad + a^2 \int_0^t \frac{(b+s)^{aK-2}}{(b+t)^{aK}} \left( \theta X_s^\epl \otimes Z_s^\epl + \frac1\epl \sqrt{\kappa + \beta \norm{X_s^\epl}^2} Y_s^\epl \otimes Z_s^\epl \right) \psi^\epl(X_s^\epl, Y_s^\epl, Z_s^\epl) \dd s,
\end{aligned}
\end{equation}
which implies
\begin{equation}
\E \left[ \norm{\widetilde{\mathfrak L}_{\mathrm{exp},t}^{\delta,\epl}}^{2q} \right] \le C + C \int_0^t \frac{(b+s)^{aK-2}}{(b+t)^{aK}} \E \left[ \norm{\widetilde{\mathfrak L}_{\mathrm{exp},s}^{\delta,\epl}}^{2q} \right] \dd s.
\end{equation}
Finally, the desired result follows from Grönwall's inequality and equation \eqref{eq:comparison_Levy}.
\end{proof}

We now compute the limit of the SGDCT estimator as time tends to infinity. The proof of next lemma is similar to the proof of \cref{pro:limit_SGDCT_t_YESfilter}.

\begin{proposition} \label{pro:limit_SGDCT_t_YESfilter_levy}
Let $X_t^\epl, Y_t^\epl, Z_t^\epl$ be solutions of system \eqref{eq:system_Levy}. Then it holds
\begin{equation}
\lim_{t\to\infty} \widetilde L_{\mathrm{exp},t}^{\delta,\epl} = \theta - \frac1\epl \E^{\mu^\epl} \left[ \sqrt{\kappa + \beta \norm{X^\epl}^2} Y^\epl \otimes Z^\epl \right] \E^{\mu^\epl} \left[ X^\epl \otimes Z^\epl \right]^{-1}, \qquad \text{in } L^2.
\end{equation}
\end{proposition}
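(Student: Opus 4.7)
The plan is to mirror the proof of \cref{pro:limit_SGDCT_t_YESfilter}, with the multiplicative coefficient $\sqrt{\kappa + \beta \norm{X_t^\epl}^2}$ playing the role of the constant $G$ and the identity playing the role of $f$. Set
\[
\bar L^\epl \defeq \theta I - \frac1\epl \E^{\mu^\epl}\!\left[ \sqrt{\kappa + \beta \norm{X^\epl}^2}\, Y^\epl \otimes Z^\epl \right] \E^{\mu^\epl}\!\left[ X^\epl \otimes Z^\epl \right]^{-1}, \qquad \Delta_t^\epl \defeq \widetilde L_{\mathrm{exp},t}^{\delta,\epl} - \bar L^\epl.
\]
Substituting the first line of \eqref{eq:system_Levy} into \eqref{eq:SGDCT_exp_Levy} and adding and subtracting the invariant expectations of $X^\epl \otimes Z^\epl$ and of $\sqrt{\kappa + \beta \norm{X^\epl}^2}\, Y^\epl \otimes Z^\epl$ yields an SDE for $\Delta_t^\epl$ of exactly the form treated in the proof of \cref{pro:limit_SGDCT_t_YESfilter}: a strictly dissipative drift $-\xi_t \Delta_t^\epl \, \E^{\mu^\epl}[X^\epl \otimes Z^\epl]\dd t$ plus two centered fluctuation terms, the first multiplied by $\widetilde L_{\mathrm{exp},t}^{\delta,\epl} - \theta I$ and the second by $1/\epl$. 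Applying Itô to $\Gamma_t^\epl \defeq \norm{\Delta_t^\epl}^2$, using the coercivity in \cref{lem:positive_definite_Levy} together with its matrix version (cf.\ \cref{rem:positive_definite}) to turn the dissipative drift into $-2K\xi_t \Gamma_t^\epl$, and then invoking the comparison principle, one obtains the decomposition $\Gamma_t^\epl \le I_t^1 + I_t^2 + I_t^3$, in which $I_t^1 = \norm{\Delta_0^\epl}^2 \bigl(b/(b+t)\bigr)^{2aK} \to 0$ by \eqref{eq:learn_rate_integral}, while $I_t^2, I_t^3$ are time integrals of the two centered fluctuations weighted by $\xi_s e^{-2K \int_s^t \xi_r \dd r}$.

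For $I_t^2$ and $I_t^3$, the strategy is to invoke \cref{lem:Poisson_expansion}, which applies to the generator of \eqref{eq:system_Levy} thanks to the hypoellipticity in \cref{lem:Lebesgue_Levy}. Solve the two Poisson equations with right-hand sides
\[
h_1^\epl(x,y,z) = x \otimes z - \E^{\mu^\epl}[X^\epl \otimes Z^\epl], \qquad h_2^\epl(x,y,z) = \sqrt{\kappa + \beta \norm{x}^2}\, y \otimes z - \E^{\mu^\epl}\!\left[ \sqrt{\kappa + \beta \norm{X^\epl}^2}\, Y^\epl \otimes Z^\epl \right].
\]
On the compact torus both $h_j^\epl$ lie in $W^{k,\infty}$, so the Poisson solutions $\psi_1^\epl, \psi_2^\epl$ are bounded together with their derivatives. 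Applying Itô's formula, exactly as in \eqref{eq:twice}, to the time-weighted test functions $(b+t)^{2aK-1}(\widetilde L_{\mathrm{exp},t}^{\delta,\epl} - \theta I)\psi_1^\epl \colon \Delta_t^\epl$ and $(b+t)^{2aK-1} \psi_2^\epl \colon \Delta_t^\epl$ rewrites the centered integrands as $\mathcal L^\epl \psi_j^\epl$ and produces boundary terms of order $(b+t)^{-1}$ and $(b+t)^{-2aK}$, lower-order time integrals of the same order, and a stochastic integral in $W_t$. Taking expectations, and using the uniform-in-time moment bound \cref{lem:bounded_moments_Levy} together with Itô's isometry for the martingale part, gives $\E[I_t^j] \le C \bigl( (b+t)^{-1} + (b+t)^{-2aK} \bigr)$ for $j = 2,3$, whence $\E[\Gamma_t^\epl] \to 0$, which is the claimed $L^2$-convergence.

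The main obstacle, relative to the proof of \cref{pro:limit_SGDCT_t_YESfilter}, is the multiplicative coefficient $\sqrt{\kappa + \beta \norm{x}^2}$ appearing in the noise: it must be smooth with derivatives that remain bounded for $h_2^\epl$ to belong to $W^{k,\infty}$ and hence for $\psi_2^\epl$ to inherit $W^{k,\infty}$ regularity; both hold on the compact torus that we adopt as state space (see \cref{rem:compact_state_space_2}). Moreover the cross terms produced when Itô's lemma hits the factor $\widetilde L^{\delta,\epl}_{\mathrm{exp},t} - \theta I$ in the $\psi_1^\epl$ expansion now involve $(1/\epl)\sqrt{\kappa + \beta \norm{X_s^\epl}^2}\, Y_s^\epl \otimes Z_s^\epl$; these are controlled by the uniform moment estimate \cref{lem:bounded_moments_Levy} in the same way that the analogous $GY^\epl_s \otimes f(Z^\epl_s)/\epl$ terms were handled in the additive case. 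All remaining bookkeeping parallels the additive case verbatim.
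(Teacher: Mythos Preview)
Your proposal is correct and follows essentially the same route as the paper's own proof: rewrite the estimator SDE around the target $\bar L^\epl$, apply It\^o to $\norm{\Delta_t^\epl}^2$, use the coercivity of \cref{lem:positive_definite_Levy} and the comparison principle to obtain the decomposition $I_t^1+I_t^2+I_t^3$, and then control the centered fluctuation terms via the Poisson-equation/It\^o argument (justified by \cref{lem:Lebesgue_Levy} and \cref{lem:Poisson_expansion}) together with the moment bound \cref{lem:bounded_moments_Levy}. In fact the paper stops short and refers back to \cref{pro:limit_SGDCT_t_YESfilter} for the final bounds on $I_t^2,I_t^3$, whereas you spell out explicitly the two Poisson right-hand sides $h_1^\epl,h_2^\epl$ and correctly flag that the only new ingredient is handling the multiplicative factor $\sqrt{\kappa+\beta\norm{x}^2}$, which is harmless on the compact state space.
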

\begin{proof}
Let us consider the SDE for the estimator \eqref{eq:SGDCT_exp_Levy} and rewrite it as
\begin{equation}
\begin{aligned}
\d & \left( \widetilde L_{\mathrm{exp},t}^{\delta,\epl} - \theta + \frac1\epl \E^{\mu^\epl} \left[ \sqrt{\kappa + \beta \norm{X^\epl}^2} Y^\epl \otimes Z^\epl \right] \E^{\mu^\epl} \left[ X^\epl \otimes Z^\epl \right]^{-1} \right) \\
&= - \xi_t \left( \widetilde L_{\mathrm{exp},t}^{\delta,\epl} - \theta + \frac1\epl \E^{\mu^\epl} \left[ \sqrt{\kappa + \beta \norm{X^\epl}^2} Y^\epl \otimes Z^\epl \right] \E^{\mu^\epl} \left[ X^\epl \otimes Z^\epl \right]^{-1} \right) \E^{\mu^\epl} \left[ X^\epl \otimes Z^\epl \right] \dd t \\
&\quad - \xi_t \left( \widetilde L_{\mathrm{exp},t}^{\delta,\epl} - \theta \right) \left( X_t^\epl \otimes Z_t^\epl - \E^{\mu^\epl} \left[ X^\epl \otimes Z^\epl \right] \right) \dd t \\
&\quad - \frac1\epl \xi_t \left( \sqrt{\kappa + \beta \norm{X_t^\epl}^2} Y_t^\epl \otimes Z_t^\epl - \E^{\mu^\epl} \left[ \sqrt{\kappa + \beta \norm{X^\epl}^2} Y^\epl \otimes Z^\epl \right] \right) \dd t.
\end{aligned}
\end{equation}
Letting
\begin{equation}
\Delta_t^\epl = \widetilde L_{\mathrm{exp},t}^{\delta,\epl} - \theta + \frac1\epl \E^{\mu^\epl} \left[ \sqrt{\kappa + \beta \norm{X^\epl}^2} Y^\epl \otimes Z^\epl \right] \E^{\mu^\epl} \left[ X^\epl \otimes Z^\epl \right]^{-1} \quad \text{and} \quad \Gamma_t^\epl = \norm{\Delta_t^\epl}^2,
\end{equation}
by Itô's lemma we have
\begin{equation}
\begin{aligned}
\d \Gamma_t^\epl &= - 2 \xi_t \Delta_t^\epl \E^{\mu^\epl} \left[ X^\epl \otimes Z^\epl \right] \colon \Delta_t^\epl \dd t \\
&\quad - 2 \xi_t \left( \widetilde L_{\mathrm{exp},t}^{\delta,\epl} - \theta \right) \left( X_t^\epl \otimes Z_t^\epl - \E^{\mu^\epl} \left[ X^\epl \otimes Z^\epl \right] \right) \colon \Delta_t^\epl \dd t \\
&\quad - \frac2\epl \xi_t \left( \sqrt{\kappa + \beta \norm{X_t^\epl}^2} Y_t^\epl \otimes Z_t^\epl - \E^{\mu^\epl} \left[ \sqrt{\kappa + \beta \norm{X^\epl}^2} Y^\epl \otimes Z^\epl \right] \right) \colon \Delta_t^\epl \dd t,
\end{aligned}
\end{equation}
which due to \cref{lem:positive_definite_Levy} implies
\begin{equation}
\begin{aligned}
\d \Gamma_t^\epl &\le - 2 K \xi_t \Gamma_t^\epl \dd t \\
&\quad - 2 \xi_t \left( \widetilde L_{\mathrm{exp},t}^{\delta,\epl} - \theta \right) \left( X_t^\epl \otimes Z_t^\epl - \E^{\mu^\epl} \left[ X^\epl \otimes Z^\epl \right] \right) \colon \Delta_t^\epl \dd t \\
&\quad - \frac2\epl \xi_t \left( \sqrt{\kappa + \beta \norm{X_t^\epl}^2} Y_t^\epl \otimes Z_t^\epl - \E^{\mu^\epl} \left[ \sqrt{\kappa + \beta \norm{X^\epl}^2} Y^\epl \otimes Z^\epl \right] \right) \colon \Delta_t^\epl \dd t.
\end{aligned}
\end{equation}
Then, by the comparison principle we obtain
\begin{equation}
\begin{aligned}
\Gamma_t^\epl &\le \norm{\Delta_0^\epl}^2 e^{-2K \int_0^t \xi_r \dd r} \\
&\quad -2 \int_0^t \xi_s e^{-2K \int_s^t \xi_r \dd r} \left( \widetilde \theta_{\mathrm{exp},s}^{\delta,\epl} - \theta \right) \left( f(X_s^\epl) \otimes f(Z_s^\epl) - \E^{\mu^\epl} \left[ f(X^\epl) \otimes f(Z^\epl) \right] \right) : \Delta_s^\epl \dd s \\
&\quad - \frac2\epl \int_0^t \xi_s e^{-2K \int_s^t \xi_r \dd r} \left( Y_s^\epl \otimes f(Z_s^\epl) - \E^{\mu^\epl} \left[ Y^\epl \otimes f(Z^\epl)  \right] \right) : \Delta_s^\epl \dd s \\
&\eqdef I_t^1 + I_t^2 + I_t^3.
\end{aligned}
\end{equation}
The last steps needed to bound these quantities and to obtain the desired result are similar to what is done at the end of the proof of \cref{pro:limit_SGDCT_t_YESfilter}, and we omit the details. We only remark that \cref{lem:Poisson_expansion} holds true also for the generators of the processes \eqref{eq:system_Levy} and \eqref{eq:system_limit_Levy}, since the hypoellipticity of these generators is guaranteed by \cref{lem:Lebesgue_Levy}, and that the moments of the estimator $\widetilde L_{\mathrm{exp},t}^{\delta,\epl}$ are bounded uniformly in time due to \cref{lem:bounded_moments_Levy}.
\end{proof}

We are now ready to show the asymptotic unbiasedness of the estimator $\widetilde L^{\delta,\epl}_{\mathrm{exp},t}$.

\begin{theorem} \label{thm:SGDCT_exp_ok_eps_Levy}
Let $\widetilde L_{\mathrm{exp},t}^{\delta,\epl}$ be defined in \eqref{eq:SGDCT_exp_Levy}. Then it holds
\begin{equation}
\lim_{\epl\to0} \lim_{t\to\infty} \widetilde L_{\mathrm{exp},t}^{\delta,\epl} = L, \qquad \text{in } L^2.
\end{equation}
\end{theorem}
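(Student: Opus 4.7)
The plan is to mimic the proof of \cref{thm:SGDCT_exp_ok_eps} in the additive-noise case, substituting the Lévy-area versions of the two key ingredients. That is, I would combine the infinite-time limit from \cref{pro:limit_SGDCT_t_YESfilter_levy} with the Fokker--Planck identities in \cref{lem:magic_formulas_Levy}, and then pass to the limit $\epl \to 0$ using the weak convergence of $(X_t^\epl, Z_t^\epl)$ to $(X_t, Z_t)$ that comes from hypoellipticity (\cref{lem:Lebesgue_Levy}) and the standard homogenization argument as in \cite[Section 5.1]{Pav14}.

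First, I would apply \cref{pro:limit_SGDCT_t_YESfilter_levy} to rewrite
\begin{equation*}
\lim_{t \to \infty} \widetilde L_{\mathrm{exp},t}^{\delta,\epl} = \theta - \frac1\epl \E^{\mu^\epl}\!\left[\sqrt{\kappa + \beta \norm{X^\epl}^2}\, Y^\epl \otimes Z^\epl\right] \E^{\mu^\epl}\!\left[X^\epl \otimes Z^\epl\right]^{-1}
\end{equation*}
in $L^2$, where the inverse makes sense for $\epl$ small by \cref{lem:positive_definite_Levy} and \cref{rem:positive_definite}. Then I would substitute identity $(i)$ of \cref{lem:magic_formulas_Levy} into this expression; the term $\theta \E^{\mu^\epl}[X^\epl \otimes Z^\epl]$ cancels the leading $\theta$, leaving
\begin{equation*}
\lim_{t \to \infty} \widetilde L_{\mathrm{exp},t}^{\delta,\epl} = - \frac1\delta \E^{\mu^\epl}\!\left[X^\epl \otimes (Z^\epl - X^\epl)\right] \E^{\mu^\epl}\!\left[X^\epl \otimes Z^\epl\right]^{-1}, \qquad \text{in } L^2.
\end{equation*}

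Next, passing $\epl \to 0$ and invoking the weak convergence of $\mu^\epl$ to $\mu$ (together with uniform integrability, which follows because the processes are wrapped on the compact torus $\mathbb{T}^d$ as in \cref{rem:compact_state_space,rem:compact_state_space_2}), the right-hand side converges to
\begin{equation*}
- \frac1\delta \E^{\mu}\!\left[X \otimes (Z - X)\right] \E^{\mu}\!\left[X \otimes Z\right]^{-1}.
\end{equation*}
Applying identity $(ii)$ of \cref{lem:magic_formulas_Levy}, which identifies $\tfrac{1}{\delta}\E^\mu[X \otimes (Z-X)]$ with $-L\,\E^\mu[X \otimes Z]$, the expression collapses to $L$ and yields the claim.

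The proof is therefore essentially algebraic once the two technical pillars are in place; I do not anticipate a main obstacle here, since \cref{pro:limit_SGDCT_t_YESfilter_levy} already absorbs the hard work of controlling the SGDCT dynamics (via Poisson-equation asymptotics, moment bounds from \cref{lem:bounded_moments_Levy}, and the coercivity of \cref{lem:positive_definite_Levy}), while \cref{lem:magic_formulas_Levy} handles the algebraic relations among the stationary moments. The only subtlety worth flagging is the interchange of the $L^2$-limit in $t$ with the weak limit in $\epl$: this is justified by the uniform-in-$\epl$ moment bounds already established for the estimator and by the boundedness of $X^\epl, Z^\epl$ on $\mathbb{T}^d$, so the proof can be stated very briefly, referencing the proof of \cref{thm:SGDCT_exp_ok_eps} for the analogous details.
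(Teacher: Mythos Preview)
Your proposal is correct and follows essentially the same approach as the paper: apply \cref{pro:limit_SGDCT_t_YESfilter_levy} for the $t\to\infty$ limit, then use \cref{lem:magic_formulas_Levy}$(i)$, weak convergence of $(X_t^\epl,Z_t^\epl)$ to $(X_t,Z_t)$, and \cref{lem:magic_formulas_Levy}$(ii)$, exactly as in the proof of \cref{thm:exp_ok_eps_Levy}. The paper's proof is in fact stated even more tersely than yours, simply pointing to \cref{pro:limit_SGDCT_t_YESfilter_levy} and the argument of \cref{thm:exp_ok_eps_Levy}.
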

\begin{proof}
The desired result is obtained applying \cref{pro:limit_SGDCT_t_YESfilter_levy} and, following the proof of \cref{thm:exp_ok_eps_Levy}, due to \cref{lem:magic_formulas_Levy}.
\end{proof}

\begin{remark}
Notice that in \cref{thm:SGDCT_exp_ok_eps_Levy}, as well as in the previous results in \cref{thm:exp_ok_eps,thm:SGDCT_exp_ok_eps,thm:exp_ok_eps_Levy}, the order of the limits is important and they cannot commute. In fact, the convergence of the stochastic processes with respect to the parameter $\epl$ is in law. Hence, as it is shown in the proofs, we first need to reach the expectations with respect to the invariant measures through the ergodic theorem, and therefore the infinite limit in time, and then let the correlation time vanish.
\end{remark}

\section{Numerical experiments} \label{sec:numerical_experiments}

In this section, we present a series of numerical experiments which confirm our theoretical results. Synthetic data are generated employing the Euler--Maruyama method with a fine time step $h = \epl^3$, where we set the scale parameter to $\epl = 0.05$ and $\epl= 0.1$, respectively. All the experiments are performed for $M = 100$ times.  The red/yellow lines in the displayed plots represent the average estimated values and the blue/green shades correspond to the standard deviations, while the dashed black lines are the exact values. The results are shown as functions of time. We finally remark that filtered data are generated setting the filtering width $\delta = 1$, and the learning rate for the SGDCT estimator is chosen to be $\xi_t = a/(b+t)$ where $a$ and $b$ will be specified in the next sections. Moreover, all the initial conditions for both the stochastic processes and the SGDCT estimators are set to be zeros, and the final time of integration is chosen such that the mean values of the estimators reach convergence, i.e., they stabilize and do not oscillate.

\subsection{Additive noise} \label{sec:num_additive}

\begin{figure}
\centering
\includegraphics[]{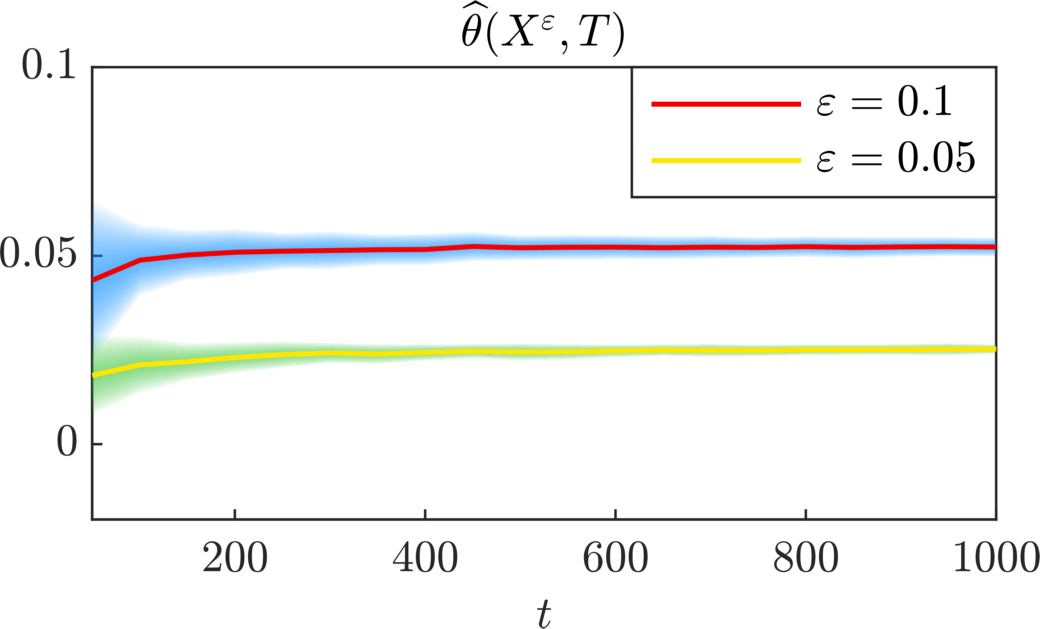}
\includegraphics[]{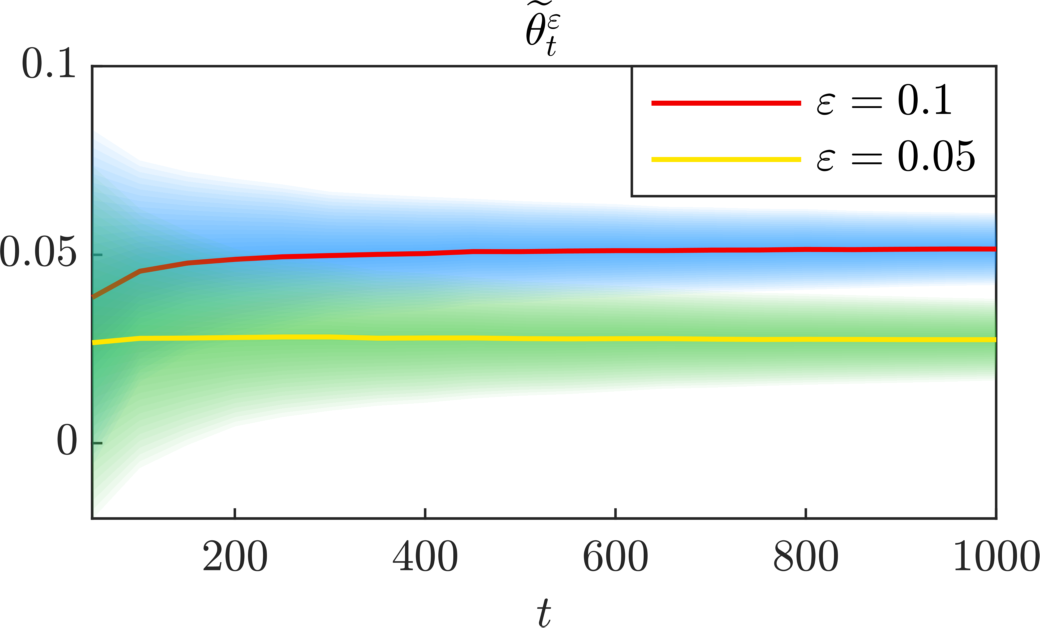}
\caption{MLE estimator $\widehat \theta(X^\epl,T)$ (left) and SGDCT estimator $\widetilde \theta^\epl_t$ (right) in the one dimensional case with additive colored noise.}
\label{fig:ko}
\end{figure}

Consider the system of SDEs \eqref{eq:system_multiD_additive} with additive colored noise and its limiting equation \eqref{eq:limit_multiD_additive} with white noise. We first verify that the estimators $\widehat \theta(X^\epl,T)$ and $\widetilde \theta_t^\epl$, which are only based on the original data $X_t^\epl$ driven by colored noise, are not able to correctly estimate the unknown drift coefficient $\theta$, as predicted by \cref{pro:MLE_ko,pro:SGDCT_ko}. We focus on the one-dimensional case, i.e., we let $d=\ell=n=m=1$, we set the parameters $\theta = G = A = \sigma = 1$, and we fix the final time $T = 1000$ and $\epl = 0.1$. Moreover, the parameters in the learning rate $\xi_t$ for the SGDCT estimator are chosen to be $a=1$ and $b=0.1$. In \cref{fig:ko}, we display the approximations provided by the estimators $\widehat \theta(X^\epl,T)$ and $\widetilde \theta_t^\epl$ as a function of time. We notice that the estimated values are close to zero independently of time. In particular, we observe that the smaller value of the scale parameter $\epl$ yields estimates closer to zero. This shows that the drift coefficient $\theta$ cannot be inferred from the data which originates from the SDE driven by colored noise, and, consequently, confirms that the original data must be preprocessed.

\begin{figure}
\centering
\includegraphics[]{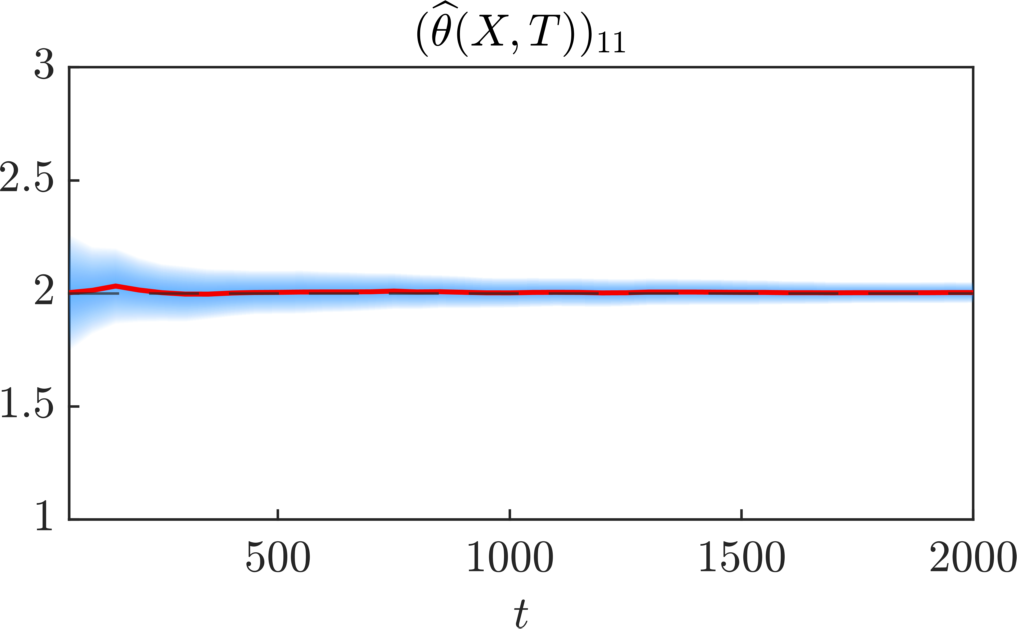}
\includegraphics[]{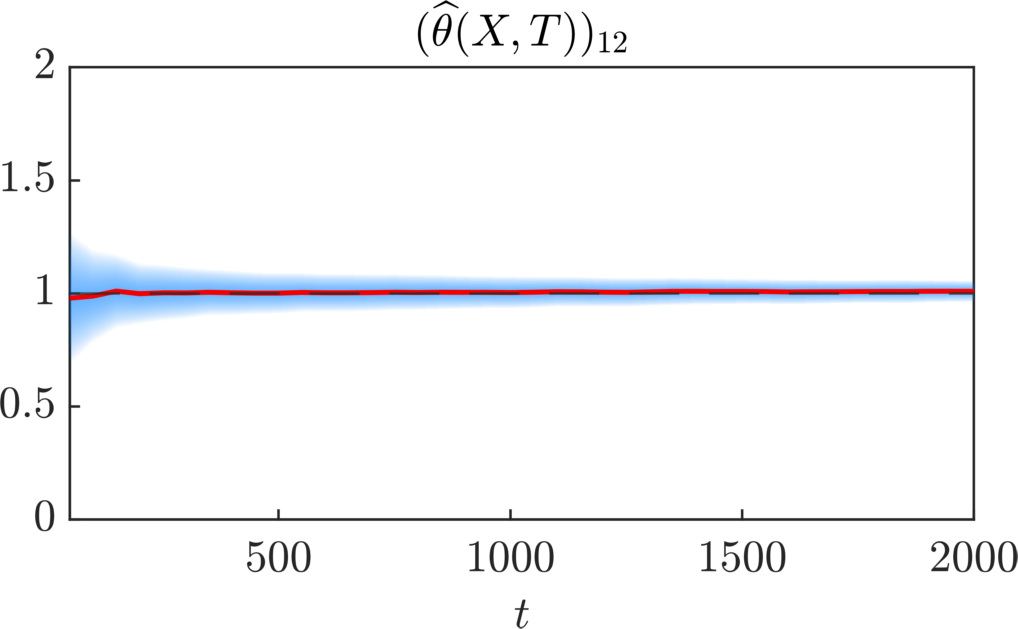}
\includegraphics[]{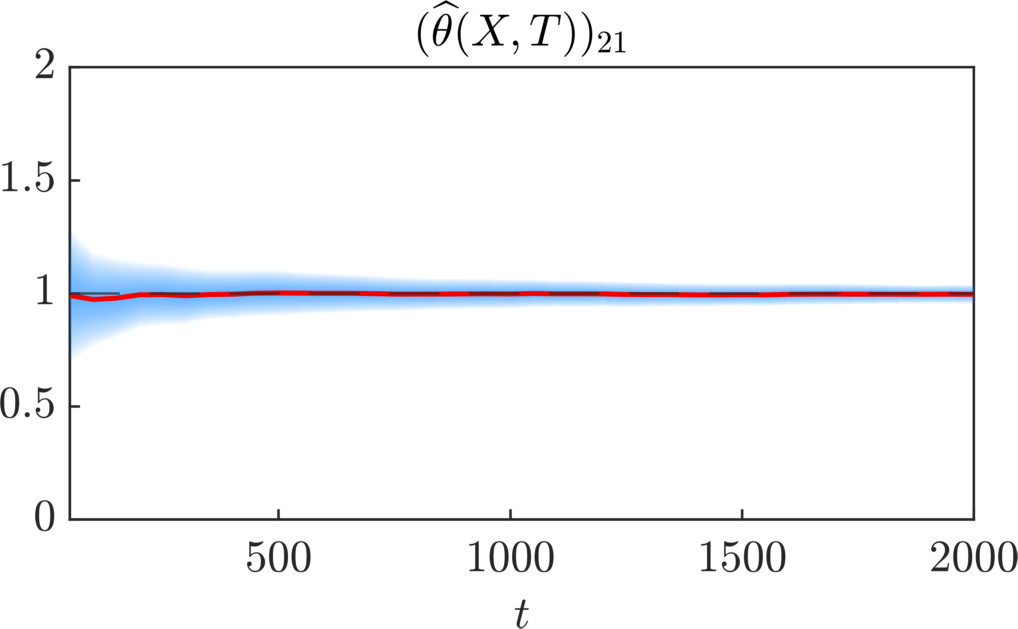}
\includegraphics[]{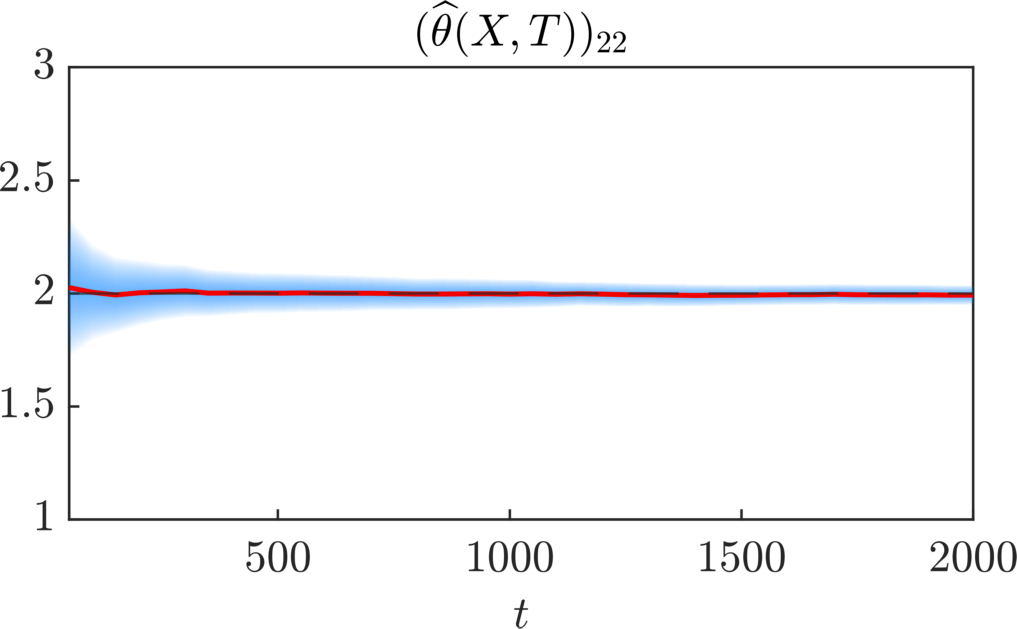}
\caption{Four components of the MLE estimator $\widehat \theta(X,T)$ with data from the limit equation in presence of additive white noise.}
\label{fig:MLE_ok}
\end{figure}

\begin{figure}
\centering
\includegraphics[]{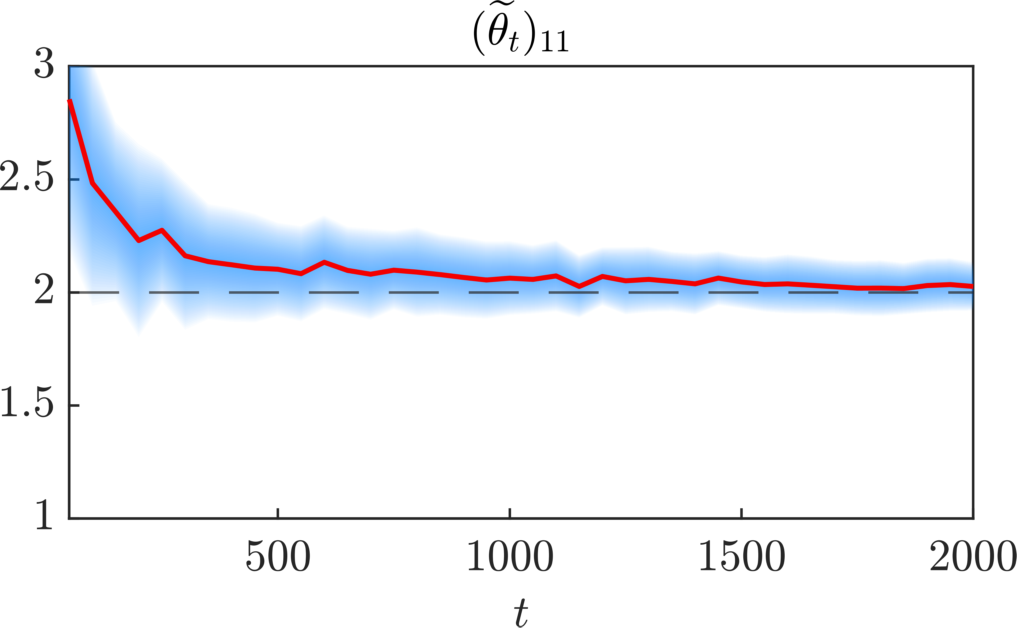}
\includegraphics[]{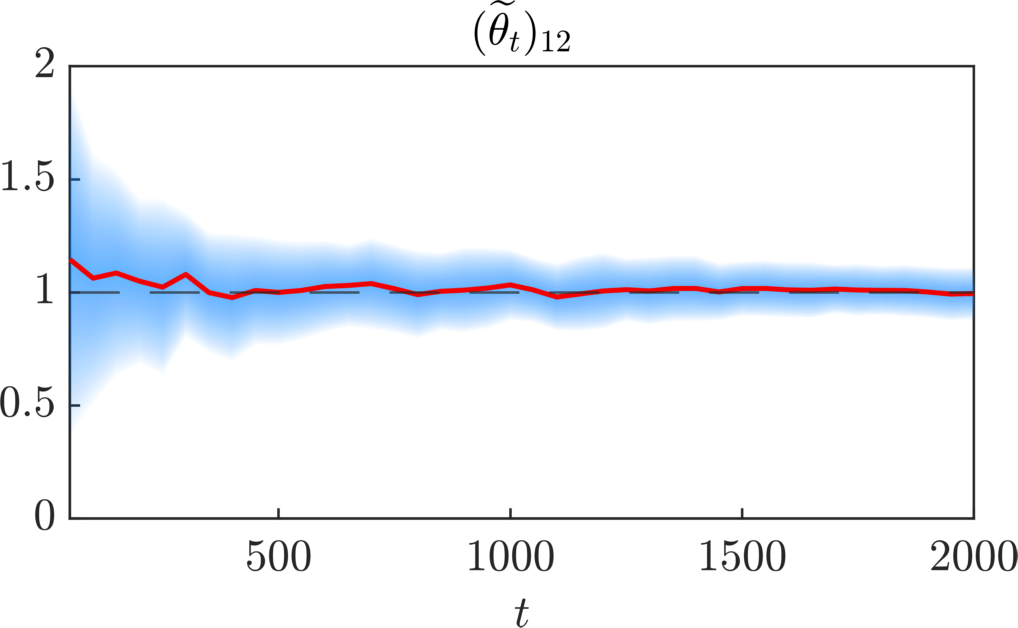}
\includegraphics[]{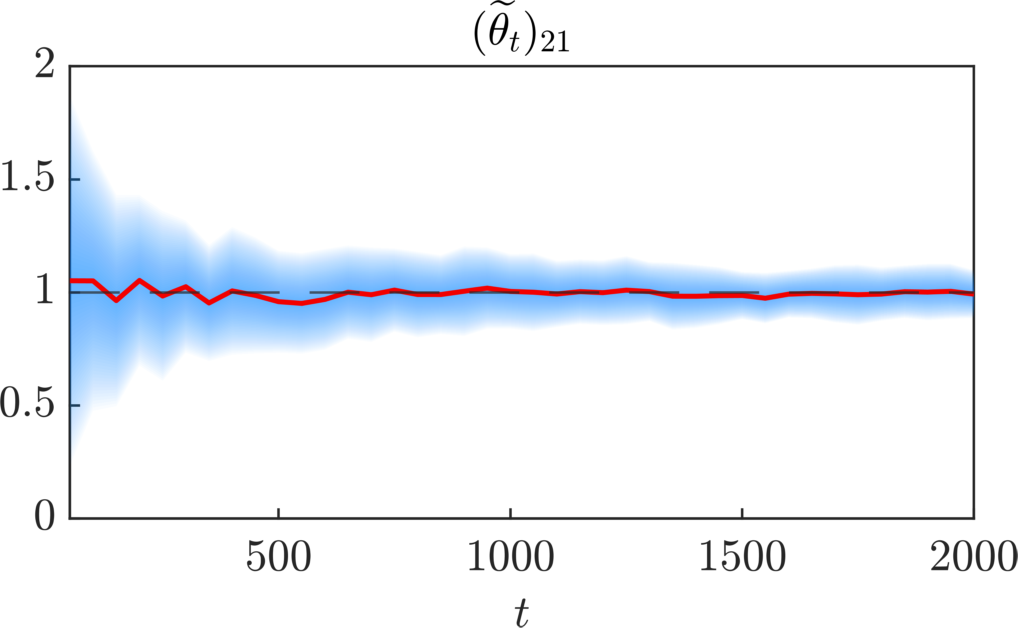} 
\includegraphics[]{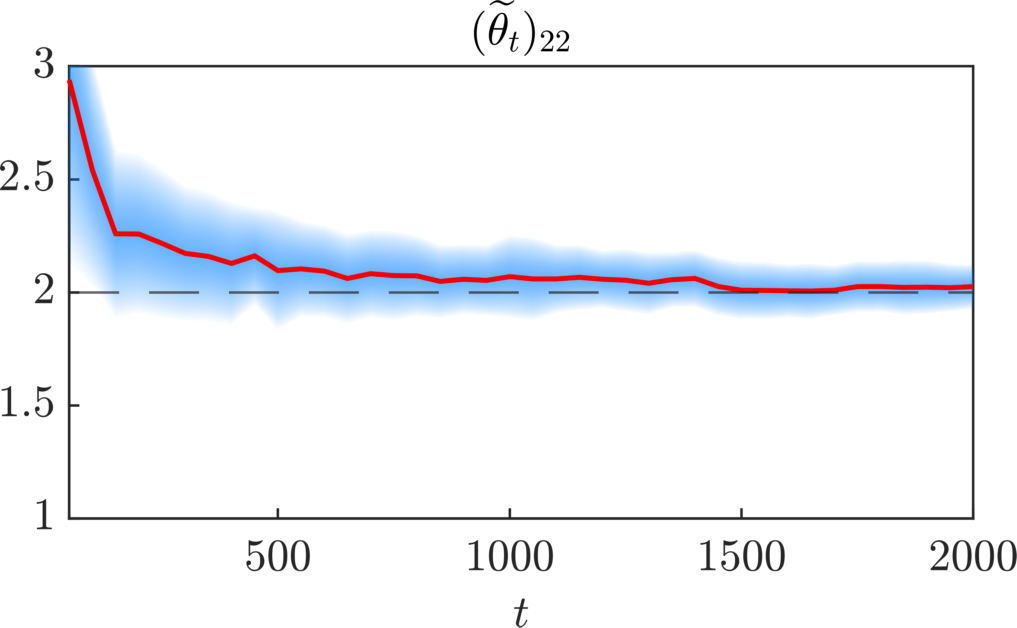}
\caption{Four components of the SGDCT estimator $\widetilde \theta_t$ with data from the limit equation in presence of additive white noise.}
\label{fig:SGDCT_ok}
\end{figure}

\begin{figure}
\centering
\includegraphics[]{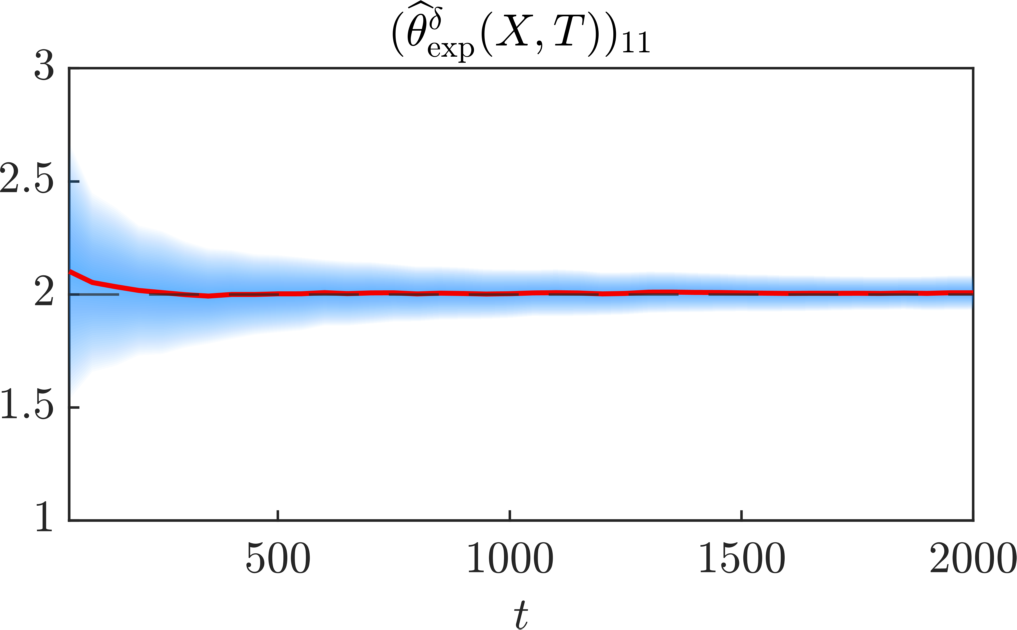}
\includegraphics[]{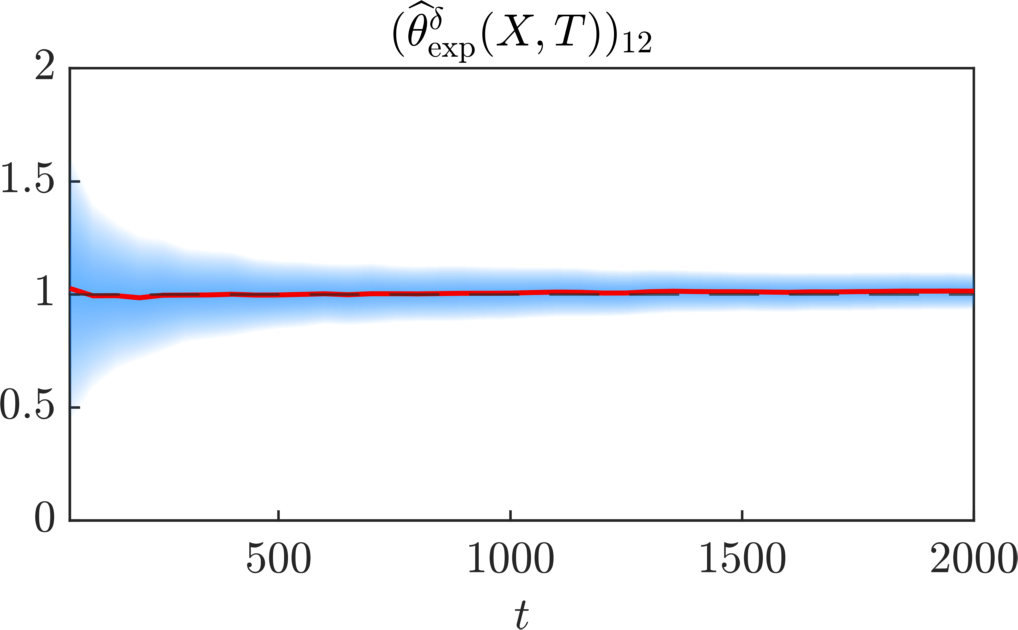}
\includegraphics[]{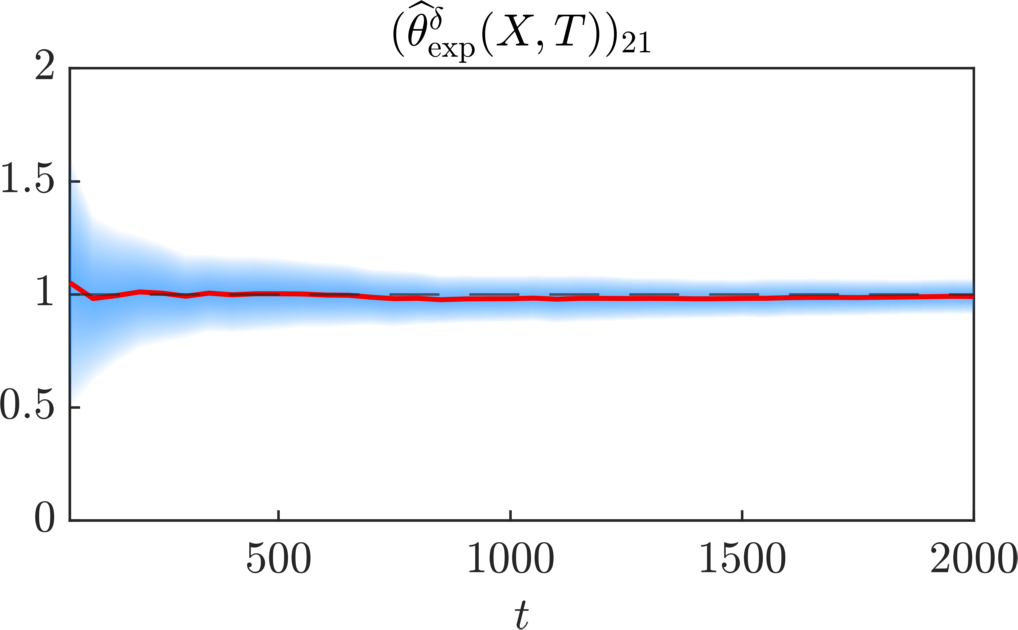} 
\includegraphics[]{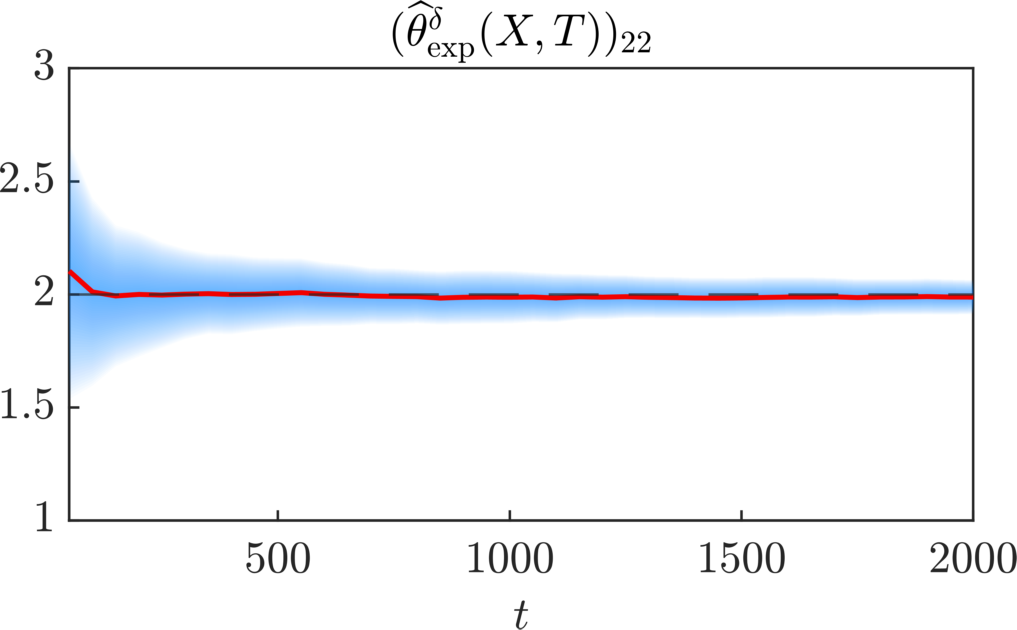}
\caption{Four components of the MLE estimator $\widehat \theta_{\mathrm{exp}}^\delta(X,T)$ with filtered data from the limit equation in presence of additive white noise.}
\label{fig:exp_ok}
\end{figure}

\begin{figure}
\centering
\includegraphics[]{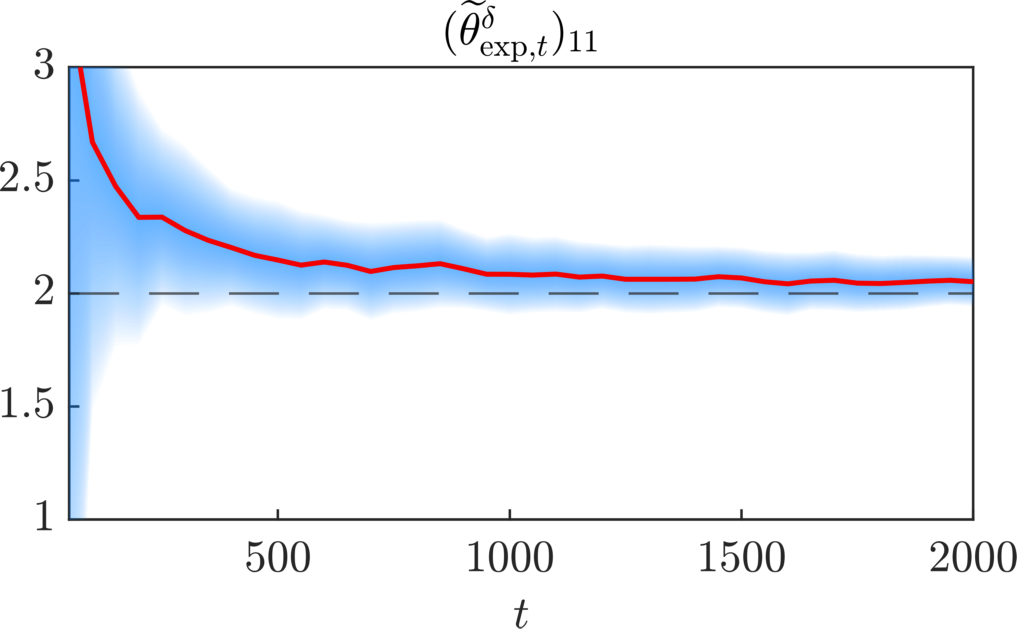}
\includegraphics[]{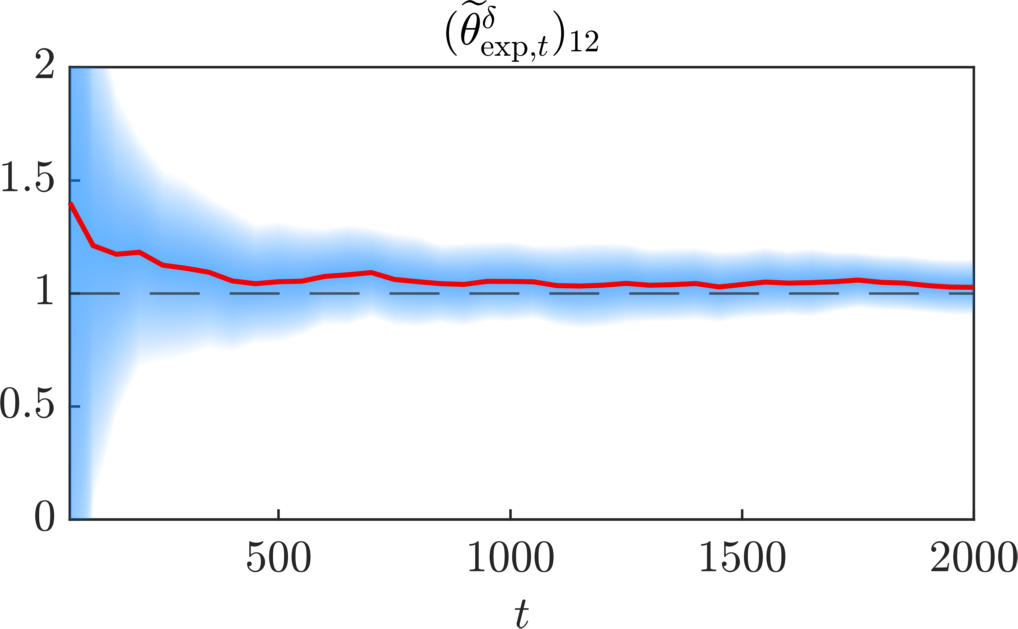}
\includegraphics[]{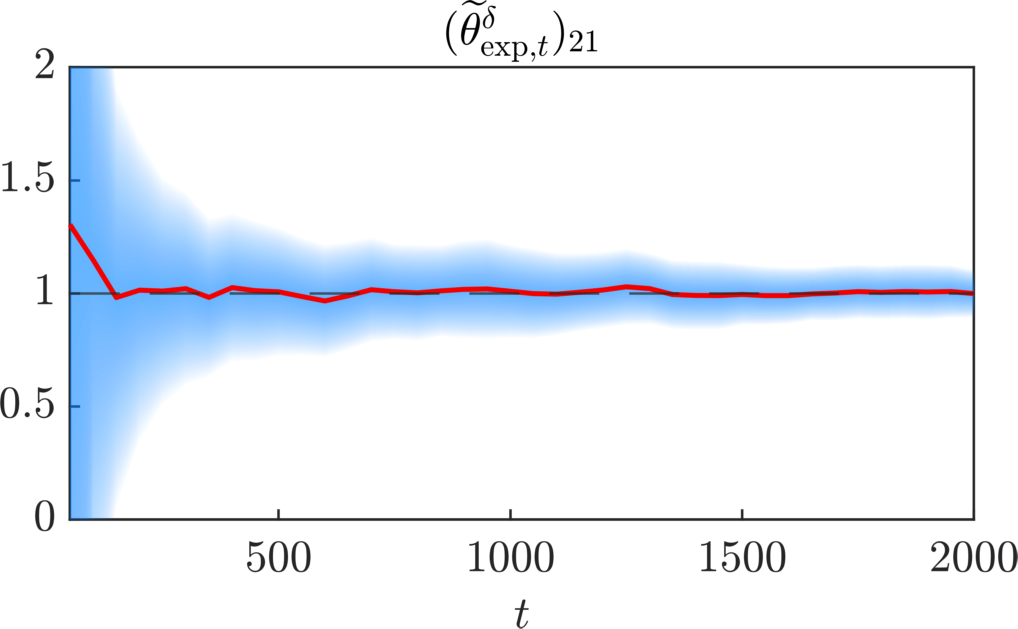} 
\includegraphics[]{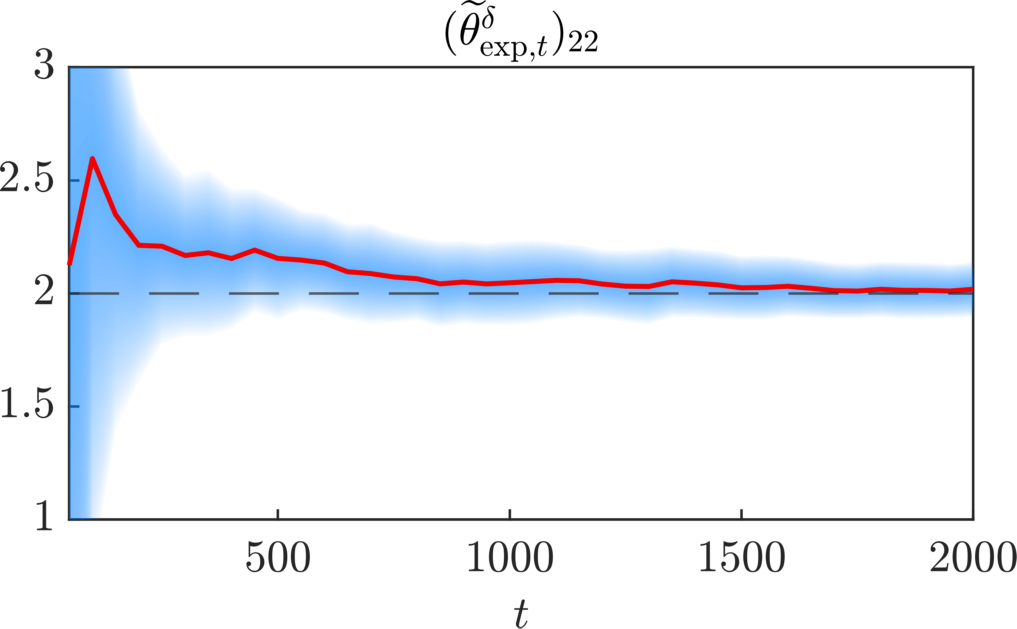}
\caption{Four components of the SGDCT estimator $\widetilde \theta_{\mathrm{exp},t}^\delta$ with filtered data from the limit equation in presence of additive white noise.}
\label{fig:SGDCT_exp_ok}
\end{figure}

\begin{figure}
\centering
\includegraphics[]{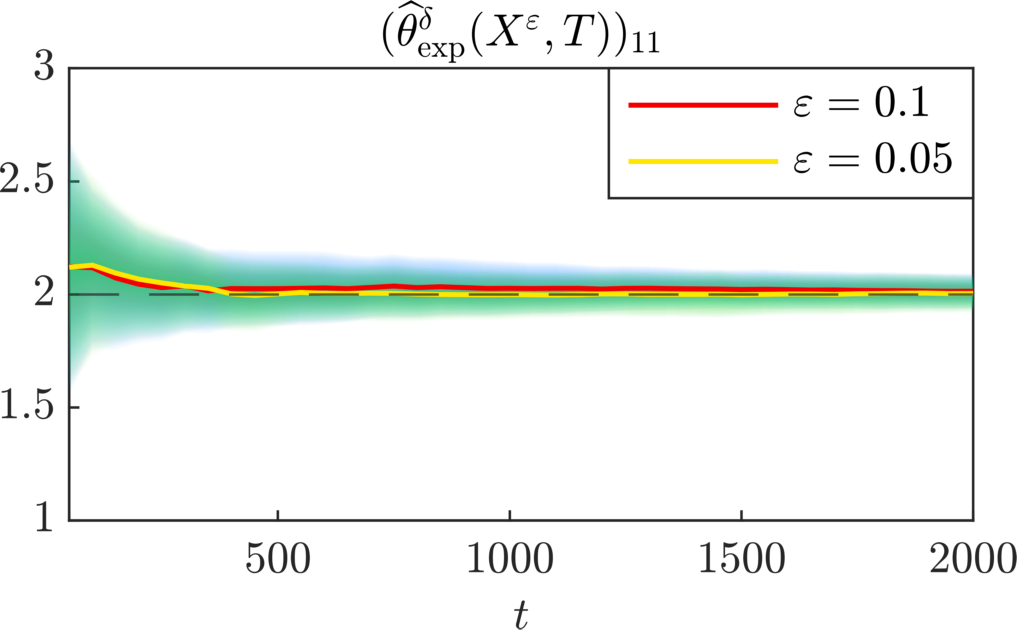}
\includegraphics[]{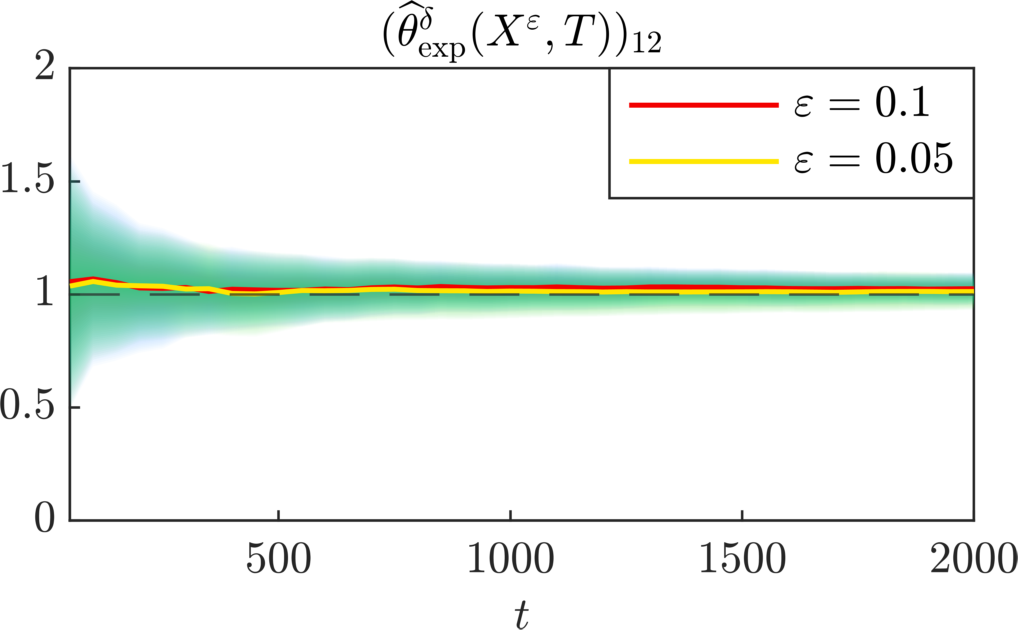}
\includegraphics[]{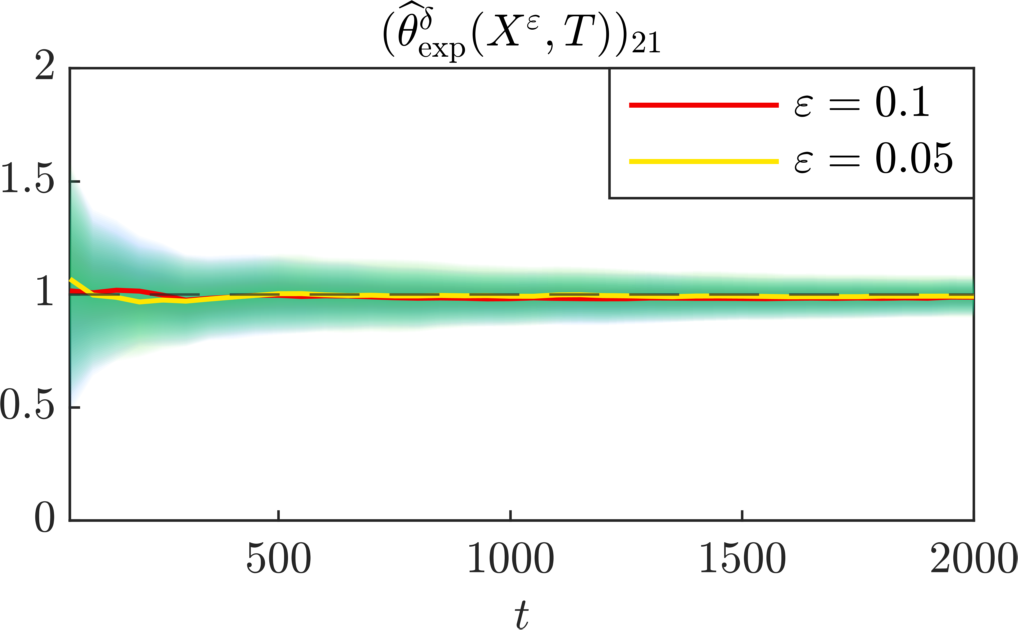} 
\includegraphics[]{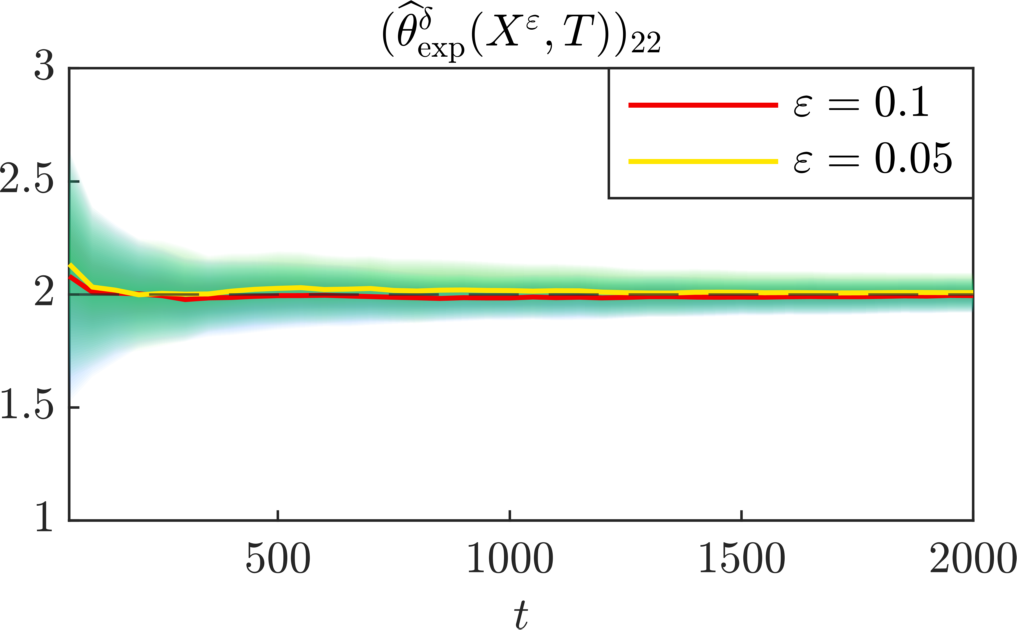}
\caption{Four components of the MLE estimator $\widehat \theta_{\mathrm{exp}}^\delta(X^\epl,T)$ with filtered data from the original equation in presence of additive colored noise.}
\label{fig:exp_ok_eps}
\end{figure}

\begin{figure}
\centering
\includegraphics[]{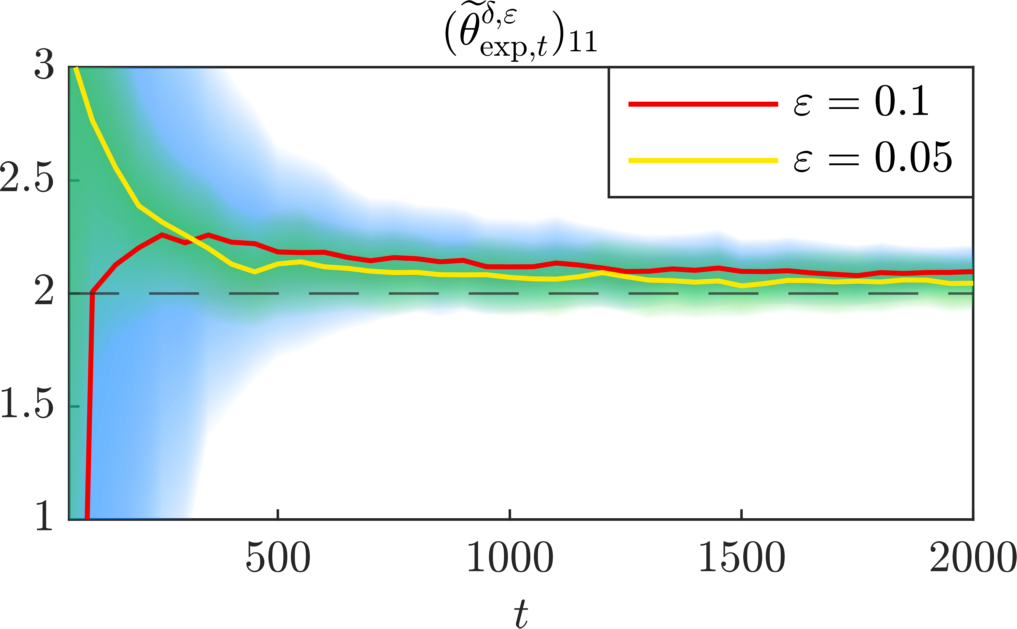}
\includegraphics[]{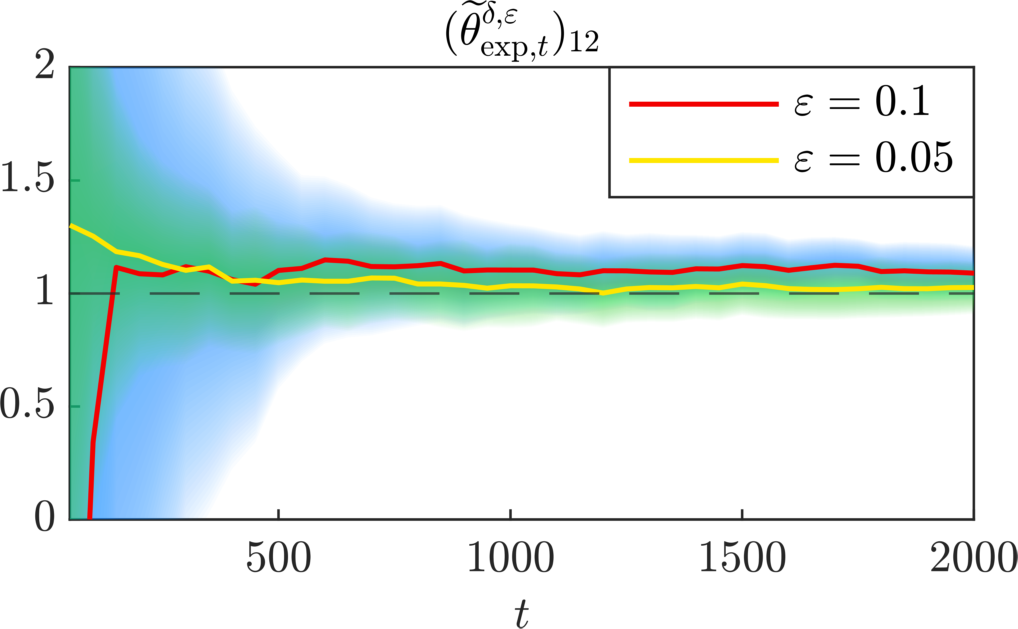}
\includegraphics[]{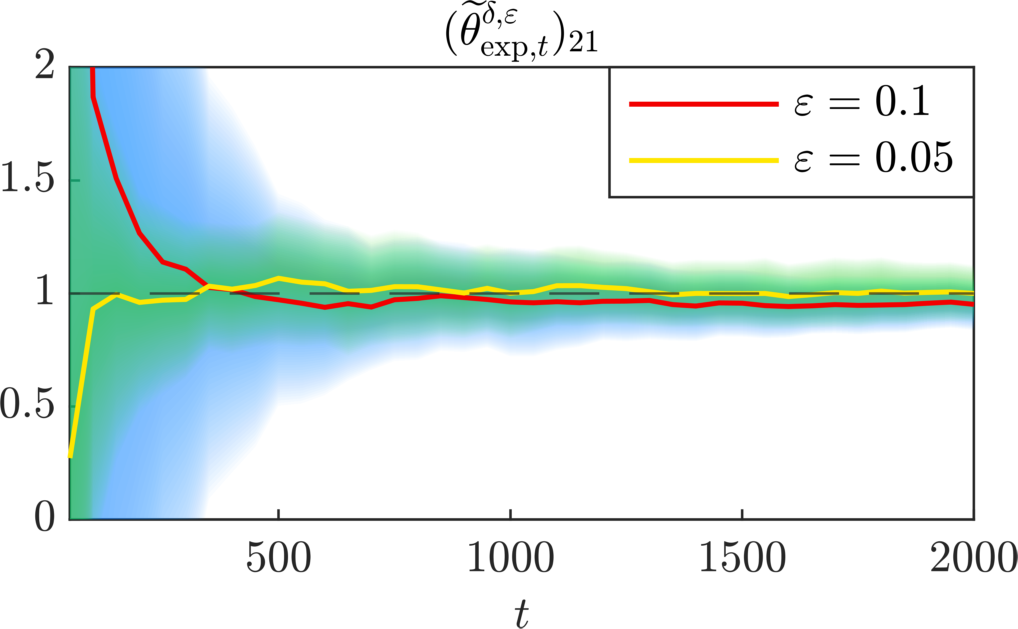} 
\includegraphics[]{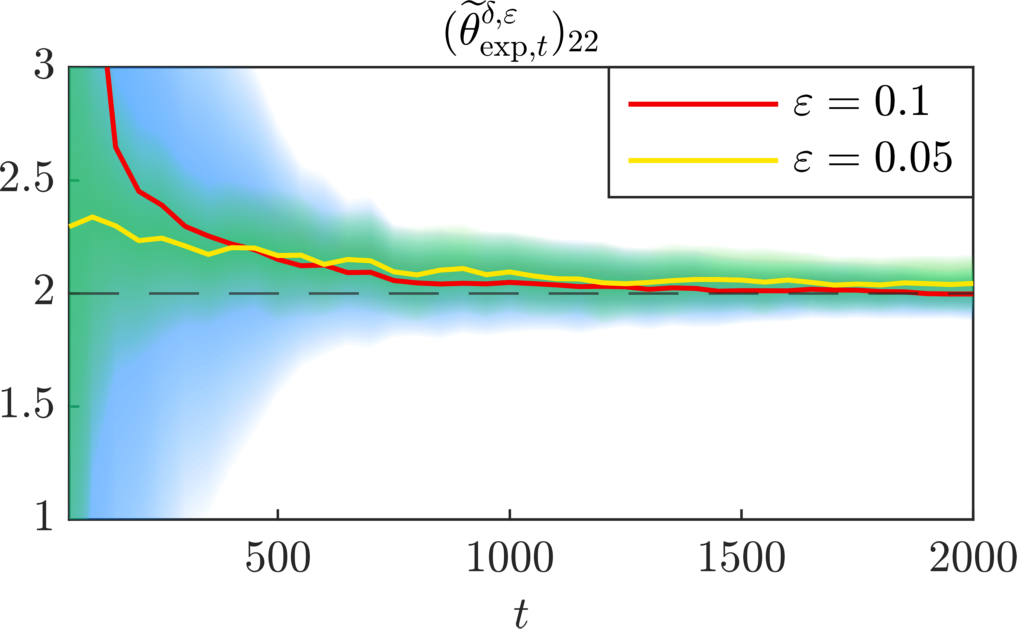}
\caption{Four components of the SGDCT estimator $\widetilde \theta_{\mathrm{exp},t}^{\delta,\epl}$ with filtered data from the original equation in presence of additive colored noise.}
\label{fig:SGDCT_exp_ok_eps}
\end{figure}

Let us now focus on the MLE and SGDCT estimators $\widehat \theta(X,T)$, $\widehat \theta_{\mathrm{exp}}^\delta(X,T)$, $\widehat \theta_{\mathrm{exp}}^\delta(X^\epl,T)$ and $\widetilde \theta_t$, $\widetilde \theta_{\mathrm{exp},t}^\delta$, $\widetilde \theta_{\mathrm{exp},t}^{\delta,\epl}$, for which we rigorously showed their asymptotic unbiasedness in \cref{thm:MLE_ok,thm:exp_ok,thm:exp_ok_eps,thm:SGDCT_ok,thm:SGDCT_exp_ok,thm:SGDCT_exp_ok_eps}. We consider the two-dimensional setting in \cref{ex:multiD}, and we set $\alpha = \gamma = \eta = 1$, and the drift function to be $h(x) = -x$. Moreover, we choose the diffusion function $g(x) = G = I$, where $I$ denotes the identity matrix, so that the noise is additive. The remaining parameters are the coefficients $a=100$ and $b=0.1$ in the learning rate $\xi_t$ of the SGDCT estimators, the final time $T = 2000$, and the unknown matrix $\theta \in \R^{2\times2}$ which is given by
\begin{equation}
\theta =
\begin{pmatrix}
2 & 1 \\
1 & 2
\end{pmatrix}.
\end{equation}
We verify in \cref{fig:MLE_ok,fig:SGDCT_ok,fig:exp_ok,fig:SGDCT_exp_ok,fig:exp_ok_eps,fig:SGDCT_exp_ok_eps} that all the estimators are able to correctly infer the true drift coefficient $\theta$. In particular, we observe that the estimated values tend to stabilize around the correct parameters when time increases, and that the bias and the standard deviation decrease. We remark that, even if the variance of the SGDCT estimators is larger than the variance of the MLE estimators when only few data are collected, the variance of the two approaches is comparable if the final time is sufficiently large. Finally, we also notice that decreasing the values of $\epl$, and therefore considering colored noise which is closer to white noise, provides better approximations both in terms of bias and especially variance of the estimators at finite time.

\subsection{Lévy area correction} \label{sec:num_Levy}

\begin{figure}
\centering
\includegraphics[]{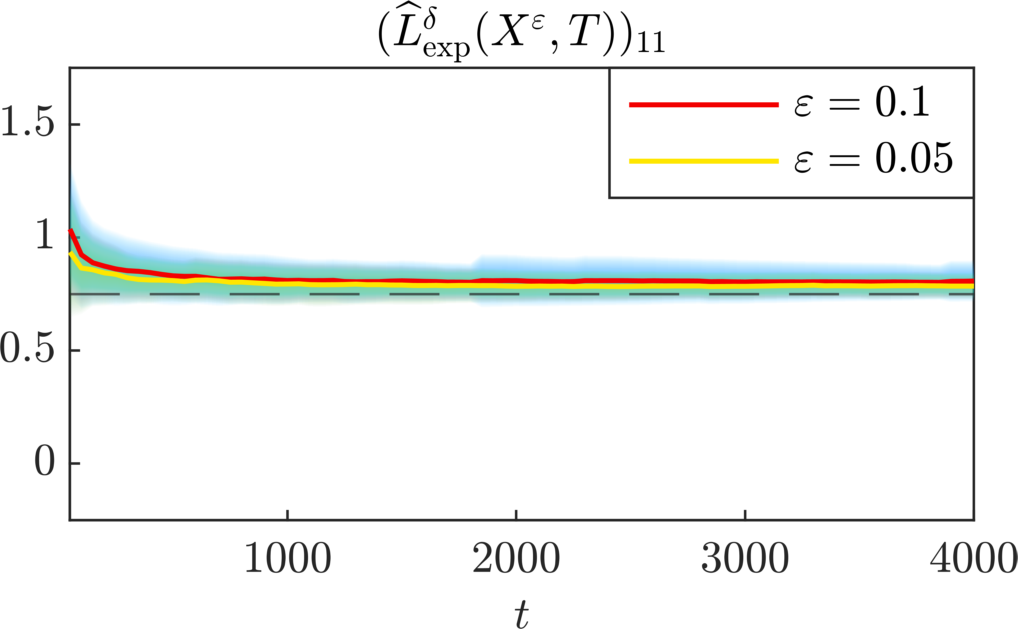}
\includegraphics[]{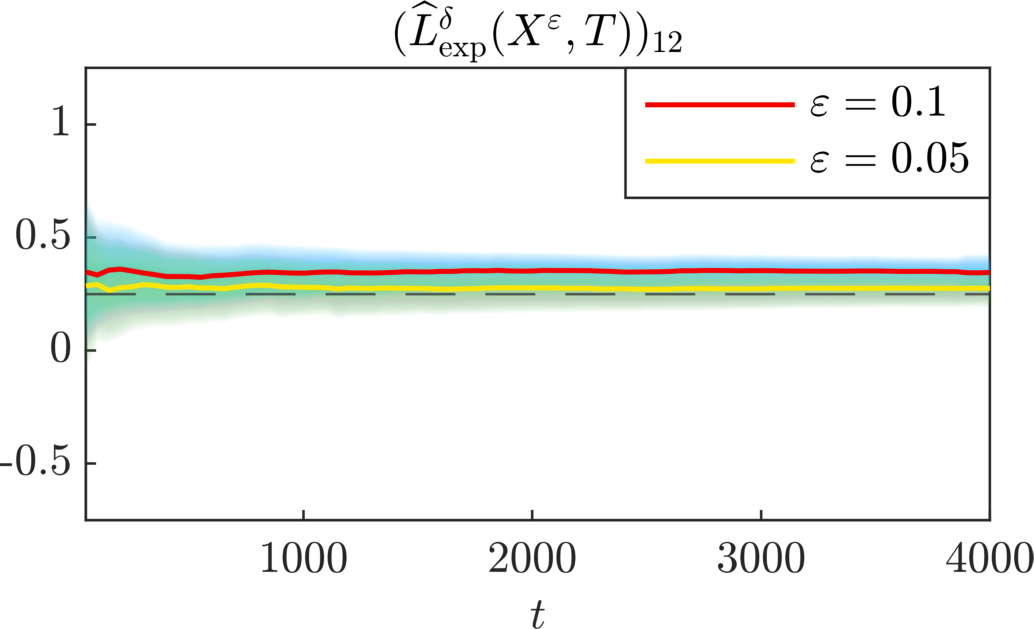}
\includegraphics[]{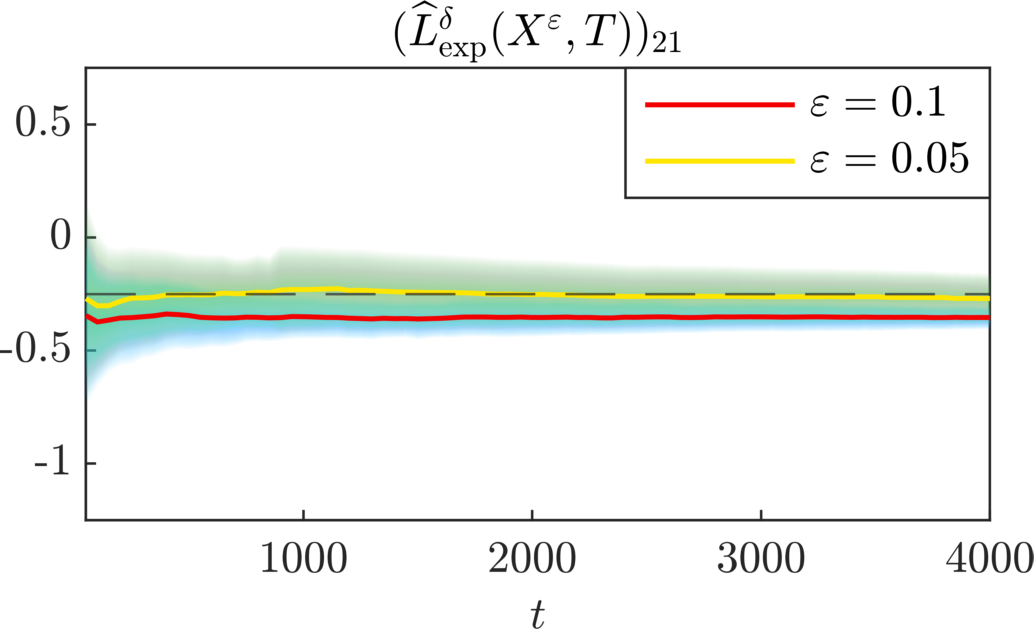} 
\includegraphics[]{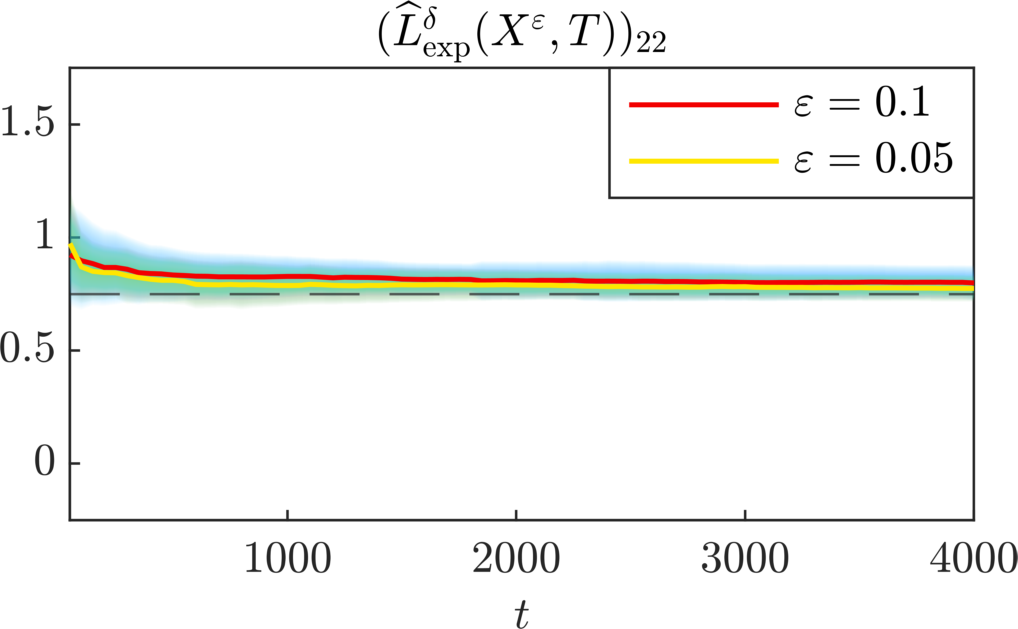}
\caption{Four components of the MLE estimator $\widehat L_{\mathrm{exp}}^\delta(X^\epl,T)$ with filtered data from the original equation driven by colored noise in presence of the Lévy area correction.}
\label{fig:Levy}
\end{figure}

\begin{figure}
\centering
\includegraphics[]{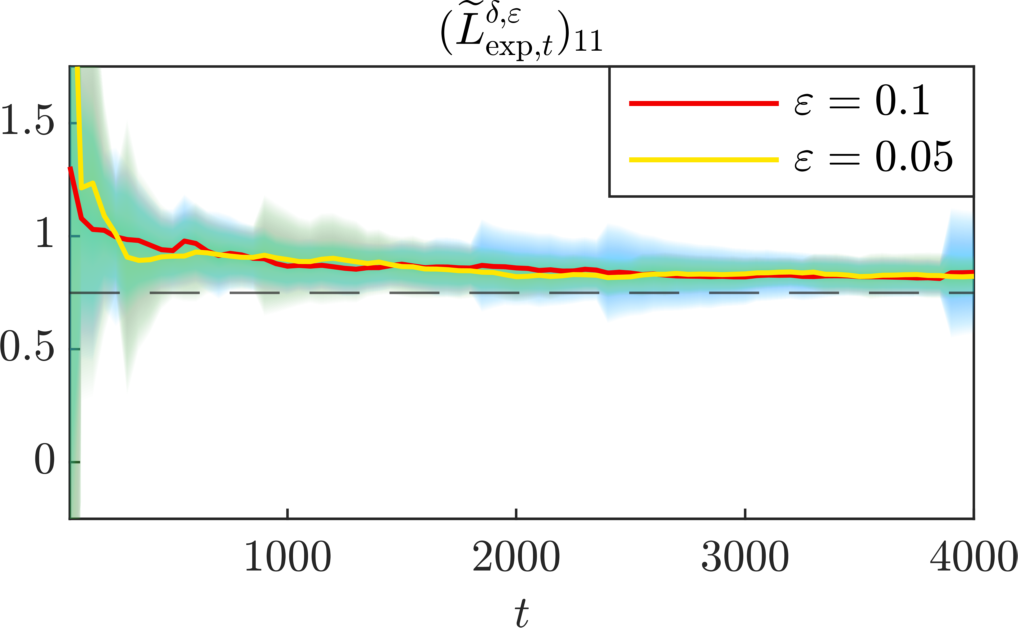}
\includegraphics[]{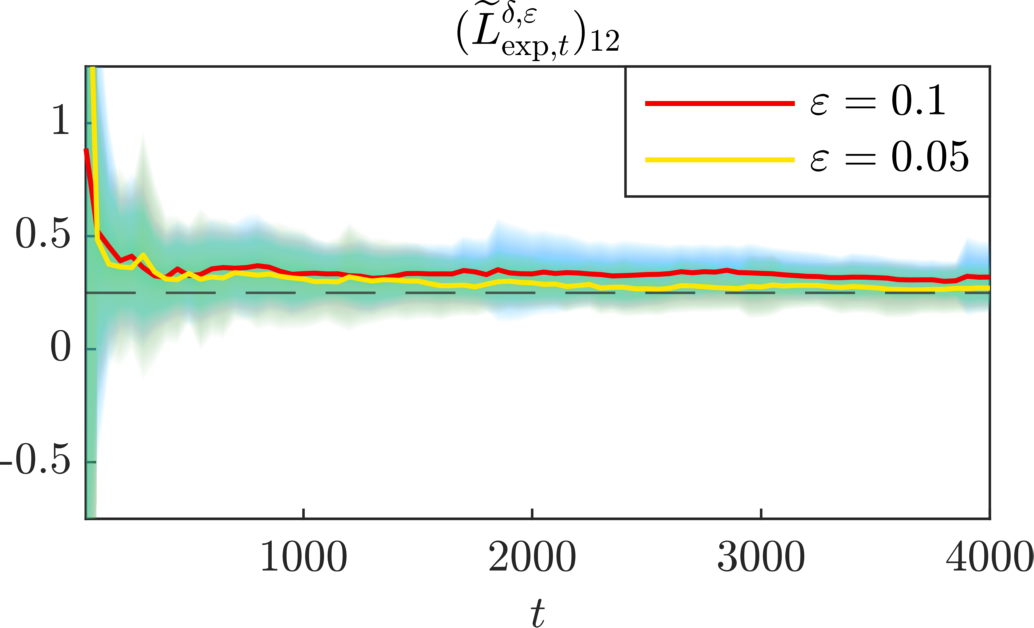}
\includegraphics[]{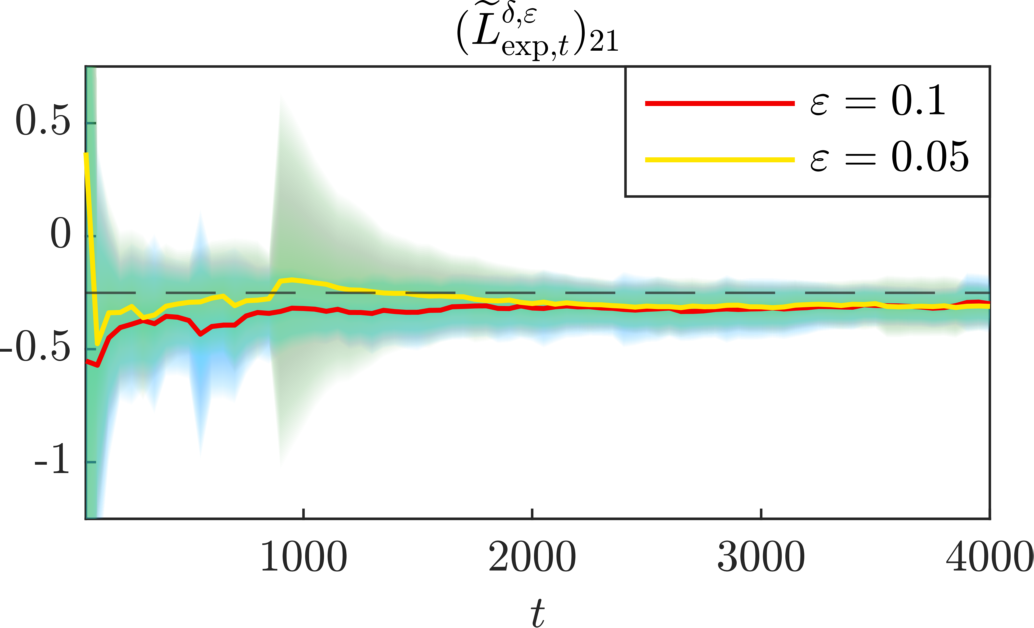} 
\includegraphics[]{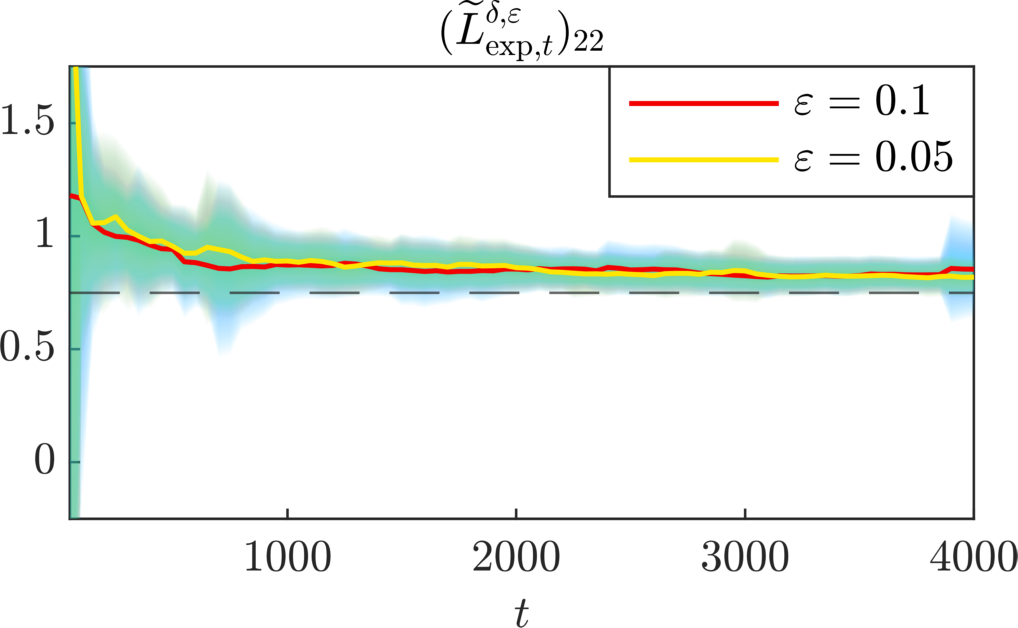}
\caption{Four components of the SGDCT estimator $\widetilde L_{\mathrm{exp},t}^{\delta,\epl}$ with filtered data from the original equation driven by colored noise in presence of the Lévy area correction.}
\label{fig:SGDCT_Levy}
\end{figure}

We consider here the framework of \cref{sec:Levy} and study the case of multiplicative noise that yields a Lévy area correction in the limit equation. We set the final time to $T = 4000$, the coefficients in the equations to $\theta = \alpha = \gamma = \kappa = \beta = 1$, and the parameters $a=10$ and $b=0.1$ in the learning rate $\xi_t$ for the SGDCT estimator. The approximations provided by the estimators $\widehat L^\delta_{\mathrm{exp}}(X^\epl,T)$ and $\widetilde L^{\delta,\epl}_{\mathrm{exp},t}$ are shown in \cref{fig:Levy,fig:SGDCT_Levy}. The results are in line with our findings from the previous test cases. We only remark that, given the more complex stochastic models and consequently the more challenging inference problem, the final estimations are slightly worse. Nevertheless, if the final time and the parameter $\epl$ are sufficiently large and small, respectively, both the MLE and the SGDCT estimator are able to provide a reliable approximation of the unknown drift coefficient $L$. Finally, we notice that in this example the MLE seems to be more robust the SGDCT estimator, which also depends on the learning rate. On the other hand, we believe that the SGDCT estimator with filtered data can be successfully employed in the more general setting when the drift function does not depend linearly on the parameter, and therefore the MLE does not have a closed-form expression. 

\subsection{Asymptotic normality} \label{sec:CLT}

\begin{figure}
\centering
\includegraphics{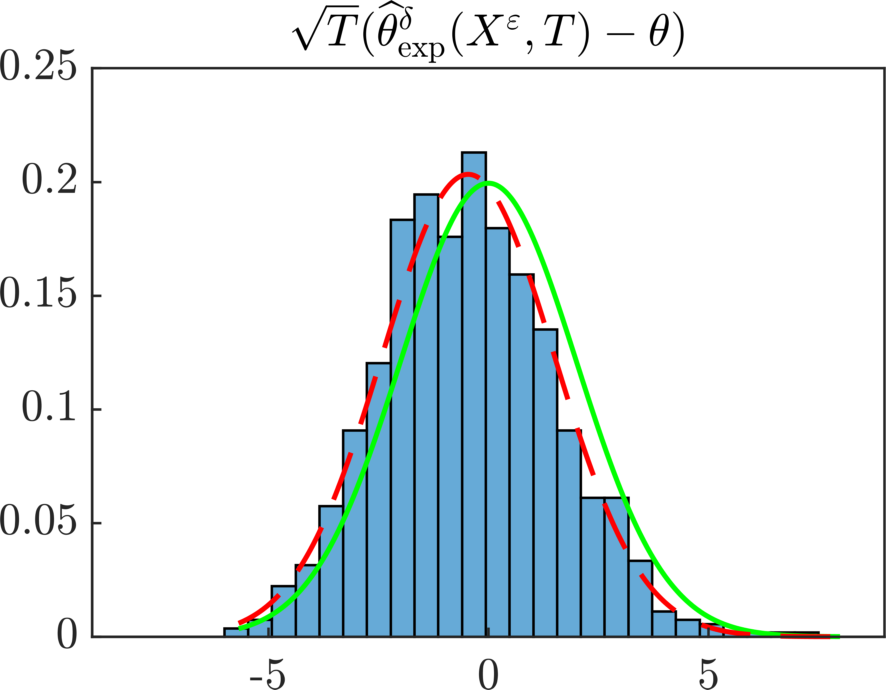} \hspace{1cm}
\includegraphics{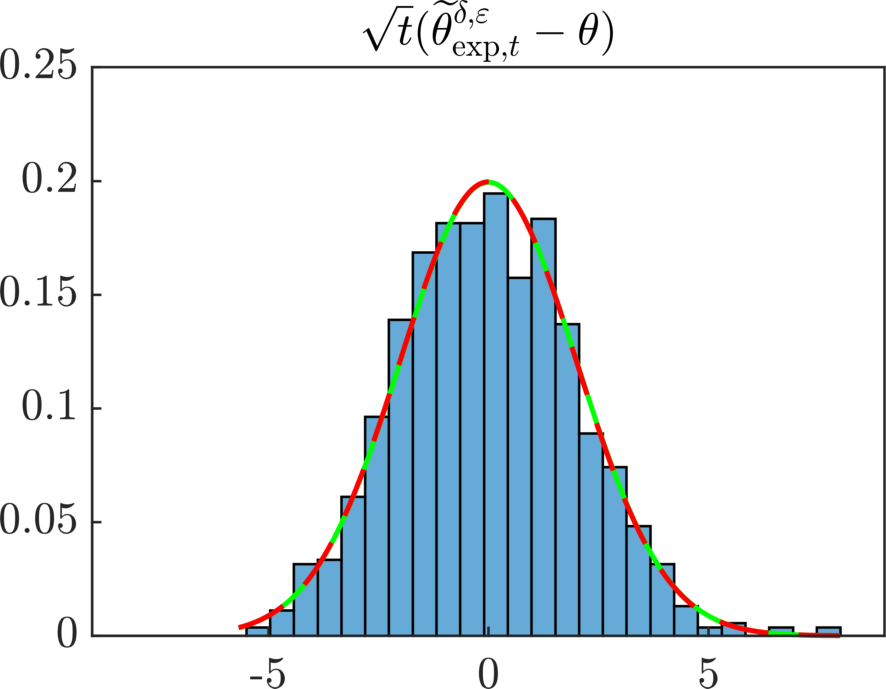}
\caption{Histogram for the numerical study of the central limit theorem for the estimators $\widehat \theta^\delta_{\mathrm{exp}}(X^\epl,T)$ (left) and $\widetilde \theta^{\delta,\epl}_{\mathrm{exp},t}$ (right) in the linear case. The red dashed lines represent the fitted Gaussian distributions, and the green lines represent the asymptotic distribution predicted by the analysis.}
\label{fig:CLT}
\end{figure}

A natural question is whether, in addition to the asymptotic unbiasedness of the estimators, we can also prove a central limit theorem (CLT) and therefore asymptotic normality. This was done for single-scale SDEs for the SGDCT in \cite{SiS20} and it is a classical result for the MLE (see, e.g., \cite{RaR81}). We mention, in particular, that the results from \cite{SiS20} do not directly apply to our problem, since we need to study the asymptotic behaviour of our estimators with respect to both time and $\epl$. We also remark that ideas to prove a CLT for the MLE for multiscale diffusions are sketched in \cite[Section 4.5]{Zan22}. This is a very interesting and challenging problem that we will leave for future study. Nevertheless, in this section we present some formal calculations and numerical experiments that shed some light into how time and the parameter $\epl$ measuring the correlation of the colored noise should scale in order to be able to prove a CLT for the modified SGDCT and MLE estimators. We mention that a similar asymptotic analysis with respect to two parameters is presented in \cite{PaZ22a} for the eigenfunction estimator for interacting particle systems, where one needs to study carefully the asymptotic normality of the estimator with respect to the number of particles and the number of observations of a single particle.

Let us focus for simplicity on the setting of \cref{ex:OU} and set the parameters $\theta = G = A = \sigma = 1$. Let us then consider the estimators $\widehat \theta^\delta_{\mathrm{exp}}(X^\epl,T)$ and $\widetilde \theta^{\delta,\epl}_{\mathrm{exp},t}$ with filtered data. We aim to analyze the convergence of the quantity
\begin{equation}
\sqrt T \left( \widehat \theta^\delta_{\mathrm{exp}}(X^\epl,T) - \theta \right)
\end{equation}
for the MLE and similarly for the SGDCT estimator. Consider the decomposition
\begin{equation} \label{eq:decomposition_CLT_MLE}
\begin{aligned} 
\sqrt T \left( \widehat \theta^\delta_{\mathrm{exp}}(X^\epl,T) - \theta \right) &= \sqrt T \left( \widehat \theta^\delta_{\mathrm{exp}}(X^\epl,T) - \theta - \frac{\E[Z^\epl \frac{Y^\epl}{\epl}]}{\E[Z^\epl X^\epl]} \right) \\
&\quad + \sqrt T \frac{\E[Z^\epl \frac{Y^\epl}{\epl}]}{\E[Z^\epl X^\epl]}, \\
&\eqdef \sqrt T \mathcal Q_T^\epl + \sqrt T \mathcal E^\epl,
\end{aligned}
\end{equation}
and notice that $\mathcal Q_T^\epl \to 0$ as $T \to \infty$ and $\mathcal E^\epl \to 0$ as $\epl \to 0$ due to the proof of \cref{thm:exp_ok_eps}. We therefore need to set $T = \epl^{-\lambda}$ with $\lambda > 0$ and study the two limits simultaneously. In particular, we expect the first part to give the main contribution, i.e., the limit Gaussian distribution, and the second part to vanish. We remark that in this setting all the processes $X_t^\epl, Y_t^\epl, Z_t^\epl$ and $X_t, Z_t$ are Gaussian, so we can perform the computations explicitly. Hence, we have
\begin{equation}
\E \left[Z^\epl \frac{Y^\epl}{\epl} \right] = \lim_{t\to\infty} \frac1{\delta \epl^2} \int_0^t \int_0^s e^{-\frac{t-s}\delta} e^{-(s-r)} \E[Y_r^\epl Y_t^\epl] \dd r \dd s = \frac{\epl^2}{2(1 + \epl^2)(\delta + \epl^2)},
\end{equation}
and
\begin{equation} \label{eq:EZX}
\lim_{\epl\to0} \E[Z^\epl X^\epl] = \E[Z X] = \lim_{t\to\infty} \frac1\delta \int_0^t e^{-\frac{t-s}\delta} \E[X_s X_t] \dd s = \frac1{2(1 + \delta)}.
\end{equation}
Therefore, if $\lambda < 4$ we deduce that
\begin{equation}
\lim_{\epl \to 0} \sqrt T \mathcal E^\epl = \lim_{\epl \to 0} \epl^{-\lambda/2} \mathcal E^\epl = \frac{(1 + \delta) \epl^{2 - \lambda/2}}{(1 + \epl^2)(\delta + \epl^2)} = 0.
\end{equation}
Let us now consider the term $\mathcal Q^\epl_T$, which, by the proof of \cref{thm:exp_ok_eps}, can be rewritten as
\begin{equation}
\mathcal Q_T^\epl = \frac{\int_0^T Z^\epl_t \frac{Y_t^\epl}{\epl} \d t}{\int_0^T Z^\epl_t X^\epl_t \d t} - \frac{\E[Z^\epl \frac{Y^\epl}{\epl}]}{\E[Z^\epl X^\epl]} = \frac1{\int_0^T Z^\epl_t X^\epl_t \d t} \int_0^T \left( Z^\epl_t \frac{Y_t^\epl}{\epl} - Z^\epl_t X^\epl_t \frac{\E[Z^\epl \frac{Y^\epl}{\epl}]}{\E[Z^\epl X^\epl]} \right) \dd t,
\end{equation}
and denote by $\omega^\epl = \omega^\epl(x,y,z)$ the solution of the Poisson problem for the generator of the joint system $(X_t^\epl,Y_t^\epl,Z_t^\epl)$ with right-hand side
\begin{equation}
\upsilon^\epl(x,y,z) = z \frac{y}{\epl} - z x \frac{\E[Z^\epl \frac{Y^\epl}{\epl}]}{\E[Z^\epl X^\epl]}.
\end{equation}
Applying the Itô formula to the process $\Xi_t^\epl = \omega^\epl(X_t^\epl, Y_t^\epl, Z_t^\epl)$ we get
\begin{equation}
\mathcal Q_T^\epl = \frac1{\int_0^T Z^\epl_t X^\epl_t \d t} \left( \omega^\epl(X_T^\epl, Y_T^\epl, Z_T^\epl) - \omega^\epl(X_0^\epl, Y_0^\epl, Z_0^\epl) - \frac1\epl \int_0^T \partial_y \omega^\epl(X_t^\epl, Y_t^\epl, Z_t^\epl) \dd W_t \right),
\end{equation}
which, due to the ergodic theorem and the martingale central limit theorem, formally implies
\begin{equation}
\lim_{\epl \to 0} \sqrt T \mathcal Q_T^\epl = \lim_{\epl \to 0} \epl^{-\lambda/2} \mathcal Q_{\epl^{-\lambda/2}}^\epl = \mathcal N(0, \Lambda), \qquad \text{in law},
\end{equation}
where by the limit \eqref{eq:EZX}
\begin{equation}
\Lambda = \lim_{\epl \to 0} \frac{\E \left[ (\partial_y \omega^\epl(X^\epl, Y^\epl, Z^\epl))^2 \right]}{\epl^2 \E[Z^\epl X^\epl]^2} = \lim_{\epl \to 0} \frac{4(1 + \delta)^2 \E \left[ (\partial_y \omega^\epl(X^\epl, Y^\epl, Z^\epl))^2 \right]}{\epl^2}.
\end{equation}
The next step consists in showing that this limit actually exists and is finite. We notice that the solution $\omega^\epsilon$ of the Poisson problem can be computed analytically and is given by
\begin{equation}
\omega^\epl(x,y,z) = \frac\epl{\delta + (1 + \delta)\epl^2} \left( \frac\epl2 x^2 + \delta\epl xz - \delta yz \right),
\end{equation}
which implies
\begin{equation}
\partial_y \omega(x,y,z) = - \frac{\delta\epl}{\delta + (1 + \delta)\epl^2}z.
\end{equation}
Therefore, since 
\begin{equation} \label{eq:EZ2}
\E[Z^2] = \lim_{t\to\infty} \frac1{\delta^2} \int_0^t \int_0^t e^{-\frac{t-r}\delta} e^{-\frac{t-s}\delta} \E[X_r X_s] \dd r \dd s = \frac1{2(1 + \delta)},
\end{equation}
we obtain
\begin{equation} 
\Lambda = \lim_{\epl\to0} \frac{4(1 + \delta)^2 \delta^2}{(\delta + (1 + \delta)\epl^2)^2} \E[(Z^\epl)^2] = 4(1 + \delta)^2 \E[Z^2] = 2(1 + \delta),
\end{equation}
which is the asymptotic variance of the MLE. This result implies that the filtering width should be chosen as small as possible in the asymptotic regime. However, in concrete applications with $\epl$ small, but finite, $\delta$ must be sufficiently large with respect to $\epl$ in order for the filtering procedure to be effective. 

On the other hand, a different argument should be used to derive the limit distribution for the SGDCT estimator. Even if the term $\mathcal E^\epl$ can be studied analogously, the quantity $\mathcal Q^\epl_T$ should be analyzed separately. In fact, we believe that this analysis can be inspired by \cite{SiS20}, but must differ due to the absence of a clear objective function in our context. In the simplified setting of this section we can take advantage of the explicit expression of the SGDCT estimator given in \eqref{eq:SGDCT_1D_explicit}, which reads
\begin{equation}
\widetilde \theta_{\mathrm{exp},t}^{\delta,\epl} = \theta + (\theta_0 - \theta) e^{-\int_0^t \xi_r X_r^\epl Z_r^\epl \dd r} + \int_0^t \xi_s e^{-\int_s^t \xi_r X_r^\epl Z_r^\epl \dd r} Z_s^\epl \frac{Y_s^\epl}{\epl} \dd s.
\end{equation}
We have the following decomposition
\begin{equation}
\sqrt t \left( \widetilde \theta_{\mathrm{exp},t}^{\delta,\epl} - \theta \right) = \sqrt t \left( \mathcal Q_t^{\epl,1} + \mathcal Q_t^{\epl,2} + \mathcal Q_t^{\epl,3} + \mathcal Q_t^{\epl,4} + \mathcal Q_t^{\epl,5} + \mathcal E^\epl \right),
\end{equation}
where $\mathcal E^\epl$ is defined as in \eqref{eq:decomposition_CLT_MLE} and
\begin{equation}
\begin{aligned}
\mathcal Q_t^{\epl,1} &= (\theta_0 - \theta) \left( e^{-\int_0^t \xi_r X_r^\epl Z_r^\epl \dd r} - e^{-\int_0^t \xi_r \dd r \E [X^\epl Z^\epl]} \right), \\
\mathcal Q_t^{\epl,2} &= (\theta_0 - \theta) e^{-\int_0^t \xi_r \dd r \E [X^\epl Z^\epl]}, \\
\mathcal Q_t^{\epl,3} &= \int_0^t \xi_s e^{-\int_s^t \xi_r X_r^\epl Z_r^\epl \dd r} Z_s^\epl \frac{Y_s^\epl}{\epl} \dd s - \int_0^t \xi_s e^{-\int_s^t \xi_r \dd r \E [X^\epl Z^\epl]} Z_s^\epl \frac{Y_s^\epl}{\epl} \dd s, \\
\mathcal Q_t^{\epl,4} &= \int_0^t \xi_s e^{-\int_s^t \xi_r \dd r \E [X^\epl Z^\epl]} Z_s^\epl \frac{Y_s^\epl}{\epl} \dd s - \int_0^t \xi_s e^{-\int_s^t \xi_r \dd r \E [X^\epl Z^\epl]} \dd s \E \left[ Z^\epl \frac{Y^\epl}{\epl} \right], \\
\mathcal Q_t^{\epl,5} &= \int_0^t \xi_s e^{-\int_s^t \xi_r \dd r \E [X^\epl Z^\epl]} \dd s \E \left[ Z^\epl \frac{Y^\epl}{\epl} \right] - \frac{\E[Z^\epl \frac{Y^\epl}{\epl}]}{\E[Z^\epl X^\epl]}.
\end{aligned}
\end{equation}
Proceeding analogously as for the MLE, we set $t = \epl^{-\lambda}$ with $0 < \lambda < 4$ and deduce that $\sqrt t \mathcal E^\epl \to 0$ as $\epl \to 0$. We then expect the terms $\mathcal Q_t^{\epl,1}, \mathcal Q_t^{\epl,2}, \mathcal Q_t^{\epl,3}, \mathcal Q_t^{\epl,5}$ to vanish faster than $1/\sqrt t$, and therefore not to contribute to the limit distribution, and we focus on $\mathcal Q_t^{\epl,4}$. First, notice that
\begin{equation}
\xi_s e^{-\int_s^t \xi_r \dd r \E [X^\epl Z^\epl]} = \frac{a (b + s)^{a \E [X^\epl Z^\epl] - 1}}{(b + t)^{a\E [X^\epl Z^\epl]}},
\end{equation}
which implies
\begin{equation}
\mathcal Q_t^{\epl,4} = \frac{a}{(b + t)^{a\E [X^\epl Z^\epl]}} \int_0^t (b + s)^{a \E [X^\epl Z^\epl] - 1} \left( Z_s^\epl \frac{Y_s^\epl}{\epl} - \E \left[ Z^\epl \frac{Y^\epl}{\epl} \right] \right) \dd s.
\end{equation}
Let us now denote by $\widetilde \omega^\epl = \widetilde \omega^\epl(x,y,z)$ the solution of the Poisson problem for the generator of the joint system $(X_t^\epl,Y_t^\epl,Z_t^\epl)$ with right-hand side
\begin{equation}
\widetilde \upsilon^\epl(x,y,z) = z \frac{y}{\epl} - \E \left[ Z^\epl \frac{Y^\epl}{\epl} \right],
\end{equation}
which is given by
\begin{equation} \label{eq:solution_Poisson_tilde_CLT}
\widetilde \omega^\epl(x,y,z) = - \frac{\epl}{\delta + \epl^2} \left( \frac{\epl^3}{2(1 + \epl^2)} y^2 + \frac{\epl^2}{1 + \epl^2} xy + \delta yz \right).
\end{equation}
Applying the Itô formula to the process $\widetilde \Xi_s^\epl = (b + s)^{a \E [X^\epl Z^\epl] - 1} \widetilde \omega^\epl(X_s^\epl, Y_s^\epl, Z_s^\epl)$ we get
\begin{equation}
\begin{aligned}
\mathcal Q_t^{\epl,4} &= - \frac{a}{\epl (b + t)^{a \E [X^\epl Z^\epl]}} \int_0^t (b + s)^{a \E [X^\epl Z^\epl] - 1} \partial_y \widetilde \omega^\epl(X_s^\epl, Y_s^\epl, Z_s^\epl) \dd W_s \\
&\quad - \frac{a(a \E [X^\epl Z^\epl] - 1)}{(b + t)^{a \E [X^\epl Z^\epl]}} \int_0^t (b + s)^{a \E [X^\epl Z^\epl] - 2} \widetilde \omega^\epl(X_s^\epl, Y_s^\epl, Z_s^\epl) \dd s \\
&\quad + \frac{a}{(b + t)^{a\E [X^\epl Z^\epl]}} \left( (b + t)^{a \E [X^\epl Z^\epl] - 1} \widetilde \omega^\epl(X_t^\epl, Y_t^\epl, Z_t^\epl) - b^{a \E [X^\epl Z^\epl] - 1} \widetilde \omega^\epl(X_0^\epl, Y_0^\epl, Z_0^\epl) \right),
\end{aligned}
\end{equation}
which, due to the ergodic theorem and the martingale central limit theorem, formally implies
\begin{equation}
\lim_{\epl \to 0} \sqrt t \mathcal Q_t^{\epl,4} = \lim_{\epl \to 0} \epl^{-\lambda/2} \mathcal Q_{\epl^{-\lambda/2}}^{\epl,4} = \mathcal N(0, \widetilde \Lambda), \qquad \text{in law},
\end{equation}
where by the limit \eqref{eq:EZX}
\begin{equation}
\begin{aligned}
\widetilde \Lambda &= \lim_{\epl \to 0} \lim_{t \to \infty} \frac{a^2 t \E \left[ (\partial_y \widetilde \omega^\epl(X^\epl, Y^\epl, Z^\epl))^2 \right]}{\epl^2 (b + t)^{2a\E [X^\epl Z^\epl]}} \int_0^t (b + s)^{2a \E [X^\epl Z^\epl] - 2} \dd s \\
&= \frac{a^2 (1 + \delta)}{a-(1 + \delta)} \lim_{\epl \to 0} \frac1{\epl^2} \E \left[ (\partial_y \widetilde \omega^\epl(X^\epl, Y^\epl, Z^\epl))^2 \right].
\end{aligned}
\end{equation}
Then, from the expression \eqref{eq:solution_Poisson_tilde_CLT} we have
\begin{equation}
\partial_y \widetilde \omega(x,y,z) = - \frac{\epl}{\delta + \epl^2} \left( \frac{\epl^3}{1 + \epl^2} y + \frac{\epl^2}{1 + \epl^2} x + \delta z \right),
\end{equation}
and due to equation \eqref{eq:EZ2}, we obtain
\begin{equation}
\widetilde \Lambda = \frac{a^2 (1 + \delta)}{a-(1 + \delta)} \lim_{\epl \to 0} \frac{\delta^2}{(\delta + \epl^2)^2} \E[(Z^\epl)^2] = \frac{a^2 (1 + \delta)}{a - (1 + \delta)} \E[Z^2] = \frac{a^2}{2(a - (1 + \delta))}.
\end{equation}
We notice that minimizing the expression of $\widetilde \Lambda$ with respect to the parameter $a$ in the learning rate yields the optimal limit variance of the SGDCT estimator $\widetilde \Lambda = 2(1 + \delta)$, which is given by the choice $a = 2(1 + \delta)$ and coincides with the limit variance of the MLE. Moreover, we emphasize that $a$ must be chosen sufficiently large, i.e., $a > 1 + \delta$, in order for the CLT to hold, but this is not necessary for the convergence of the estimator, as already observed in \cite[Section 6]{SiS20} for single-scale SDEs. We remark that this formal computation is only the initial step towards a more rigorous analysis. In \cref{fig:CLT} we plot the histogram of the quantities 
\begin{equation}
\sqrt T \left( \widehat \theta^\delta_{\mathrm{exp}}(X^\epl,T) - \theta \right) \qquad \text{and} \qquad \sqrt t \left( \widetilde \theta^{\delta,\epl}_{\mathrm{exp},t} - \theta \right),
\end{equation}
with $1000$ realizations, and setting the correlation of the colored noise $\epl = 0.1$ and the final time $T = t = \epl^{-3} = 1000$. Moreover, we choose the filtering width $\delta = 1$ and the parameters in the learning rate $a = 4$, which is optimal as we computed above, and $b = 1$. We observe that the empirical distributions are nearly Gaussian, which demonstrates numerically the asymptotic normality of our estimators, and well approximated by the limit distributions predicted by the previous analysis. We finally recall that the limit variances are dependent on the filtering width $\delta$ and the parameter $a$ of the learning rate. Therefore, a thorough study of the asymptotic normality in the general framework would also be fundamental to give insight on the choice of these parameters.

\section{Conclusion} \label{sec:conclusion}

In this work, we considered SDEs driven by colored noise, modelled as Gaussian stationary processes with exponential autocorrelation function, i.e., a stationary Ornstein--Uhlenbeck process. In the limit as the correlation time of the noise goes to zero, the solution of the SDE converges to the solution of an SDE driven by white noise. We studied the problem of inferring unknown drift coefficients in the limit equation given continuous trajectories from the dynamics with colored noise, employing both MLE and SGDCT estimators. This is similar to the problem of inferring parameters in a coarse-grained SDE given observations of the slow variable in the full systems, a problem that has been extensively studied in previous works by our group~\cite{AGP21,APZ22,GaZ23, PaS07, PPS09, KKP15, KPK13}. We first focused on the case of additive noise and noticed that, without preprocessing the data, it is not possible to retrieve the exact drift coefficient, due to the incompatibility between the original observations and the limit equation, as first observed in \cite{PaS07} for multiscale diffusions. We overcame this issue by introducing filtered data, as in~\cite{AGP21}, obtained by convolving the original trajectory with an exponential kernel, in the definition of the estimators. We proved both theoretically and demonstrated through numerical experiments that the estimators developed in this paper are asymptotically unbiased, i.e., that they converge to the exact parameter values in joint limit as the observation time tends to infinity and the correlation time of the noise goes to zero.  Moreover, we applied our estimators to SDEs driven by multiplicative colored noise, for which an additional drift term, due to the Lévy area correction, appears in the limiting SDE. We showed that even in this case our methodology allows us to effectively infer the drift coefficients. We consider this to be an interesting result, since it is not clear at all that an MLE or SGDCT-based inference methodology can identify and learn the L\'{e}vy area correction. 

The results presented in this paper can be improved and extended in many different directions. First, the theoretical analysis for the SGDCT estimator is restricted to stochastic processes on a compact state space, i.e., the multidimensional torus. However, as suggested by the numerical examples, we believe that this restriction is not necessary, and that similar convergence results can be proved for colored noise-driven SDEs in unbounded domains. Since the focus of this paper was the development of the new inference methodologies, we chose to work on the torus in order to avoid technical issues related to the study of hypoelliptic PDEs in unbounded domains.

Second, in the study of the identifiability of the L\'{e}vy area correction we considered the particular example where both the drift functions in the colored multiplicative SDE are linear in the unknown coefficients. In this case simple analytical formulas for the L\'{e}vy area correction exist~\cite[Section 5.1]{Pav14} and it is straightforward to compare between theory and the results of numerical simulations. We believe, however, that the filtered data methodology can be employed in a much more general setting.  In particular, it would be interesting to infer drift functions in colored SDEs with multiplicative noise that depend nonlinearly on the parameters and, possibly, also in a nonparametric form. Furthermore, starting from \cref{sec:CLT} we would also like to obtain rigorous convergence rates and central limit theorems, and therefore asymptotic normality, both for the MLE and for the SGDCT estimators.

Lastly, in the present paper we considered the case of continuous, uncorrupted by noise, observations. It is important to extend our methodology so that it applies to the realistic case of discrete-in-time, noisy observations, both in the low and high frequency regimes. Naturally, the ultimate goal of this project is to apply our inference methodologies to real data. We plan to return to all these issues in future work. 

\subsection*{Acknowledgements} 

The authors would like to thank Markus Reiss for feedback on an earlier version of the paper. GP is partially supported by an ERC-EPSRC Frontier Research Guarantee through Grant No. EP/X038645, ERC Advanced Grant No. 247031, and a Leverhulme Trust Senior Research Fellowship, SRF{\textbackslash}R1{\textbackslash}241055. The work of SR has been partially funded by Deutsche Forschungsgemeinschaft (DFG) through the grant CRC 1114 \lq Scaling Cascades in Complex Systems\rq \,(project number 235221301). The work of AZ was partially supported by the Swiss National Science Foundation, under grant No. 200020\_172710.

\enlargethispage{0.2cm}
 
\bibliographystyle{siamnodash}
\bibliography{biblio}

\end{document}